\newtheorem{thm}{Theorem}[section]
\newtheorem{lem}[thm]{Lemma}
\newtheorem{pro}[thm]{Proposition}
\newtheorem{cor}[thm]{Corollary}
\numberwithin{equation}{section}
\newcommand{\ch}{\mathrm{char}}
\newcommand{\chelem}{\mathrm{ch}}
\newcommand{\der}{\mathrm{Der}}
\newcommand{\ad}{\mathrm{ad}}
\newcommand{\id}{\mathrm{id}}
\newcommand{\Hom}{\mathrm{Hom}}
\newcommand{\End}{\mathrm{End}}
\newcommand{\Ker}{\mathrm{Ker}}
\newcommand{\im}{\mathrm{Im}}
\newcommand{\coker}{\mathrm{Coker}}
\newcommand{\ann}{\mathrm{Ann}}
\newcommand{\rad}{\mathrm{Rad}}
\newcommand{\leib}{\mathrm{Leib}}
\newcommand{\lie}{\mathrm{Lie}}
\newcommand{\sym}{\mathrm{sym}}
\newcommand{\bi}{\mathrm{bi}}
\newcommand{\rel}{\mathrm{rel}}
\newcommand{\CCE}{\mathrm{C}}
\newcommand{\BCE}{\mathrm{B}}
\newcommand{\HCE}{\mathrm{H}}
\newcommand{\CL}{\mathrm{CL}}
\newcommand{\CR}{\mathrm{CR}}
\newcommand{\dl}{\mathrm{d}}
\newcommand{\HR}{\mathrm{HR}}
\newcommand{\HL}{\mathrm{HL}}
\newcommand{\cl}{\mathrm{\widetilde{CL}}}
\newcommand{\dd}{\widetilde{\mathrm{d}}}
\newcommand{\hl}{\mathrm{\widetilde{HL}}}
\newcommand{\GL}{\mathrm{GL}}
\newcommand{\N}{\mathbb{N}}
\newcommand{\F}{\mathbb{F}}
\newcommand{\C}{\mathbb{C}}
\newcommand{\lf}{\mathfrak{L}}
\newcommand{\If}{\mathfrak{I}}
\newcommand{\qf}{\mathfrak{Q}}
\newcommand{\gf}{\mathfrak{g}}
\newcommand{\kf}{\mathfrak{k}}
\newcommand{\gq}{\mathfrak{q}}
\newcommand{\slf}{\mathfrak{sl}}
\newcommand{\ssf}{\mathfrak{s}}
\newcommand{\hf}{\mathfrak{h}}
\newcommand{\nf}{\mathfrak{N}}
\newcommand{\af}{\mathfrak{a}}
\newcommand{\Af}{\mathfrak{A}}
\newcommand{\bbf}{\mathfrak{p}}
\newcommand{\W}{\mathcal{W}}
\begin{document}


\title[Leibniz cohomology]{On Leibniz cohomology}

\author{J\"org Feldvoss}
\address{Department of Mathematics and Statistics, University of South Alabama,
Mobile, AL 36688-0002, USA}
\email{jfeldvoss@southalabama.edu}

\author{Friedrich Wagemann}
\address{Laboratoire de math\'ematiques Jean Leray, UMR 6629 du CNRS, Universit\'e
de Nantes, 2, rue de la Houssini\`ere, F-44322 Nantes Cedex 3, France}
\email{wagemann@math.univ-nantes.fr}

\subjclass[2010]{Primary 17A32; Secondary 17B56}

\keywords{Leibniz cohomology, Chevalley-Eilenberg cohomology, spectral sequence,
cohomological vanishing, invariant symmetric bilinear form, Cartan-Koszul map, complete
Lie algebra, rigid Leibniz algebra, Witt algebra, Borel subalgebra, parabolic subalgebra,
semi-simple Leibniz algebra, second Whitehead lemma, outer derivation}


\begin{abstract}
In this paper we prove the Leibniz analogue of Whitehead's vanishing theorem for
the Chevalley-Eilenberg cohomology of Lie algebras. As a consequence, we obtain
the second Whitehead lemma for Leibniz algebras. Moreover, we compute the
cohomology of several Leibniz algebras with adjoint or irreducible coefficients. Our
main tool is a Leibniz analogue of the Hochschild-Serre spectral sequence, which is
an extension of the dual of a spectral sequence of Pirashvili for Leibniz homology
from symmetric bimodules to arbitrary bimodules.
\end{abstract}


\date{November 3, 2020}
          
\maketitle


\section*{Introduction}


In \cite{ALO}, the authors study the cohomology of semi-simple Leibniz algebras,
i.e., the cohomology of finite-dimensional Leibniz algebras $\lf$ with an ideal of
squares $\leib(\lf)$ such that the corresponding canonical Lie algebra $\lf_\lie:=
\lf/\leib(\lf)$ is semi-simple, and conjecture that $\HL^2(\lf,\lf_\ad)=0$. In \cite{FMM}, 
the authors determine the deviation of the second Leibniz cohomology of a complex
Lie algebra with adjoint or trivial coefficients from the corresponding Chevalley-Eilenberg
cohomology. With these motivations in mind, we systematically transpose Pirashvili's
results and tools from homology (see \cite{P}) to cohomology, generalize the dual
of one of Pirashvili's spectral sequences from symmetric bimodules to arbitrary
bimodules, and prove the conjecture mentioned above.

Obtaining this kind of vanishing results would be easy with a strong analogue of the
Hochschild-Serre spectral sequence for Leibniz cohomology. Recall that the Hochschild-Serre
spectral sequence for a Lie algebra extension $0\to\kf\to\gf\to\gq\to 0$ arises from
a filtration of the standard cochain complex of $\gf$ by cochains which vanish in case
one inserts for a certain fixed number $q$ elements of the ideal $\kf$ in $q$ arguments
of the cochain (see \cite[Sections 2 and 3]{HS}). When trying to generalize this filtration
from Lie algebras to Leibniz algebras, one needs to choose whether to filter from the left
or from the right. Another difficulty is that the arising spectral sequence does not converge
to the cohomology of the Leibniz algebra, but rather to the cohomology of some quotient
complex. Furthermore, one must impose that the ideal acts trivially from the left (right) on the
left (right) Leibniz algebra. This last issue excludes the application of the spectral sequence
to many interesting ideals in the Leibniz algebra. Pirashvili \cite[Theorem C]{P} has constructed
an analogue of the Hochschild-Serre spectral sequence using the filtration from the right
for right Leibniz algebras and indicated how to use it together with a long exact sequence
in order to extract cohomology. We use Pirashvili's framework and extend the dual of
his spectral sequence from symmetric bimodules to arbitrary bimodules (see Theorem~\ref{hs1}).
The two main changes of perspective with respect to \cite{P} are the systematic use
of arbitrary bimodules and computations in which we consider ground fields of all
characteristics. We hope that this might be useful for further applications in the future.

The main application of Theorem \ref{hs1} is Theorem \ref{vansemsim} in which we
compute the cohomology of a finite-dimensional semi-simple Leibniz algebra over a
field of characteristic zero with coefficients in an arbitrary finite-dimensional bimodule.
The case $n=2$ of Theorem \ref{vansemsim} is the second Whitehead lemma for
Leibniz algebras. But note that contrary to Chevalley-Eilenberg cohomology, Leibniz
cohomology vanishes in any degree $n\ge 2$. This is one of several instances that
we found by our computations in this paper which indicates that Leibniz cohomology
behaves more uniformly than Chevalley-Eilenberg cohomology. We also show by
examples that the theorem fails in prime characteristic or for infinite-dimensional
modules (see Examples E and F, respectively).

As an immediate consequence of Theorem \ref{vansemsim} we obtain the rigidity
of finite-dimensional semi-simple Leibniz algebras in characteristic zero (see Corollary
\ref{rigid}). More generally, we obtain a complete description of the cohomology
of a finite-dimensional semi-simple left Leibniz algebra with coefficients in the adjoint
bimodule and its (anti-)symmetric counterparts (see Theorem \ref{adj}). In particular,
we deduce that a finite-dimensional semi-simple non-Lie Leibniz algebra in characteristic
zero always possesses outer derivations (see Corollary \ref{outder}) which might
be somewhat surprising as this shows that derivations of non-Lie Leibniz algebras are
more complicated than derivations of Lie algebras.

In addition to the results just mentioned, we dualize another spectral sequence
obtained by Pirashvili for Leibniz homology (see \cite[Theorem A]{P}) that relates
the Leibniz cohomology of a Lie algebra to its Chevalley-Eilenberg cohomology (see
Theorems \ref{theorem_A} and \ref{vanlie}). As an application we generalize some
known results on rigidity to complete Lie algebras (see Corollary \ref{complet} and
Corollary \ref{witt}) and to parabolic subalgebras of finite-dimensional semi-simple
Lie algebras (see Proposition~\ref{borel}). Moreover, we compute the Leibniz
cohomology for the non-abelian two-dimensional Lie algebra (see Example A) and
the three-dimensional Heisenberg algebra (see Example B) with coefficients in
irreducible Leibniz bimodules. The authors believe that Leibniz cohomology is an
important invariant of a Lie algebra that behaves more uniformly than Chevalley-Eilenberg
cohomology. The motivation for including so many details in Section \ref{cel} was
to provide the reader with a solid foundation for computing this invariant in arbitrary
characteristics.

The subject of Leibniz algebras, and especially its (co)homology theory, owes a
great deal to Jean-Louis Loday and Teimuraz Pirashvili (see \cite{CH}, \cite{L},
\cite{LP}, \cite{P}, and \cite{LP2}). Many fundamental definitions and tools are
due to them. While Loday and Pirashvili work with right Leibniz algebras, we work
with left Leibniz algebras. Obviously, results for left Leibniz algebras are equivalent
to the corresponding results for right Leibniz algebras. For the convenience of the
reader we shall indicate where to find the corresponding formulae for left Leibniz
algebras, even when they have been invented in the framework of right Leibniz
algebras and are due to Loday and Pirashvili.

Teimuraz Pirashvili spotted an error in a first version of this article (see \cite{P1}) which
we have subsequently corrected. The error is related to another Hochschild-Serre type
spectral sequence (see Remark (c) after Corollary \ref{pirashvili}) which we have
removed from the present version because its $E_2$-term is more involved than
we originally thought. 

In this paper we will follow the notation used in \cite{F}. All tensor products are over the
relevant ground field and will be denoted by $\otimes$. For a subset $X$ of a vector space
$V$ over a field $\F$ we let $\langle X\rangle_\F$ be the subspace of $V$ spanned by $X$.
We will denote the space of linear transformations from an $\F$-vector space $V$ to an
$\F$-vector space $W$ by $\Hom_\F(V,W)$. In particular, $V^*:=\Hom_\F(V,\F)$ will be
the space of linear forms on a vector space $V$ over a field $\F$. Moreover, $S^2(V)$ will
denote the symmetric square of a vector space $V$. Finally, the identity function on a set
$X$ will be denoted by $\id_X$, and the set $\{0,1,2,\dots\}$ of non-negative integers will
be denoted by $\N_0$.


\section{Preliminaries}\label{prelim}


In this section we recall some definitions and collect several results that will be useful in the
remainder of the paper.

A {\em left Leibniz algebra\/} is an algebra $\lf$ such that every left multiplication operator
$L_x:\lf\to\lf$, $y\mapsto xy$ is a derivation. This is equivalent to the identity
\begin{equation}\label{LLI}
x(yz)=(xy)z+y(xz)
\end{equation}
for all $x,y,z\in\lf$, which in turn is equivalent to the identity
\begin{equation}\label{RLLI}
(xy)z=x(yz)-y(xz)
\end{equation}
for all $x,y,z\in\lf$. We will call both identities the {\em left Leibniz identity\/}. There is a similar
definition of a {\em right Leibniz algebra\/} but  in this paper we will only consider left Leibniz
algebras.

Every left Leibniz algebra has an important ideal, its Leibniz kernel, that measures how much
the Leibniz algebra deviates from being a Lie algebra. Namely, let $\lf$ be a left Leibniz algebra
over a field $\F$. Then $$\leib(\lf):=\langle x^2\mid x\in\lf\rangle_\mathbb{F}$$ is called the
{\em Leibniz kernel\/} of $\lf$. The Leibniz kernel $\leib(\lf)$ is an abelian ideal of $\lf$, and
$\leib(\lf)\ne\lf$ when $\lf\ne 0$ (see \cite[Proposition 2.20]{F}). Moreover, $\lf$ is a Lie
algebra if, and only if, $\leib(\lf)=0$. It follows from the left Leibniz identity (\ref{RLLI})
that $\leib(\lf)\subseteq C_\ell(\lf)$, where $C_\ell(\lf):=\{c\in\lf\mid\forall\,x\in\lf:cx=0\}$
denotes the {\em left center\/} of $\lf$. 

By definition of the Leibniz kernel, $\lf_\lie:=\lf/\leib(\lf)$ is a Lie algebra which we call the
{\em canonical Lie algebra\/} associated to $\lf$. In fact, the Leibniz kernel is the smallest
ideal such that the corresponding factor algebra is a Lie algebra (see \cite[Proposition 2.22]{F}).

Next, we will briefly discuss left modules and bimodules of left Leibniz algebras. Let $\lf$ be a
left Leibniz algebra over a field $\F$. A {\em left $\lf$-module\/} is a vector space $M$ over
$\F$ with an $\F$-bilinear left $\lf$-action $\lf\times M\to M$, $(x,m)\mapsto x\cdot m$ such
that
\begin{equation}\label{LLM}
(xy)\cdot m=x\cdot(y\cdot m)-y\cdot(x\cdot m)
\end{equation}
is satisfied for every $m\in M$ and all $x,y\in\lf$.

By virtue of \cite[Lemma 3.3]{F}, every left $\lf$-module is an $\lf_\lie$-module, and vice
versa. Therefore left Leibniz modules are sometimes called Lie modules. Consequently, many
properties of left Leibniz modules follow from the corresponding properties of modules for
the canonical Lie algebra.

The correct concept of a module for a left Leibniz algebra $\lf$ is the notion of a Leibniz
bimodule. An {\em $\lf$-bimodule\/} is a left $\lf$-module $M$ with an $\F$-bilinear right
$\lf$-action $M\times\lf\to M$, $(m,x)\mapsto m\cdot x$ such that
\begin{equation}\label{LML}
(x\cdot m)\cdot y=x\cdot(m\cdot y)-m\cdot(xy)
\end{equation}
and
\begin{equation}\label{MLL}
(m\cdot x)\cdot y=m\cdot(xy)-x\cdot(m\cdot y)
\end{equation}
are satisfied for every $m\in M$ and all $x,y\in\lf$. In fact, all three identities (\ref{LLM}),
(\ref{LML}), and (\ref{MLL}) are instances of the left Leibniz identity, written down for
the left Leibniz algebra $\lf\oplus M$ which is considered as an abelian extension in the
theory of non-associative algebras, where the element $m$ coccurs on the right, in the
middle, or on the left, respectively (for the details see \cite[Section 3]{F}).  

The usual definitions of the notions of {\em sub-(bi)module\/}, {\em irreducibility\/},
{\em complete reducibility\/}, {\em composition series\/}, {\em homomorphism\/},
{\em isomorphism\/}, etc., hold for left Leibniz modules and Leibniz bimodules.

Let $\lf$ be a left Leibniz algebra over a field $\F$, and let $M$ be an $\lf$-bimodule. Then
$M$ is said  to be {\em symmetric\/} if $m\cdot x=-x\cdot m$ for every $x\in\lf$ and every
$m\in M$, and $M$ is said to be {\em anti-symmetric\/} if $m\cdot x=0$ for every $x\in\lf$
and every $m\in M$. We call $$M_0:=\langle x\cdot m+m\cdot x\mid x\in\lf,m\in M
\rangle_\F$$ the {\em anti-symmetric kernel\/} of $M$. It is known that $M_0$ is an
anti-symmetric $\lf$-sub-bimodule of $M$ (see \cite[Proposition 3.12]{F}) such that
$M_\sym:=M/M_0$ is symmetric (see \cite[Proposition 3.13]{F}).

Moreover, for any $\lf$-bimodule $M$ we will need its {\em space of right $\lf$-invariants\/}
$$M^\lf:=\{m\in M\mid\forall\,x\in\lf:m\cdot x=0\}$$ and the {\em annihilator\/} $$\ann_\lf^\bi
(M):=\{x\in\lf\mid\forall\,m\in M:x\cdot m=0=m\cdot x\}\,.$$

Our first result will be useful in the proof of Theorem \ref{whitehead}.

\begin{lem}\label{sym}
Let $\lf$ be a left Leibniz algebra, and let $M$ be an $\lf$-bimodule such that $M^\lf=0$.
Then $M$ is symmetric. In particular, $\leib(\lf)\subseteq\ann_\lf^\bi(M)$.
\end{lem}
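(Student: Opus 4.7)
The plan is to show directly that $M_0=0$, where $M_0=\langle x\cdot m+m\cdot x\mid x\in\lf,\,m\in M\rangle_\F$ is the anti-symmetric kernel; once this is established, symmetry of $M$ follows from the definition.

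The key observation is that adding the bimodule identities (\ref{LML}) and (\ref{MLL}) produces a cancellation. Indeed, (\ref{LML}) reads $(x\cdot m)\cdot y=x\cdot(m\cdot y)-m\cdot(xy)$ and (\ref{MLL}) reads $(m\cdot x)\cdot y=m\cdot(xy)-x\cdot(m\cdot y)$; summing these two, the right-hand sides annihilate each other, yielding
\[
(x\cdot m+m\cdot x)\cdot y=0
\]
for all $x,y\in\lf$ and $m\in M$. Hence every generator of $M_0$ lies in $M^\lf$. By hypothesis $M^\lf=0$, so $x\cdot m+m\cdot x=0$ for all $x\in\lf$, $m\in M$, which is precisely the assertion that $M$ is symmetric.

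For the supplementary claim $\leib(\lf)\subseteq\ann_\lf^\bi(M)$, I would exploit the fact that $\leib(\lf)$ is spanned by squares. Setting $y=x$ in the left module identity (\ref{LLM}) gives $x^2\cdot m=x\cdot(x\cdot m)-x\cdot(x\cdot m)=0$, so $\leib(\lf)$ acts trivially on $M$ from the left. Combining this with the symmetry just proved yields $m\cdot x=-x\cdot m=0$ for every $x\in\leib(\lf)$ and every $m\in M$, completing the proof.

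There is no real obstacle; the content of the lemma is essentially the observation that the two bimodule identities (\ref{LML}) and (\ref{MLL}) are designed to cancel when added, so the anti-symmetric kernel automatically lands in the space of right invariants.
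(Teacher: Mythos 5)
Your proof is correct and follows essentially the same route as the paper: the paper observes that the anti-symmetric kernel $M_0$ satisfies $M_0=M_0^\lf\subseteq M^\lf=0$ (citing \cite[Proposition 3.12]{F} for the anti-symmetry of $M_0$) and then invokes \cite[Lemma 3.10]{F} for the second claim, whereas you simply verify those two cited facts inline by adding (\ref{LML}) and (\ref{MLL}) and by setting $y=x$ in (\ref{LLM}). Both computations are correct, so your argument is a self-contained version of the paper's proof.
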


\begin{proof}
Since $M_0$ is anti-symmetric, it follows from the hypothesis that $$M_0=M_0^\lf\subseteq
M^\lf=0\,.$$ Hence we obtain from the definition of $M_0$ that $M$ is symmetric. The second
part is then an immediate consequence of \cite[Lemma 3.10]{F}.
\end{proof}

It is clear from the definition of $M^\lf$ that an $\lf$-bimodule $M$ is anti-symmetric if, and
only if, $M^\lf=M$. We will use Lemma \ref{sym} to show that the symmetry of non-trivial
irreducible Leibniz bimodules can also be characterized by the behavior of their spaces of right
invariants. As a preparation for this, we need to know that the latter space is a sub-bimodule.

\begin{lem}\label{inv}
Let $\lf$ be a left Leibniz algebra, and let $M$ be an $\lf$-bimodule. Then $M^\lf$ is a
sub-bimodule of $M$.
\end{lem}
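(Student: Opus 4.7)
The plan is straightforward: unpack what it means for a subset to be a sub-bimodule and apply the bimodule axiom (\ref{LML}). Explicitly, to show $M^\lf$ is a sub-bimodule I must verify that it is closed under both the left and right actions of $\lf$.

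Closure under the right action is immediate, since $m\cdot x=0$ by the very definition of $M^\lf$, and of course $0\in M^\lf$. So the only real content is closure under the left action. Fix $m\in M^\lf$ and $x\in\lf$; I need to check that $x\cdot m\in M^\lf$, that is, $(x\cdot m)\cdot y=0$ for every $y\in\lf$. Here I apply identity (\ref{LML}), which says
\[
(x\cdot m)\cdot y = x\cdot(m\cdot y)-m\cdot(xy).
\]
Both $m\cdot y$ and $m\cdot(xy)$ vanish because $m\in M^\lf$, so the right-hand side is $0$.

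There isn't really a main obstacle; the identity (\ref{LML}) is precisely the one whose right-hand side only involves right multiplications of $m$ (by $y$ and by $xy$), which is exactly what one needs in order to see that killing all right actions on $m$ forces $(x\cdot m)\cdot y$ to vanish. Note that the analogous identity (\ref{MLL}), namely $(m\cdot x)\cdot y=m\cdot(xy)-x\cdot(m\cdot y)$, provides a second, redundant verification: starting from $m\in M^\lf$ the left side is $0$ trivially, while the right side collapses to $0$ by the same reasoning. Once closure under both actions is established, $M^\lf$ is by definition a sub-bimodule of $M$, completing the proof.
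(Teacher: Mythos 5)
Your proof is correct and follows essentially the same route as the paper: the paper's one-line proof likewise derives closure under the left action from identity (\ref{LML}) and closure under the right action from (\ref{MLL}), which you rightly observe is in fact immediate from the definition of $M^\lf$.
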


\begin{proof}
It follows from (\ref{LML}) that $M^\lf$ is invariant under the left action on $M$, and it
follows from (\ref{MLL}) that $M^\lf$ is invariant under the right action on $M$.
\end{proof}
\vspace{-.3cm}

\noindent {\bf Remark.} More generally, the proof of Lemma \ref{inv} shows that $M^\If$ is
an $\lf$-sub-bimodule of $M$ for every left ideal $\If$ of $\lf$.
\vspace{.3cm}

Now we can characterize the symmetry of a non-trivial irreducible Leibniz bimodule by the
vanishing of its space of right invariants. In particular, for non-trivial irreducible Leibniz
bimodules we obtain the converse of Lemma \ref{sym}. (Recall that an irreducible bimodule
$M$ is a bimodule that has exactly two sub-bimodules, namely, $0$ and $M$. In particular,
an irreducible bimodule is by definition a non-zero vector space.)

\begin{cor}\label{irr}
Let $\lf$ be a left Leibniz algebra, and let $M$ be an irreducible $\lf$-bimodule. Then $M$ is
symmetric with non-trivial $\lf$-action if, and only if, $M^\lf=0$.
\end{cor}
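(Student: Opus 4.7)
The plan is to use the two preceding lemmas (Lemma \ref{inv} to get that $M^\lf$ is a sub-bimodule, and Lemma \ref{sym} to get symmetry from vanishing invariants) together with the irreducibility hypothesis, which forces $M^\lf$ to be either $0$ or $M$.

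For the forward direction, I would assume that $M$ is symmetric with non-trivial $\lf$-action. By Lemma \ref{inv}, $M^\lf$ is a sub-bimodule of $M$, so by irreducibility either $M^\lf=0$ (what we want) or $M^\lf=M$. In the latter case $m\cdot x=0$ for all $m\in M$ and $x\in\lf$; combined with the hypothesis that $M$ is symmetric, this gives $x\cdot m=-m\cdot x=0$ as well, so the $\lf$-action on $M$ is trivial, contradicting the assumption. Hence $M^\lf=0$.

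For the converse, I would start from $M^\lf=0$. Lemma \ref{sym} immediately gives that $M$ is symmetric. It remains to see that the $\lf$-action on $M$ is non-trivial: if it were trivial, then in particular $m\cdot x=0$ for every $m\in M$ and $x\in\lf$, so $M\subseteq M^\lf=0$, contradicting the fact that an irreducible bimodule is non-zero by definition.

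No real obstacle is anticipated here; the statement is essentially a packaging of Lemma \ref{sym} with its converse in the irreducible setting, and the only thing that needs care is keeping track of the role of the hypothesis ``non-trivial $\lf$-action,'' which is exactly what rules out the degenerate case $M^\lf=M$ (equivalently, the case in which $M$ would be simultaneously symmetric and anti-symmetric).
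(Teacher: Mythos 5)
Your argument is correct and follows the paper's proof essentially verbatim: Lemma \ref{inv} plus irreducibility give the dichotomy $M^\lf=0$ or $M^\lf=M$, the latter case is excluded for a symmetric bimodule with non-trivial action, and the converse is Lemma \ref{sym} together with the observation that an irreducible bimodule is non-zero. The only difference is that you spell out the two contradiction arguments that the paper leaves implicit.
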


\begin{proof}
Since $M$ is irreducible, we obtain from Lemma \ref{inv} that $M^\lf=0$ or $M^\lf=M$.
Suppose first that $M$ is symmetric with non-trivial $\lf$-action. Then we have that
$M^\lf=0$. On the other hand, the converse immediately follows from Lemma \ref{sym}.
\end{proof}
\vspace{-.3cm}

Recall that every left $\lf$-module $M$ of a left Leibniz algebra $\lf$ determines a unique
symmetric $\lf$-bimodule structure on $M$ by defining $m\cdot x:=-x\cdot m$ for every
element $m\in M$ and every element $x\in\lf$ (see \cite[Proposition 3.15\,(b)]{F}). We
will denote this symmetric $\lf$-bimodule by $M_s$. Similarly, every left $\lf$-module $M$
with trivial right action is an anti-symmetric $\lf$-bimodule (see \cite[Proposition 3.15\,(a)]{F}).
We will denote this module by $M_a$. Note that for any irreducible left $\lf$-module $M$
the $\lf$-bimodules $M_s$ and $M_a$ are irreducible, and every irreducible $\lf$-bimodule
arises in this way from an irreducible left $\lf$-module (see \cite[p.\ 415]{LP2}).

Similar to the boundary map in \cite{CH} for the homology of a right Leibniz algebra with
coefficients in a right module one can also introduce a coboundary map $\dd^\bullet$
for the cohomology of a left Leibniz algebra with coefficients in a left module as follows.

Let $\lf$ be a left Leibniz algebra over a field $\F$, and let $M$ be a left $\lf$-module.
For any non-negative integer $n$ set $\CL^n(\lf,M):=\Hom_\F(\lf^{\otimes n},M)$ and
consider the linear transformation $\dd^n:\CL^n(\lf,M)\to\CL^{n+1}(\lf,M)$ defined by
\begin{eqnarray*}
(\dd^nf)(x_1,\dots,x_{n+1}) & := & \sum_{i=1}^{n+1}(-1)^{i+1}x_i\cdot f(x_1,
\dots,\hat{x}_i,\dots,x_{n+1})\\
& + & \sum_{1\le i<j\le n+1}(-1)^if(x_1,\dots,\hat{x}_i,\dots,x_ix_j,\dots,x_{n+1})
\end{eqnarray*}
for any $f\in\CL^n(\lf,M)$ and all elements $x_1,\dots,x_{n+1}\in\lf$.  Note that in the
second sum of the coboundary map $\dd^n$ the term $x_ix_j$ appears in the $j$-th
position. (Note that the convention in Loday's book (see \cite[(10.6.2)]{CH}) is for the
homology of right Leibniz algebras, and it is different.) Moreover, here and in the remainder
of the paper we identify the tensor power $\lf^{\otimes n}$ with the corresponding Cartesian
power.

Now let $M$ be an $\lf$-bimodule and for any non-negative integer $n$ consider the linear
transformation $\dl^n:\CL^n(\lf,M)\to\CL^{n+1}(\lf,M)$ defined by
\begin{eqnarray*}
(\dl^nf)(x_1,\dots,x_{n+1}) & := & \sum_{i=1}^n(-1)^{i+1}x_i\cdot f(x_1,\dots,\hat{x}_i,
\dots,x_{n+1})\\
& + & (-1)^{n+1}f(x_1,\dots,x_n)\cdot x_{n+1}\\
& + & \sum_{1\le i<j\le n+1}(-1)^if(x_1,\dots,\hat{x}_i,\dots,x_ix_j,\dots,x_{n+1})
\end{eqnarray*}
for any $f\in\CL^n(\lf,M)$ and all elements $x_1,\dots,x_{n+1}\in\lf$, where as before
the term $x_ix_j$ appears in the $j$-th position.

It is proved in \cite[Lemma 1.3.1]{C} that $\CL^\bullet(\lf,M):=(\CL^n(\lf,M),\dl^n)_{n\in
\N_0}$is a cochain complex, i.e., $\dl^{n+1}\circ\dl^n=0$ for every non-negative integer
$n$. Of course, the original idea of defining Leibniz cohomology as the cohomology of such
a cochain complex for right Leibniz algebras is due to Loday and Pirashvili \cite[Section 1.8]{LP}.
Hence one can define the {\em cohomology of $\lf$ with coefficients in an $\lf$-bimodule\/}
$M$ by $$\HL^n(\lf,M):=\HCE^n(\CL^\bullet(\lf,M)):=\Ker(\dl^n)/\im(\dl^{n-1})$$ for every
non-negative integer $n$. (In this definition we use that $\dl^{-1}:=0$.)

If $M$ is a symmetric $\lf$-bimodule, then we have the identity $\dd^n=\dl^n$ for any
non-negative integer $n$. Namely,  
\begin{eqnarray*}
(\dd^nf)(x_1,\dots,x_{n+1}) & = & \sum_{i=1}^n(-1)^{i+1}x_i\cdot f(x_1,\dots,\hat{x}_i,
\dots,x_{n+1})\\
& + & (-1)^{n+2}x_{n+1}\cdot f(x_1,\dots,x_n)\\
& + & \sum_{1\le i<j\le n+1}(-1)^if(x_1,\dots,\hat{x}_i,\dots,x_ix_j,\dots,x_{n+1})\\
& = & (\dl^nf)(x_1,\dots,x_{n+1})
\end{eqnarray*}
for any $f\in\CL^n(\lf,M)$ and all elements $x_1,\dots,x_{n+1}\in\lf$. In particular, as
mentioned earlier, any left $\lf$-module $M$ can be turned into a symmetric $\lf$-module
$M_s$, and the fact that $\dl^\bullet$ is a coboundary map for $\CL^\bullet(\lf,M_s)$
shows that $\cl^\bullet(\lf,M):=(\CL^n(\lf,M),\dd^n)_{n\in\N_0}$ is a cochain complex,
i.e., $\dd^{n+1}\circ\dd^n=0$ for every non-negative integer $n$. Hence one can define
the {\em cohomology of $\lf$ with coefficients in a left $\lf$-module\/} $M$ by $$\hl^n
(\lf,M):=\HCE^n(\cl^\bullet(\lf,M)):=\Ker(\dd^n)/\im(\dd^{n-1})$$ for every non-negative
integer $n$. (As in the previous definition we set $\dd^{-1}:=0$.)

Now we are ready to state the next result (see \cite[Lemma 2.2]{P} for the analogous
result in Leibniz homology) whose second part generalizes \cite[Corollary~4.4\,(b)]{F}
to arbitrary degrees and which will be crucial in Section \ref{semsim}. (Note that the
second part has already been obtained in \cite[Proposition 1.3.16]{C}). For the convenience
of the reader we include a detailed proof.

\begin{lem}\label{antisym}
Let $\lf$ be a left Leibniz algebra over a field $\F$, and let $M$ be a left $\lf$-module. Then
the following statements hold:
\begin{enumerate}
\item[{\rm (a)}] If $M$ is considered as a symmetric $\lf$-bimodule $M_s$, then $$\HL^n
                          (\lf,M_s)=\hl^n(\lf,M)$$ for every integer $n\ge 0$.
\item[{\rm (b)}] If $M$ is considered as an anti-symmetric $\lf$-bimodule $M_a$, then
                          $$\HL^0(\lf,M_a)=M$$ and $$\HL^n(\lf,M_a)\cong\hl^{n-1}(\lf,\Hom_\F
                          (\lf,M))=\HL^{n-1}(\lf,\Hom_\F(\lf,M)_s)$$ for every integer $n\ge 1$,
                          where $\Hom_\F(\lf,M)$ is a left $\lf$-module via $$(x\cdot f)(y):=x\cdot
                          f(y)-f(xy)$$ for every $f\in\Hom_\F(\lf,M)$ and any elements $x,y\in\lf$.
\end{enumerate}
\end{lem}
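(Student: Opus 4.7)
The plan is to prove the two parts essentially independently. Part (a) is almost free from what has already been observed: the paragraph preceding the lemma shows that for a symmetric bimodule $M_s$ the formulas $\dd^n$ and $\dl^n$ coincide. Hence the cochain complexes $\cl^\bullet(\lf,M)$ and $\CL^\bullet(\lf,M_s)$ have the same cochain spaces and the same coboundary, so their cohomology groups agree in every degree.

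For part (b), the case $n=0$ is immediate: since $\lf$ acts trivially on $M_a$ from the right, the formula for $\dl^0$ reduces to $(\dl^0 m)(x)=-m\cdot x=0$, so $\HL^0(\lf,M_a)=\Ker(\dl^0)=M$. For $n\ge 1$ I would exhibit an explicit cochain isomorphism via currying. Concretely, define
$$\Psi^n:\CL^{n+1}(\lf,M_a)\longrightarrow\CL^n(\lf,\Hom_\F(\lf,M))$$
by $(\Psi^n f)(x_1,\dots,x_n)(y):=f(x_1,\dots,x_n,y)$. Each $\Psi^n$ is manifestly a linear bijection, so it suffices to verify that $\Psi^{\bullet+1}\circ\dl^{\bullet+1}=\dd^\bullet\circ\Psi^\bullet$, and then part (b) follows (together with part (a) applied to the left $\lf$-module $\Hom_\F(\lf,M)$).

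The key calculation is to apply the formula for $\dl^{n+1}f$ with the last argument set equal to an auxiliary element $y$, i.e.\ with $x_{n+2}=y$. Two simplifications occur: the bimodule term $(-1)^{n+2}f(x_1,\dots,x_{n+1})\cdot y$ vanishes because $M_a$ is anti-symmetric, and the double sum splits into the part with $j\le n+1$, which feeds directly into the double sum for $\dd^n$, plus the part with $j=n+2$, which produces exactly the terms $(-1)^i(\Psi^nf)(\dots,\hat x_i,\dots)(x_iy)$. On the other side, expanding $(x_i\cdot g(\dots))(y)=x_i\cdot g(\dots)(y)-g(\dots)(x_iy)$ using the prescribed left $\lf$-action on $\Hom_\F(\lf,M)$ produces precisely those same correction terms with matching signs, cancelling the $j=n+2$ contribution from $\dl^{n+1}f$. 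After renaming $g=\Psi^nf$, both expressions agree term by term.

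The main obstacle is purely bookkeeping: one has to keep careful track of the signs and of where $x_ix_j$ ``appears'' (the $j$-th slot) when $j=n+2$, so that the induced term $f(\dots,\hat x_i,\dots,x_{n+1},x_iy)$ lands on the correct side of the identification. Once the signs line up correctly, the verification is routine, and combining the cochain isomorphism with part (a) yields the chain of identifications $\HL^n(\lf,M_a)\cong\hl^{n-1}(\lf,\Hom_\F(\lf,M))=\HL^{n-1}(\lf,\Hom_\F(\lf,M)_s)$ for all $n\ge 1$.
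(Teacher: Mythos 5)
Your proposal is correct and follows essentially the same route as the paper: part (a) from the coincidence $\dd^n=\dl^n$ on symmetric bimodules, and part (b) via the currying isomorphism on the last tensor slot together with the verification that it intertwines $\dl^\bullet$ with $\dd^\bullet$ (the anti-symmetry killing the right-action term, and the $j=n+2$ product terms being absorbed into the left action on $\Hom_\F(\lf,M)$), followed by an application of part (a). The only step you elide, which the paper carries out explicitly and which is needed for $\dd^\bullet$ on $\CL^\bullet(\lf,\Hom_\F(\lf,M))$ to be a coboundary at all, is the routine check that $(x\cdot f)(y):=x\cdot f(y)-f(xy)$ actually defines a left $\lf$-module structure on $\Hom_\F(\lf,M)$.
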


\begin{proof}
By virtue of the computation before Lemma \ref{antisym}, we only need to prove part (b).
Note that the first part of (b) is just \cite[Corollary 4.2\,(b)]{F}.

First, we show that $\Hom_\F(\lf,M)$ is a left $\lf$-module via the given action. Let $f\in
\Hom_\F(\lf,M)$ and $x,y,z\in\lf$ be arbitrary. Then we obtain from the defining identity
of a left Leibniz module (\ref{LLM}) and the left Leibniz identity (\ref{RLLI}) that
\begin{eqnarray*}
((xy)\cdot f)(z) & = & (xy)\cdot f(z)-f((xy)z)\\
& = & x\cdot(y\cdot f(z))-y\cdot(x\cdot f(z))-f(x(yz))+f(y(xz))\,,
\end{eqnarray*}
and
\begin{eqnarray*}
(x\cdot(y\cdot f))(z) & = & x\cdot(y\cdot f)(z)-(y\cdot f)(xz)\\
& = & x\cdot(y\cdot f(z))-x\cdot f(yz)-y\cdot f(xz)+f(y(xz))\,,
\end{eqnarray*}
as well as
\begin{eqnarray*}
(y\cdot(x\cdot f))(z) & = & y\cdot(x\cdot f)(z)-(x\cdot f)(yz)\\
& = & y\cdot(x\cdot f(z))-y\cdot f(xz)-x\cdot f(yz)+f(x(yz))\,.
\end{eqnarray*}
Hence $((xy)\cdot f)(z)=(x\cdot(y\cdot f)(z)-(y\cdot(x\cdot f)(z)$ for every $z\in\lf$, or
equivalently, $(xy)\cdot f=x\cdot(y\cdot f)-y\cdot(x\cdot f)$.

Now we will prove the second part of (b). Let $n$ be any positive integer. Consider the
linear transformations $\varphi^n:\CL^n(\lf,M)\to\CL^{n-1}(\lf,\Hom_\F(\lf,M))$ defined
by $\varphi^n(f)(x_1,\dots,x_{n-1})(x):=f(x_1,\dots,x_{n-1},x)$ for any elements $x_1,
\dots,x_{n-1},x\in\lf$ and $\psi^n:\CL^{n-1}(\lf,\Hom_\F(\lf,M))\to\CL^n(\lf,M)$ defined
by $\psi^n(g)(x_1,\dots,x_{n-1},x_n)$ $:=g(x_1,\dots,x_{n-1})(x_n)$ for any elements
$x_1,\dots,x_{n-1},x_n\in\lf$. Then $\varphi^n$ and $\psi^n$ are inverses of each other.

Next, we will show that $\dd^{n-1}\circ\varphi^n=\varphi^{n+1}\circ\dl^n$. Compute
\begin{eqnarray*}
(\dd^{n-1}\circ\varphi^n)(f)(x_1,\dots,x_n)(x) & = & \dd^{n-1}(\varphi^n(f))(x_1,\dots,x_n)(x)\\
& = & \sum_{i=1}^n(-1)^{i+1}(x_i\cdot\varphi^n(f))(x_1,\dots,\hat{x_i},\dots,x_n)(x)\\
& + & \sum_{1\le i<j\le n}(-1)^i\varphi^n(f)(x_1,\dots,\hat{x_i},\dots,x_ix_j,\dots,x_n)(x)\\
& = & \sum_{i=1}^n(-1)^{i+1}x_i\cdot\varphi^n(f)(x_1,\dots,\hat{x_i},\dots,x_n)(x)\\
& - & \sum_{i=1}^n(-1)^{i+1}\varphi^n(f)(x_1,\dots,\hat{x_i},\dots,x_n)(x_ix)\\
& + & \sum_{i\le i<j\le n}(-1)^if(x_1,\dots,\hat{x_i},\dots,x_ix_j,\dots,x_n,x)\\
& = & \sum_{i=1}^n(-1)^{i+1}x_i\cdot f(x_1,\dots,\hat{x_i},\dots,x_n,x)\\
& + & \sum_{i=1}^n(-1)^if(x_1,\dots,\hat{x_i},\dots,x_n,x_ix)\\
& + & \sum_{1\le i<j\le n}(-1)^if(x_1,\dots,\hat{x_i},\dots,x_ix_j,\dots,x_n,x)
\end{eqnarray*}
and
\begin{eqnarray*}
(\varphi^{n+1}\circ\dl^n)(f)(x_1,\dots,x_n)(x) & = & \varphi^{n+1}(\dl^n(f))(x_1,\dots,x_n)(x)\\
& = & \dl^n(f)(x_1,\dots,x_n,x)\\
& = & \sum_{i=1}^n(-1)^{i+1}x_i\cdot f(x_1,\dots,\hat{x_i},\dots,x_n,x)\\
& + & (-1)^{n+1}f(x_1,\dots,x_n)\cdot x\\
& + & \sum_{1\le i<j\le n}(-1)^if(x_1,\dots,\hat{x_i},\dots,x_ix_j,\dots,x_n,x)\\
& + & \sum_{i=1}^n(-1)^if(x_1,\dots,\hat{x_i},\dots,x_n,x_ix)
\end{eqnarray*}
for any elements $x_1,\dots,x_n,x\in\lf$. Since $M$ is anti-symmetric, the second of the
last four summands vanishes, and thus the two compositions are equal. From the identity
$\dd^{n-1}\circ\varphi^n=\varphi^{n+1}\circ\dl^n$ for every integer $n\ge 1$ we obtain
that $$\varphi^n(\Ker(\dl^n))\subseteq\Ker(\dd^{n-1})$$ and $$\varphi^n(\im(\dl^{n-1}))
\subseteq\im(\dd^{n-2})$$ for every integer $n\ge 1$. Hence $\varphi^n$ induces an
isomorphism of vector spaces between $\HL^n(\lf,M)$ and $\hl^{n-1}(\lf,\Hom_\F(\lf,M))$
for every integer $n\ge 1$. In order to see the remainder of the assertion, apply part (a).
\end{proof}

In the special case of the trivial one-dimensional Leibniz bimodule we obtain from Lemma
\ref{antisym} the following result which will be needed in Section \ref{semsim} (see
\cite[Exercise E.10.6.1]{CH} for the analogous result in Leibniz homology).

\begin{cor}\label{coadj}
Let $\lf$ be a left Leibniz algebra over a field $\F$. Then for every integer $n\ge 1$ there
are isomorphisms $$\HL^n(\lf,\F)\cong\hl^{n-1}(\lf,\lf^*)=\HL^{n-1}(\lf,(\lf^*)_s)$$ of
vector spaces, where $\lf^*$ is a left $\lf$-module via $(x\cdot f)(y):=-f(xy)$ for every
linear form $f\in\lf^*$ and any elements $x,y\in\lf$.
\end{cor}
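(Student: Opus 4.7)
The plan is to apply Lemma \ref{antisym}(b) directly with the choice of left $\lf$-module $M := \F$, where $\F$ carries the trivial left $\lf$-action. The key observation is that the trivial one-dimensional $\lf$-bimodule $\F$ coincides with $\F_a$, the anti-symmetric bimodule attached to the trivial left module, since both left and right actions on $\F$ vanish. Hence the hypothesis of Lemma \ref{antisym}(b) is satisfied and we may substitute $M_a = \F$ on the left-hand side of its conclusion.

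Substituting into Lemma \ref{antisym}(b), for every integer $n\ge 1$ we obtain
\[
\HL^n(\lf,\F) \;=\; \HL^n(\lf,\F_a) \;\cong\; \hl^{n-1}(\lf,\Hom_\F(\lf,\F)) \;=\; \HL^{n-1}(\lf,\Hom_\F(\lf,\F)_s),
\]
and the identification $\Hom_\F(\lf,\F) = \lf^*$ turns the right-hand side into the statement of the corollary. The second displayed equality in the corollary is exactly the general identity $\hl^{n-1}(\lf,N) = \HL^{n-1}(\lf,N_s)$ from Lemma \ref{antisym}(a) applied to $N = \lf^*$.

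It remains only to verify that the left $\lf$-module structure on $\lf^*$ supplied by Lemma \ref{antisym}(b) agrees with the one stated in Corollary \ref{coadj}. By that lemma, the action is $(x\cdot f)(y) := x\cdot f(y) - f(xy)$ for $f\in\Hom_\F(\lf,\F)$ and $x,y\in\lf$. Since the action of $\lf$ on $\F$ is trivial, $x\cdot f(y) = 0$, so the formula collapses to $(x\cdot f)(y) = -f(xy)$, which is precisely the coadjoint action indicated in the corollary.

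Since the argument is essentially an unpacking of Lemma \ref{antisym}(b) in a specific case, there is no real obstacle; the only thing worth being careful about is to verify that the trivial bimodule $\F$ really is the anti-symmetric bimodule $\F_a$ attached to the trivial left module (so that part (b) rather than part (a) is the right tool) and to check that the left action formula from the lemma reduces to the coadjoint formula $(x\cdot f)(y) = -f(xy)$ when the coefficient module is $\F$.
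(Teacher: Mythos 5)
Your proposal is correct and is exactly the paper's route: the paper derives Corollary \ref{coadj} by specializing Lemma \ref{antisym} to the trivial one-dimensional bimodule, which (as you note) is anti-symmetric, so part (b) applies and the module action on $\Hom_\F(\lf,\F)=\lf^*$ collapses to the coadjoint formula $(x\cdot f)(y)=-f(xy)$. Your additional care in checking that $\F=\F_a$ and that the action formula simplifies is precisely the unpacking the paper leaves implicit.
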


\noindent {\bf Remark.} Note that \cite[Theorem 3.5]{HPL} is an immediate consequence
of the case $n=2$ of Corollary \ref{coadj} and \cite[Corollary 4.4\,(a)]{F}.
\vspace{.2cm}


\section{A relation between Chevalley-Eilenberg cohomology and Leibniz cohomology for
Lie algebras}\label{cel}


Let $\gf$ be a Lie algebra, and let $M$ be a left $\gf$-module that is also viewed
as a symmetric Leibniz $\gf$-bimodule $M_s$. In this section, we will investigate
how the Chevalley-Eilenberg cohomology $\HCE^\bullet(\gf,M)$ and the Leibniz
cohomology $\HL^\bullet(\gf,M_s)$ are related. The tools  set forth in this section
have been developed by Pirashvili, and we follow the analogous treatment for
homology given in \cite{P} very closely.

The Chevalley-Eilenberg cohomology of a Lie algebra $\gf$ with trivial coefficients
is not isomorphic (up to a degree shift) to the Chevalley-Eilenberg cohomology
of $\gf$ with coadjoint coefficients as it is the case for Leibniz cohomology (see
Corollary~\ref{coadj}). Instead these cohomologies are only related by a long
exact sequence (see Proposition \ref{lescoadj}). The cohomology measuring the
deviation from such an isomorphism will appear in a spectral sequence (see Theorem
\ref{theorem_A}) which can be used to relate the Leibniz cohomology of a Lie
algebra to its Chevalley-Eilenberg cohomology (see Proposition \ref{lesrelcoh}).

The exterior product map $m_n:\Lambda^n\gf\otimes\gf\to\Lambda^{n+1}\gf$
given on homogeneous tensors by $x_1\wedge\ldots\wedge x_n\otimes x\mapsto
x_1\wedge\ldots\wedge x_n\wedge x$ induces a monomorphism$$m^\bullet:
\overline{\CCE}^\bullet(\gf,\F)[-1]\hookrightarrow\CCE^\bullet(\gf,\gf^*)\,,$$
where $\overline{\CCE}^\bullet(\gf,\F)$ is the truncated cochain complex
$$\overline{\CCE}^0(\gf,\F):=0\,\mbox{ and }\,\overline{\CCE}^n(\gf,\F):=
\CCE^n(\gf,\F)\,\,\,\mbox{ for every integer}\,\,\,n>0\,.$$ The cochain complex
$\CR^\bullet(\gf)$ is defined by $\CR^\bullet(\gf):=\coker(m^\bullet)[-1]$, and
the corresponding chain complex is defined by $\CR_\bullet(\gf):=\Ker(m_\bullet)[1]$
(see \cite[(1.2.8), p.\ 9]{HA} for the definition of the degree shift and see \cite[Section
1]{P} for the definition of $\CR_\bullet(\gf)$). We will mainly use the cochain
complex in our paper, but the chain complex appears in Theorem \ref{theorem_A}
and its proof. Observe that classes in $\CR^n(\gf)$ are represented by cochains of
degree $n+1$ with values in $\gf^*$, i.e., they have $n+2$ arguments. From the
short exact sequence $$0\to\overline{\CCE}^\bullet(\gf,\F)[-1]\to\CCE^\bullet
(\gf,\gf^*)\to\CR^\bullet(\gf)[1]\to 0$$ of cochain complexes we obtain the following
long exact sequence:

\begin{pro}\label{lescoadj}
For every Lie algebra $\gf$ over a field $\F$ there is a long exact sequence
\begin{eqnarray*}
0 & \to & \HCE^2(\gf,\F)\to\HCE^1(\gf,\gf^*)\to\HR^0(\gf)\\
& \to & \HCE^3(\gf,\F)\to\HCE^2(\gf,\gf^*)\to\HR^1(\gf)\to\cdots
\end{eqnarray*}
and an isomorphism $\HCE^1(\gf,\F)\cong\HCE^0(\gf,\gf^*)$.
\end{pro}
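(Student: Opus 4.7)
The plan is to apply the zig-zag lemma to the short exact sequence of cochain complexes displayed immediately before the proposition; since $\CR^\bullet(\gf)$ is defined as a shifted cokernel, the only substantive task is to verify that $m^\bullet$ is a monomorphism of cochain complexes, and then to read off the cohomology of each complex after accounting for the shifts.

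First I would unpack $m^\bullet$ explicitly: with the shift convention $C[-1]^n = C^{n+1}$ used in the paper, the degree-$n$ component is the map $m^n : \overline{\CCE}^{n+1}(\gf,\F) \to \CCE^n(\gf,\gf^*)$ given by $m^n(\alpha)(x_1,\dots,x_n)(x) = \alpha(x_1,\dots,x_n,x)$, i.e., the $\F$-linear dual of the surjection $m_n : \Lambda^n\gf \otimes \gf \twoheadrightarrow \Lambda^{n+1}\gf$. Injectivity of $m^n$ then follows automatically from the surjectivity of $m_n$. For the chain-map identity $d_{\gf^*}^n \circ m^n = m^{n+1} \circ d_\F^{n+1}$, the bracket-bracket terms indexed by $i < j \le n$ match on both sides verbatim. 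The Lie-derivative portion of $d_{\gf^*}^n$, expanded via the coadjoint action $(x_i\cdot f)(x) = -f([x_i,x])$, produces contributions $(-1)^{i+1}\alpha(x_0,\dots,\widehat{x_i},\dots,x_n,[x_i,x])$; cycling $[x_i,x]$ from the last to the first slot of the alternating cochain $\alpha$ costs a sign $(-1)^n$, producing $(-1)^{i+n+1}\alpha([x_i,x],x_0,\dots,\widehat{x_i},\dots,x_n)$, which is exactly the $(i,n+1)$-contribution to $(d_\F^{n+1}\alpha)(x_0,\dots,x_n,x) = m^{n+1}(d_\F^{n+1}\alpha)(x_0,\dots,x_n)(x)$.

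With the short exact sequence in hand, the zig-zag lemma gives a long exact sequence $\cdots \to H^n(A) \to H^n(B) \to H^n(C) \to H^{n+1}(A) \to \cdots$, where $A = \overline{\CCE}^\bullet(\gf,\F)[-1]$, $B = \CCE^\bullet(\gf,\gf^*)$, and $C = \CR^\bullet(\gf)[1]$. Unravelling the shifts yields $H^n(A) = \HCE^{n+1}(\gf,\F)$ for every $n \ge 0$ (the truncation $\overline{\CCE}^0 = 0$ only eliminates $H^{-1}(A)$), $H^n(B) = \HCE^n(\gf,\gf^*)$, and $H^n(C) = \HR^{n-1}(\gf)$ for $n \ge 1$, while $H^0(C) = 0$ because $C^0 = \CR^{-1}(\gf) = 0$. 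Reading off the segment beginning with $0 \to H^0(A) \to H^0(B) \to H^0(C)$ produces the isomorphism $\HCE^1(\gf,\F) \cong \HCE^0(\gf,\gf^*)$, and the tail from $H^1(A)$ onward is precisely the asserted long exact sequence. The main obstacle is the chain-map verification just sketched; everything else is a bookkeeping exercise with shifts and the standard long exact sequence.
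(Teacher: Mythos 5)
Your proposal is correct and follows essentially the same route as the paper, which obtains the long exact sequence directly from the displayed short exact sequence $0\to\overline{\CCE}^\bullet(\gf,\F)[-1]\to\CCE^\bullet(\gf,\gf^*)\to\CR^\bullet(\gf)[1]\to 0$; you merely supply the details (injectivity and the chain-map identity for $m^\bullet$, and the shift bookkeeping) that the paper leaves implicit. One hair worth noting: your claim that the truncation ``only eliminates $H^{-1}$'' uses that $\HCE^1(\gf,\F)$ equals the space of closed $1$-cochains, which holds because the degree-zero differential vanishes for trivial coefficients.
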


\noindent {\bf Remark.} If we assume that the characteristic of the ground field $\F$
is not 2, then $\HR^0(\gf)\cong[S^2(\gf)^*]^\gf$ is the space of invariant symmetric
bilinear forms on $\gf$ (see \cite[p.\ 403]{P}). As a consequence, we obtain from Proposition
\ref{lescoadj} in the case that $\ch(\F)\ne 2$ the five-term exact sequence $$0\to\HCE^2
(\gf,\F)\to\HCE^1(\gf,\gf^*)\to[S^2(\gf)^*]^\gf\to\HCE^3(\gf,\F)\to\HCE^2(\gf,\gf^*)\,,$$
which generalizes \cite[Proposition 1.3\,(1) \& (3)]{Fa}. Note that the map $[S^2(\gf)^*]^\gf
\to\HCE^3(\gf,\F)$ is the classical Cartan-Koszul map defined by $\omega\mapsto\overline{\omega}
+\BCE^3(\gf,\F)$, where $\overline{\omega}(x\wedge y\wedge z):=\omega(xy,z)$ for any
elements $x,y,z\in\gf$ (see \cite[p.\ 403]{P}).
\vspace{.3cm}

For a Lie algebra $\gf$ and a left $\gf$-module $M$ viewed as a symmetric Leibniz $\gf$-bimodule
$M_s$, we have a natural monomorphism $$\CCE^\bullet(\gf,M)\hookrightarrow\CL^\bullet(\gf,
M_s)\,.$$ The cokernel of this morphism is by definition (up to a shift in the degree) the cochain
complex $\CCE_\rel^\bullet(\gf,M)$: $$\CCE_\rel^\bullet(\gf,M):=\coker(\CCE^\bullet(\gf,M)
\to\CL^\bullet(\gf,M_s))[-2]\,.$$ We therefore have another long exact sequence. (For the
isomorphisms in degrees 0 and 1 see \cite[Corollary 4.2\,(a)]{F} and \cite[Corollary 4.4\,(a)]{F},
respectively.)

\begin{pro}\label{lesrelcoh}
Let $\gf$ be a Lie algebra, and let $M$ be a left $\gf$-module considered as a symmetric Leibniz
$\gf$-bimodule $M_s$. Then there is a long exact sequence
\begin{eqnarray*}
0 & \to & \HCE^2(\gf,M)\to\HL^2(\gf,M_s)\to\HCE_\rel^0(\gf,M)\\
& \to & \HCE^3(\gf,M)\to\HL^3(\gf,M_s)\to\HCE_\rel^1(\gf,M)\to\cdots
\end{eqnarray*}
and isomorphisms
$$\HL^0(\gf,M_s)\cong\HCE^0(\gf,M),\,\,\,\HL^1(\gf,M_s)\cong\HCE^1(\gf,M)\,.$$
\end{pro}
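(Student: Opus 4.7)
The plan is to derive the stated sequence directly from the long exact cohomology sequence associated with the short exact sequence of cochain complexes
$$0\to\CCE^\bullet(\gf,M)\to\CL^\bullet(\gf,M_s)\to Q^\bullet\to 0,$$
where $Q^\bullet$ is the cokernel. Since $\CCE_\rel^\bullet(\gf,M)$ is by definition $Q^\bullet[-2]$, we have $\HCE_\rel^{n-2}(\gf,M)\cong H^n(Q^\bullet)$ for all $n$, following the same shift convention already used for $\CR^\bullet$ in the previous subsection.

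First I would verify that the natural inclusion sending an alternating multilinear form on $\gf$ to itself, regarded as an element of $\CL^n(\gf,M_s)=\Hom_\F(\gf^{\otimes n},M)$, is a monomorphism of cochain complexes. Injectivity is clear. For the compatibility with the differentials, I would invoke the computation carried out just before Lemma \ref{antisym}: since $M_s$ is symmetric, the Leibniz coboundary $\dl^n$ coincides with $\dd^n$ on all of $\CL^n(\gf,M_s)$. When applied to an alternating cochain over a Lie algebra, the formula for $\dd^n$ is, after pairing terms using the antisymmetry of the bracket, exactly the classical Chevalley-Eilenberg coboundary. In particular, $\dl^n$ preserves alternation, so the inclusion is indeed a chain map.

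Next I would identify the cokernel in low degrees. A $0$-cochain is an element of $M$ and a $1$-cochain is a linear map $\gf\to M$, so in both cases there is no alternation condition to impose, and the inclusion is an equality for $n=0,1$. Hence $Q^0=Q^1=0$, and in particular $H^0(Q^\bullet)=H^1(Q^\bullet)=0$. The standard long exact cohomology sequence attached to the above short exact sequence reads
$$\cdots\to H^{n-1}(Q^\bullet)\to\HCE^n(\gf,M)\to\HL^n(\gf,M_s)\to H^n(Q^\bullet)\to\HCE^{n+1}(\gf,M)\to\cdots,$$
and substituting $H^n(Q^\bullet)=\HCE_\rel^{n-2}(\gf,M)$ together with the vanishing in degrees $0$ and $1$ yields at once the claimed isomorphisms $\HL^0(\gf,M_s)\cong\HCE^0(\gf,M)$ and $\HL^1(\gf,M_s)\cong\HCE^1(\gf,M)$, as well as the stated long exact sequence starting from $\HCE^2(\gf,M)$.

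The only delicate step is the verification that the Leibniz coboundary restricts to the Chevalley-Eilenberg coboundary on alternating cochains with values in $M_s$; this relies both on the symmetry of the bimodule (to absorb the right-action term appearing in $\dl^n$ into a left-action term, as in the computation recalled in the excerpt) and on the antisymmetry of the Lie bracket (to ensure that the resulting cochain is again alternating). Everything else is a routine application of the snake lemma to the above short exact sequence of cochain complexes.
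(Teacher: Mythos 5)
Your proposal is correct and follows essentially the same route as the paper: the long exact cohomology sequence of the short exact sequence $0\to\CCE^\bullet(\gf,M)\to\CL^\bullet(\gf,M_s)\to\CCE_\rel^\bullet(\gf,M)[2]\to 0$, with the cokernel vanishing in degrees $0$ and $1$ accounting for the two isomorphisms. The only difference is cosmetic: the paper delegates the chain-map verification and the low-degree isomorphisms to the computation before Lemma \ref{antisym} and to the references \cite[Corollary 4.2\,(a)]{F} and \cite[Corollary 4.4\,(a)]{F}, whereas you spell these details out directly.
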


\noindent {\bf Remark.} If we again assume that the characteristic of the ground field $\F$
is not 2, it follows from Theorem \ref{theorem_A} below in conjunction with the remark after
Proposition~\ref{lescoadj} that $\HCE_\rel^0(\gf,\F)\cong\HR^0(\gf)\cong[S^2(\gf)^*]^\gf$
is the space of invariant symmetric bilinear forms on $\gf$. So when $\ch(\F)\ne 2$, we obtain
the five-term exact sequence $$0\to\HCE^2(\gf,\F)\to\HL^2(\gf,\F)\to[S^2(\gf)^*]^\gf
\to\HCE^3(\gf,\F)\to\HL^3(\gf,\F)$$ as a special case of Proposition \ref{lesrelcoh} (cf.\
\cite[Proposition 3.2]{HPL} for fields of characteristic zero). Note that Corollary \ref{coadj}
implies that the second terms of the five-term exact sequences in Proposition \ref{lescoadj}
and in Proposition \ref{lesrelcoh} for $M:=\F$ are isomorphic, but the fifth terms are not
necessarily isomorphic (see the remark after Example A below).
\vspace{.3cm}

Observe that as for $\CR^n(\gf)$, representatives of classes in $\CCE_\rel^n(\gf,M)$ have
$n+2$ arguments.

On the quotient cochain complex $\CCE_\rel^\bullet(\gf,M)$ there is the following filtration
$${\mathcal F}^p\CCE_\rel^{n}(\gf,M)=\{[c]\in\CCE_\rel^{n}(\gf,M)\mid c(x_1,\ldots,
x_{n+2})=0\mbox{ if }\exists\,j\leq p+1:\,x_{j-1}=x_j\}\,.$$ Note that the condition is
independent of the representative $c$ of the class $[c]$. This defines a finite decreasing
filtration
\begin{equation}  \label{finite_filtration_1}
{\mathcal F}^0\CCE_\rel^{n}(\gf,M)=\CCE_\rel^{n}(\gf,M)\supset{\mathcal
F}^1\CCE_\rel^{n}(\gf,M)\supset\cdots\supset{\mathcal F}^{n+1}\CCE_\rel^{n}(\gf,M)
=\{0\}\,.
\end{equation}
Then we have the following result:

\begin{lem}\label{lemma_filtration_compatible}
This filtration is compatible with the Leibniz coboundary map $\dl^\bullet$.
\end{lem}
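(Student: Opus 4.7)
The plan is to lift the filtration to the unreduced Leibniz cochain complex $\CL^\bullet(\gf,M_s)$, verify compatibility there, and then descend to the quotient. To that end I would set
\[
\mathcal{F}^p\CL^n(\gf,M_s):=\{c\in\CL^n(\gf,M_s)\mid c(x_1,\ldots,x_n)=0\text{ whenever }\exists\,j\le p+1:\,x_{j-1}=x_j\}.
\]
Since every Chevalley--Eilenberg cochain is alternating, it vanishes on any input with two equal consecutive entries, hence the image of the canonical inclusion $\CCE^\bullet(\gf,M)\hookrightarrow\CL^\bullet(\gf,M_s)$ lies in $\mathcal{F}^p\CL^\bullet(\gf,M_s)$ for every $p$. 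The induced filtration on the cokernel therefore agrees with the filtration $\mathcal{F}^p\CCE_\rel^\bullet(\gf,M)$ defined in the statement, and it suffices to prove that $\dl^n(\mathcal{F}^p\CL^n)\subseteq\mathcal{F}^p\CL^{n+1}$ for every $n$ and $p$.

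The verification proceeds by a direct term-by-term inspection. I would fix $c\in\mathcal{F}^p\CL^n$ and an input $(x_1,\ldots,x_{n+1})$ with $x_{j_0-1}=x_{j_0}=:x$ for some $2\le j_0\le p+1$, and evaluate each of the three parts of $(\dl^nc)(x_1,\ldots,x_{n+1})$ using the formula from Section~\ref{prelim}. In the Lie-action sum $\sum_i(-1)^{i+1}x_i\cdot c(\ldots,\hat{x}_i,\ldots)$, the summands with $i\notin\{j_0-1,j_0\}$ vanish because the pair $(x,x)$ still sits at two consecutive positions controlled by $\mathcal{F}^p$ in the shortened tuple, while the two summands with $i=j_0-1$ and $i=j_0$ evaluate $c$ on the same tuple (since $x_{j_0-1}=x_{j_0}$) with opposite signs and cancel. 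The right-action term $(-1)^{n+1}c(x_1,\ldots,x_n)\cdot x_{n+1}$ vanishes directly because the pair $(x,x)$ is still present at positions $(j_0-1,j_0)$ of $(x_1,\ldots,x_n)$. In the bracket sum $\sum_{i<j}(-1)^ic(\ldots,\hat{x}_i,\ldots,x_ix_j,\ldots)$, the term with $(i,j)=(j_0-1,j_0)$ kills $c$ because the inserted bracket is $[x,x]=0$, and for every $j>j_0$ the two summands $(i,j)=(j_0-1,j)$ and $(i,j)=(j_0,j)$ yield identical reduced tuples with opposite signs, so they cancel.

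The hard part concerns the remaining bracket summands, those with $i<j_0-1$ and $j\in\{j_0-1,j_0\}$: they produce reduced tuples in which $[x_i,x]$ and $x$ appear at positions $(j_0-2,j_0-1)$ but in opposite orders, and both carry the prefactor $(-1)^i$, so they do not vanish individually. The crucial observation is a polarization identity: any multilinear map that vanishes on inputs with two fixed consecutive entries equal is automatically skew-symmetric in those two entries, because expanding $0=c(\ldots,a+b,a+b,\ldots)$ by multilinearity and discarding the two equal-pair terms yields $c(\ldots,a,b,\ldots)+c(\ldots,b,a,\ldots)=0$. Since $j_0-1\le p$, this polarization applies to $c$ at the pair of positions $(j_0-2,j_0-1)$, so the two remaining bracket contributions cancel. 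Polarization uses only multilinearity, so the argument works in arbitrary characteristic, as required by the paper's scope. Assembling all the cancellations gives $(\dl^nc)(x_1,\ldots,x_{n+1})=0$, which proves $\dl^n(\mathcal{F}^p\CL^n)\subseteq\mathcal{F}^p\CL^{n+1}$ and, by descent to the cokernel, the lemma.
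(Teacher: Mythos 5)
Your proof is correct and follows essentially the same route as the paper's: a term-by-term inspection of the coboundary in which most summands vanish outright or cancel in sign-opposite pairs, with the delicate cross terms ($i<j_0-1$, $j\in\{j_0-1,j_0\}$) handled by exactly the polarization identity $c(\ldots,a,b,\ldots)+c(\ldots,b,a,\ldots)=-c(\ldots,a,a,\ldots)-c(\ldots,b,b,\ldots)$ that the paper also uses. Making the lift of the filtration to the unreduced complex $\CL^\bullet(\gf,M_s)$ explicit is a harmless clarification of what the paper leaves implicit.
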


\begin{proof}
The Leibniz coboundary map, acting on a cochain $c$, is an alternating sum of operators
$\dl_{ij} (c)$, $\delta_i(c)$ and $\partial(c)$, where $\dl_{ij}(c)$ is the term involving the
product of the $i$-th and the $j$-th element, $\delta_i(c)$ is the term involving the left
action of the $i$-th element, and $\partial(c)$ is the term involving the right action of the
$(n+1)$-th element. As the bimodule is symmetric, the term involving the right action can
be counted among the terms involving the left actions.

We have to show that $\dl^\bullet(\,{\mathcal F}^p\CCE_\rel^{n}(\gf,M))\subseteq{\mathcal F}^p
\CCE_\rel^{n+1}(\gf,M)$. We thus consider the different terms of $\dl^\bullet(c)$ with
two equal elements as arguments in the first $p+1$ positions and have to show that all
terms are zero. For $\dl_{ij}(c)$ with $i,j\leq p+1$, the assertion is clear because either
the two equal elements do not occur in the  product, and then it is correct, or at least one
of them occurs, and then from the product terms of the sum of $\dl_{ij}$ and $\dl_{ij+1}$
(or $\dl_{ij}$ and $\dl_{ij-1}$) we obtain an element $x_ix\otimes x+x\otimes x_ix$, which
is a sum of symmetric elements thanks to $$x_ix\otimes x+x\otimes x_ix=(x_ix+x)\otimes
(x_ix+x)-x_ix\otimes x_ix-x\otimes x\,.$$

Even more elementary, the assertion holds for $\dl_{ij}(c)$ with $i,j\geq p+1$. For $\dl_{ij}
(c)$ with $i\leq p+1$ and $j\geq p+2$, the assertion is clear in case $x_i$ is not one of the
equal elements. In case it is, the two terms corresponding to the product action of the two
equal elements cancel as they are equal and have different sign.

For the action terms $\delta_i(c)$ the reasoning is similar. In case $i\leq p+1$, either the two
equal elements do not occur and the assertion holds, or both occur and cancel each other
because of the alternating sign. For $\delta_i(c)$ with $i\geq p+2$, the assertion is clear in
any case.
\end{proof}

The lemma implies that there is a spectral sequence of a filtered cochain complex associated
to this filtration which thanks to (\ref{finite_filtration_1}) converges  in the strong (i.e., finite)
sense to $\HCE_\rel^n(\gf,M)$. 

The next step is then to compute the $0$-th term of this spectral sequence, i.e., the associated
graded vector space of the filtration $$E_0^{p,q}:= {\mathcal F}^p\CCE_\rel^{p+q}(\gf,M)
\,/\,{\mathcal F}^{p+1}\CCE_\rel^{p+q}(\gf,M)\,.$$ Observe that
\begin{multline*}
{\mathcal F}^p\CCE_\rel^{p+q}(\gf,M)=\\
\{c\in\CL^{p+q+2}(\gf,M_s)\mid c(x_1,\ldots,x_{p+q+2})=0\mbox{ if }\exists\,j\leq p+1:\,
x_{j-1}=x_j\}\,/\,\CCE^{p+q+2}(\gf,M).
\end{multline*}
In the quotient space $E_0^{p,q}$, the term $\CCE^{p+q+2}(\gf,M)$, by which both filtration
spaces are  divided, disappears.
 
Observe that the filtration can be expressed as  $${\mathcal F}^p\CCE_\rel^{p+q}(\gf,M)
=\{[c]\in\CCE_\rel^{p+q}(\gf,M)\,|\,c|_{\Ker(\otimes^{p+1}\gf\to\Lambda^{p+1}\gf)
\otimes(\otimes^{q+1}\gf)}=0\}\,.$$ This is useful, because by elementary linear algebra,
we have $$F^{\perp}/\,G^{\perp}=\Hom_\F(G/F,M)\,,$$ where $F^{\perp}:=\{f:E\to M
\mid f_{\vert F}=0\}$ and $G^{\perp}:=\{f:E\to M\mid f_{\vert G}=0\}$ for $F\subseteq
G\subseteq E$.

In order to be able to find $E_0^{p,q}$, we therefore have to compute $$\Ker(\otimes^{p+2}\gf\to\Lambda^{p+2}\gf)\otimes(\otimes^{q}\gf)\,/\,\Ker
(\otimes^{p+1}\gf\to\Lambda^{p+1}\gf)\otimes(\otimes^{q+1}\gf)\,.$$ By using the
isomorphism (see the proof of Theorem A in \cite{P}) $$\Ker(\otimes^{p+2}\gf\to
\Lambda^{p+2}\gf)\,/\,\Ker(\otimes^{p+2}\gf\to\Lambda^{p+1}\gf\otimes\gf)
\cong\Ker(\Lambda^{p+1}\gf\otimes\gf\to\Lambda^{p+2}\gf)$$ and by applying
that $\Hom_\F$ and $\otimes$ are adjoint functors, we obtain that
\begin{eqnarray*}
E_0^{p,q} & = & \Hom_\F(\Ker(\Lambda^{p+1}\gf\otimes\gf\to\Lambda^{p+2}\gf)
\otimes\CL_{q}(\gf),M)\\
& = & \Hom_\F(\Ker(\Lambda^{p+1}\gf\otimes\gf\to\Lambda^{p+2}\gf),\Hom_\F(\CL_{q}(\gf),M))\\
& = & \Hom_\F(\CR_p(\gf),\CL^{q}(\gf,M))\,,
\end{eqnarray*}
by definition of the chain complex $\CR_\bullet(\gf)$ as the kernel of $m_\bullet$
(up to a degree shift). 
  
In the case of a finite-dimensional Lie algebra $\gf$, we can use the isomorphism
$\Hom_\F(U\otimes V,W)\cong U^*\otimes\Hom_\F(V,W)$ to write the $E_0$-term as $$E_0^{p,q}=[\Ker(\Lambda^{p+1}\gf\otimes\gf\to\Lambda^{p+2}
\gf)]^*\otimes\CL^{q}(\gf,M)\,.$$ In this particular case, one may observe that the
first tensor factor is the kernel of the exterior multiplication map $m_\bullet$, and thus
$$\Ker(\Lambda^{p+1}\gf\otimes\gf\to\Lambda^{p+2}\gf)^*=\Ker(m_\bullet)[1]^*=
\coker(m^\bullet)[-1]=\CR^p(\gf)\,.$$ Therefore the term $E_0^{p,q}$ takes in this
particular case the form $$E_0^{p,q}=\CR^p(\gf)\otimes\CL^{q}(\gf,M)\,.$$

Next, we will determine the differential on $E_0^{p,q}$:

\begin{lem}
The differential $\dl_0$ on $E_0^{p,q}\cong\Hom_\F(\CR_p(\gf),\CL^{q}(\gf,M))$
is induced by the coboundary operator $\dl^\bullet$ on $\CCE_\rel^\bullet(\gf,M)$.
More precisely, we have that $$d_0^{p,q}(f):=\dl^q_{\CL^{q}(\gf,M)}\circ f$$ for
every linear transformation $f\in\Hom_\F(\CR_p(\gf),\CL^{q}(\gf,M))$.
\end{lem}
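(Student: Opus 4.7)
The plan is to expand $\dl^{p+q+2}$ applied to a chosen representative and to isolate which summands survive modulo $\mathcal{F}^{p+1}$. Fix $f \in \Hom_\F(\CR_p(\gf), \CL^q(\gf,M))$, and choose a representative cochain $c \in \CL^{p+q+2}(\gf, M_s)$ with $[c] \in \mathcal{F}^p \CCE_\rel^{p+q}(\gf,M)$ that is alternating in its first $p+1$ arguments and satisfies
\[
f(\xi)(y_1,\ldots,y_q) = c(x_1,\ldots,x_{p+1},x,y_1,\ldots,y_q)
\]
for every $\xi = x_1 \wedge \cdots \wedge x_{p+1} \otimes x \in \CR_p(\gf)$ and all $y_1,\ldots,y_q \in \gf$. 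By the definition of the $E_0$-term, $d_0^{p,q}(f)$ is represented by the class of $\dl^{p+q+2}(c)$ in $E_0^{p,q+1}$, and under the identification with $\Hom_\F(\CR_p(\gf), \CL^{q+1}(\gf,M))$ it is obtained by evaluating $\dl^{p+q+2}(c)$ on elements of the form $\xi \otimes y_1 \otimes \cdots \otimes y_{q+1}$.

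Next, I would write out $(\dl^{p+q+2} c)(x_1,\ldots,x_{p+1},x,y_1,\ldots,y_{q+1})$ using the defining formula for $\dl^\bullet$ and group the summands by partitioning the argument indices into the \emph{filtration block} $\{1,\ldots,p+2\}$, housing $x_1,\ldots,x_{p+1},x$, and the \emph{free block} $\{p+3,\ldots,p+q+3\}$, housing $y_1,\ldots,y_{q+1}$. Because $M_s$ is symmetric, the right-action term of $\dl^\bullet$ can be absorbed into the left-action sum. The summands whose relevant indices lie entirely in the free block are precisely the summands of $\dl^q_{\CL^q(\gf,M)}(f(\xi))(y_1,\ldots,y_{q+1})$, because the arguments $x_1,\ldots,x_{p+1},x$ pass untouched through these operations and $c$ evaluates via $f$.

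The heart of the argument is then to show that every remaining summand vanishes modulo $\mathcal{F}^{p+1} \CCE_\rel^{p+q+1}(\gf,M)$. The purely internal left-action terms with $i \leq p+2$ and the purely internal product terms with $i < j \leq p+2$ are handled by the same pairwise-cancellation mechanism used in the proof of Lemma \ref{lemma_filtration_compatible}: the alternation of $c$ in its first $p+1$ arguments combined with the symmetry of $M_s$ forces each such contribution, after the appropriate grouping, to be alternating in all of $x_1,\ldots,x_{p+1},x$ and hence to already lie in $\mathcal{F}^{p+1}$. For the mixed product terms with $i \leq p+2 < j$, the defining identity $m_{p+1}(\xi) = x_1 \wedge \cdots \wedge x_{p+1} \wedge x = 0$ of $\CR_p(\gf)$ is brought in: summing the bracket contributions over $i$ with their alternating signs yields an expression fully alternating in the filtration block, hence vanishing on $\CR_p(\gf)$.

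The main obstacle is the sign and index bookkeeping for the mixed bracket terms: the alternation built into $c$ only yields antisymmetry in the first $p+1$ arguments, and it is the relation $m_{p+1}(\xi) = 0$ that closes the gap to full antisymmetry in all $p+2$ arguments of the filtration block. Once this is carried out, collecting what remains gives the claimed identity $d_0^{p,q}(f)(\xi) = \dl^q(f(\xi))$.
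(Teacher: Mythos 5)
Your proposal is correct and follows essentially the same route as the paper's proof: expand the Leibniz coboundary into its action and product terms, kill every term touching the first $p+2$ arguments by the cancellation mechanisms already used for Lemma \ref{lemma_filtration_compatible} (your reformulation of the mixed-term case --- the sum over $i$ is fully alternating in the filtration block and hence vanishes on $\CR_p(\gf)$ --- is just the paper's criterion of membership in ${\mathcal F}^{p+1}$ read through the identification $E_0^{p,q+1}\cong\Hom_\F(\CR_p(\gf),\CL^{q+1}(\gf,M))$), and identify the surviving free-block terms with $\dl^q_{\CL^{q}(\gf,M)}\circ f$. The only caveat, which the paper also glosses over, is that the surviving block agrees with $\dl^q(f(\xi))$ only up to a uniform sign depending on $p$, which is harmless for the spectral sequence.
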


\begin{proof}
By definition, the differential $\dl_0$ of the spectral sequence is the differential which is induced
by the Leibniz coboundary map $\dl^\bullet$ on the associated graded quotients $$\dl_0:
{\mathcal F}^p\CCE_\rel^{p+q}(\gf,M)\,/\,{\mathcal F}^{p+1}\CCE_\rel^{p+q}(\gf,M)\to
{\mathcal F}^p\CCE_\rel^{p+q+1}(\gf,M)\,/\,{\mathcal F}^{p+1}\CCE_\rel^{p+q+1}(\gf,M)\,.$$
In order to examine which terms $\dl_{ij}(c)$, $\delta_i(c)$ and $\partial(c)$ are zero for a cochain 
$c\in{\mathcal F}^p\CCE_\rel^{p+q}(\gf,M)$, we have to insert two consecutive equal elements in
the arguments of $c$ within the first $p+2$ arguments.

Now, by the same reasoning as in the proof of Lemma \ref{lemma_filtration_compatible}, the terms
$\dl_{ij}(c)$ vanish in case $i,j\leq p+2$, because in case the equal elements are not involved, the
formula for $\dl_{ij}(c)$ diminishes the number of arguments by one, and as $c$ is of degree $p$ in
the filtration, this then gives zero. In case the elements occur, they create once again a symmetric
element of the form $x_ix\otimes x+x\otimes  x_ix$. Also for $\dl_{ij}(c)$ with $i\leq p+2$ and
$j\geq p+3$, the terms are zero when the equal elements are not involved, and are zero in addition
with $\dl_{ij+1}(c)$ (or $\dl_{ij-1}(c)$), in case of multiplying with one of the equal elements. The
terms $\delta_i(c)$ for $i\leq p+1$ vanish as the corresponding formula diminishes the number
of arguments by one in case the equal elements do not occur, and annihilate each other in case
they occur.

Thus, there remain the terms $\dl_{ij}(c)$ with $i,j\geq p+3$, $\delta_i(c)$ with $i\geq p+3$,
and $\partial(c)$, which form together the coboundary map of the cochain complex $\CL^\bullet
(\gf,M)$.
\end{proof}

Consequently, in the general case we obtain for the first term of the spectral sequence:
$$E_1^{p,q}=\Hom_\F(\CR_p(\gf),\HL^{q}(\gf,M_s))\,,$$
and in the case of a finite-dimensional Lie algebra $\gf$ we have that
$$E_1^{p,q}=\CR^p(\gf)\otimes\HL^{q}(\gf,M_s))\,.$$

Now we proceed to identify the differential $\dl_1$ on $E_1^{p,q}$. The differential
$\dl_1$ is still induced by the Leibniz coboundary map on the filtered cochain complex.
As the classes in $\HL^{q}(\gf,M_s)$ are represented by cocycles, the part of the
Leibniz coboundary operator $\dl^q_{\CL^q(\gf,M_s)}$ constituting the differential
must be zero. The action of one of the remaining terms on $\HL^{q}(\gf,M_s)$ must
also be zero since the Cartan relations for Leibniz cohomology (due to Loday and
Pirashvili \cite[Proposition 3.1]{LP}) imply that a Leibniz algebra acts trivially on its
cohomology. (For the reader interested in left Leibniz algebras, a proof of these
formulas adapted to this case can be found in \cite[Proposition 1.3.2]{C}.) Note that
the Cartan relations only hold for $q\geq 1$. But as the Leibniz $\gf$-bimodule $M$
is symmetric, the action of $\gf$ on $\HL^{0}(\gf,M_s)$ is also trivial. Therefore, the
remaining terms of the differential constitute the coboundary operator $\dl^p_{\CL^p
(\gf)}$ on $\CR^p(\gf)$. 

Consequently, in the general case we obtain for the second term of the spectral sequence:
$$E_2^{p,q}=\HR^p(\gf,\HL^{q}(\gf,M_s))\,,$$ where the right-hand side denotes
the $\HR$-cohomology with values in the trivial $\gf$-module $\HL^\bullet(\gf,M_s)$.
It is the cohomology of the cochain complex arising from applying the exact
functor $\Hom_\F(-,\HL^\bullet(\gf,M_s))$ to the chain complex $\CR_\bullet(\gf)$.
It follows from the exactness of this functor and the Universal Coefficient Theorem
(for example, see \cite[Theorem 3.6.5]{HA}) that we can express
the $E_2$-term as $$E_2^{p,q}=\Hom_\F(\HR_p(\gf),\HL^{q}(\gf,M_s))\,,$$ and
in the special case of a finite-dimensional Lie algebra $\gf$, we have that $$E_2^{p,q}
=\HR^p(\gf)\otimes\HL^{q}(\gf,M_s)\,.$$

This discussion proves the following result which (up to dualization) is Theorem~A in
\cite{P}. In the case of trivial coefficients (and possibly topological Fr\'echet Lie
algebras) the second part of Theorem \ref{theorem_A} has also been obtained by
Lodder (see \cite[Theorem 2.10]{Lo}).

\begin{thm}\label{theorem_A}
Let $\gf$ be a Lie algebra, and let $M$ be a left $\gf$-module considered as a
symmetric Leibniz $\gf$-bimodule $M_s$. Then there is a spectral sequence converging
to $\HCE^\bullet_\rel(\gf,M)$ with second term $$E_2^{p,q}=\Hom_\F(\HR_p(\gf),
\HL^{q}(\gf,M_s))\,.$$ Moreover, if $\gf$ is finite dimensional, then the $E_2$-term
of this spectral sequence can be written as $$E_2^{p,q}=\HR^p(\gf)\otimes \HL^{q}
(\gf,M_s))\,.$$
\end{thm}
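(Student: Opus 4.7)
The plan is to take the finite decreasing filtration $\mathcal{F}^p\CCE_\rel^\bullet(\gf,M)$ already introduced above and analyze the associated spectral sequence. By Lemma~\ref{lemma_filtration_compatible} the filtration is compatible with $\dl^\bullet$, and by (\ref{finite_filtration_1}) it is finite, so the resulting spectral sequence converges strongly to $\HCE^\bullet_\rel(\gf,M)$. The actual work is to identify the $E_0$, $E_1$, and $E_2$ pages.

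For $E_0$ I would first rewrite the filtration via annihilators: $\mathcal{F}^p\CCE_\rel^{p+q}(\gf,M)$ is the space of (classes of) $\F$-linear maps on $\otimes^{p+q+2}\gf$ that vanish on $\Ker(\otimes^{p+1}\gf\to\Lambda^{p+1}\gf)\otimes\otimes^{q+1}\gf$. The elementary identity $F^\perp/G^\perp\cong\Hom_\F(G/F,M)$, combined with the isomorphism recalled from the proof of \cite[Theorem A]{P},
$$\Ker(\otimes^{p+2}\gf\to\Lambda^{p+2}\gf)/\Ker(\otimes^{p+2}\gf\to\Lambda^{p+1}\gf\otimes\gf)\cong\Ker(\Lambda^{p+1}\gf\otimes\gf\to\Lambda^{p+2}\gf),$$
and the tensor--hom adjunction together with the definition of $\CR_p(\gf)$, then yield $E_0^{p,q}\cong\Hom_\F(\CR_p(\gf),\CL^q(\gf,M))$.

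Next I would pin down the two differentials. By the same term-by-term bookkeeping used in the proof of Lemma~\ref{lemma_filtration_compatible}, the only summands of $\dl^\bullet$ surviving modulo $\mathcal{F}^{p+1}$ on $E_0$ are those acting on the last $q+1$ entries; these reassemble to the Leibniz coboundary $\dl^q$ composed after $f$, giving $E_1^{p,q}=\Hom_\F(\CR_p(\gf),\HL^q(\gf,M_s))$. For $\dl_1$, the remaining terms would represent the $\gf$-action on $\HL^q(\gf,M_s)$, which is trivial by the Cartan relations of Loday--Pirashvili (proved in the left setting in \cite[Proposition 1.3.2]{C}) for $q\geq 1$, and trivial also for $q=0$ because $M_s$ is symmetric. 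What remains is precisely the dual of the $\CR_\bullet(\gf)$-differential acting on the first factor. Since $\F$ is a field, the Universal Coefficient Theorem then gives
$$E_2^{p,q}=\Hom_\F(\HR_p(\gf),\HL^q(\gf,M_s)),$$
and in the finite-dimensional case the natural isomorphism $\Hom_\F(U,V)\cong U^*\otimes V$ together with $\HR_p(\gf)^*\cong\HR^p(\gf)$ converts this into the tensor form.

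The main obstacle I expect is the $\dl_1$ computation: one must verify, with correct signs and exactly the right surviving terms, that after applying the Cartan relations the remainder of the Leibniz coboundary really is the dualized $\CR_\bullet(\gf)$-differential, rather than something only homotopic to it. The boundary case $q=0$ also requires separate attention, since the Cartan relations do not directly apply there and the symmetry of $M_s$ must be invoked instead.
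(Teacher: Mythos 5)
Your proposal is correct and follows essentially the same route as the paper: the same filtration of $\CCE_\rel^\bullet(\gf,M)$, the annihilator identity and Pirashvili's kernel isomorphism to identify $E_0^{p,q}\cong\Hom_\F(\CR_p(\gf),\CL^q(\gf,M))$, the Cartan relations (with the separate symmetric-module argument for $q=0$) to identify $\dl_1$, and the Universal Coefficient Theorem plus dualization in the finite-dimensional case. The obstacle you flag at the end is exactly the point the paper also treats with the most care.
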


\noindent {\bf Remark.} As the spectral sequence of Theorem \ref{theorem_A} is
the spectral sequence of a filtered cochain complex, the higher differentials in this
spectral sequence are again induced by the Leibniz coboundary operator $\dl^{\bullet}$.
We will see in Example B below an instance of a concrete computation of the differential
$\dl_2$.
\vspace{.3cm}

Our main application of the spectral sequence will be the next theorem which is a refinement of
the cohomological analogue of \cite[Corollary 1.3]{P}:

\begin{thm}\label{vanlie}
Let $\gf$ be a Lie algebra, let $M$ be a left $\gf$-module considered as a symmetric Leibniz
$\gf$-bimodule $M_s$, and let $n$ be a non-negative integer. If $\HCE^k(\gf,M)=0$ for every
integer $k$ with $0\le k\le n$, then $\HL^k(\gf,M_s)=0$ for every integer $k$ with $0\le k\le
n$ and $\HL^{n+1}(\gf,M_s)\cong\HCE^{n+1}(\gf,M)$ as well as $\HL^{n+2}(\gf,M_s)\cong
\HCE^{n+2}(\gf,M)$. In particular, $\HCE^\bullet(\gf,M)=0$ implies that $\HL^\bullet(\gf,M_s)
=0$.
\end{thm}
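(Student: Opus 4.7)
The plan is to prove Theorem~\ref{vanlie} by a joint induction that propagates vanishing back and forth between $\HL^\bullet(\gf,M_s)$ and $\HCE_\rel^\bullet(\gf,M)$, alternating between the long exact sequence of Proposition~\ref{lesrelcoh} and the spectral sequence of Theorem~\ref{theorem_A}. The low degrees $k=0,1$ are already taken care of by the isomorphisms $\HL^k(\gf,M_s)\cong\HCE^k(\gf,M)$ stated at the end of Proposition~\ref{lesrelcoh}, which immediately give $\HL^0(\gf,M_s)=0$ and (when $n\ge 1$) $\HL^1(\gf,M_s)=0$.

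For the inductive step, I would fix $k$ with $2\le k\le n$ and assume $\HL^j(\gf,M_s)=0$ for all $j<k$. Substituting into the $E_2$-page of Theorem~\ref{theorem_A} we get $E_2^{p,q}=\Hom_\F(\HR_p(\gf),\HL^q(\gf,M_s))=0$ whenever $q\le k-1$. Strong convergence (guaranteed by the finite filtration~(\ref{finite_filtration_1})) then yields $E_\infty^{p,q}=0$ in the same strip, and since $\HCE_\rel^r(\gf,M)$ is filtered by the pieces $E_\infty^{p,r-p}$ with $0\le p\le r$, each of which satisfies $r-p\le r$, we obtain $\HCE_\rel^r(\gf,M)=0$ for every $r\le k-1$. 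Plugging this together with the hypothesis $\HCE^k(\gf,M)=0$ into the segment
\[\HCE_\rel^{k-2}(\gf,M)\to\HCE^k(\gf,M)\to\HL^k(\gf,M_s)\to\HCE_\rel^{k-1}(\gf,M)\]
of the long exact sequence of Proposition~\ref{lesrelcoh} then forces $\HL^k(\gf,M_s)=0$, closing the induction.

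Once $\HL^k(\gf,M_s)=0$ is known for every $k\le n$, the same spectral sequence argument upgrades the vanishing to $\HCE_\rel^r(\gf,M)=0$ for every $r\le n$. The two four-term segments
\[\HCE_\rel^{n-2}(\gf,M)\to\HCE^{n+1}(\gf,M)\to\HL^{n+1}(\gf,M_s)\to\HCE_\rel^{n-1}(\gf,M)\]
and
\[\HCE_\rel^{n-1}(\gf,M)\to\HCE^{n+2}(\gf,M)\to\HL^{n+2}(\gf,M_s)\to\HCE_\rel^n(\gf,M)\]
then have both outer terms zero, so the central arrow in each is an isomorphism, giving the desired identifications $\HL^{n+1}(\gf,M_s)\cong\HCE^{n+1}(\gf,M)$ and $\HL^{n+2}(\gf,M_s)\cong\HCE^{n+2}(\gf,M)$. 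The final \emph{in particular} clause follows by letting $n$ range over all non-negative integers.

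The only real obstacle is a bookkeeping one: one must line up the index shift in the long exact sequence (where $\HL^k$ is flanked by $\HCE_\rel^{k-2}$ and $\HCE_\rel^{k-1}$) with the strip $q\le k-1$ in which the $E_2$-page has just been shown to vanish. Once those indices match, the argument is essentially formal diagram chasing, so no new cohomological input beyond Proposition~\ref{lesrelcoh} and Theorem~\ref{theorem_A} should be needed.
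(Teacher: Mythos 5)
Your proof is correct and follows essentially the same route as the paper's: both arguments alternate between the long exact sequence of Proposition~\ref{lesrelcoh} (to pass from vanishing of $\HCE^k(\gf,M)$ and $\HCE_\rel^{k-2}(\gf,M)$ to vanishing of $\HL^k(\gf,M_s)$) and the spectral sequence of Theorem~\ref{theorem_A} (to pass from vanishing of $\HL^q(\gf,M_s)$ in a range to vanishing of $\HCE_\rel^r(\gf,M)$ in that range), the only difference being that the paper organizes the induction over $n$ while you induct over the degree $k$ for fixed $n$. One harmless index slip: the relevant segment of the long exact sequence is $\HCE_\rel^{k-3}(\gf,M)\to\HCE^k(\gf,M)\to\HL^k(\gf,M_s)\to\HCE_\rel^{k-2}(\gf,M)$, as your own displayed segments for $k=n+1$ and $k=n+2$ correctly reflect, rather than $\HCE_\rel^{k-2}\to\cdots\to\HCE_\rel^{k-1}$; since you have already established $\HCE_\rel^r(\gf,M)=0$ for all $r\le k-1$, both flanking terms vanish either way and the conclusion is unaffected.
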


\begin{proof}
The proof follows the proof of Corollary 1.3 in \cite{P} very closely.

According to Proposition \ref{lesrelcoh}, it suffices to prove that $\HCE^k(\gf,M)=0$ for
every integer $k$ with $0\le k\le n$ implies that $\HCE_\rel^n (\gf,M)=0$ for every integer
$k$ with $0\le k\le n$. We proceed by induction on $n$. In the case $n=0$, the hypothesis
yields that $E_2^{0,0}=0$ for the second term of the spectral sequence of Theorem~\ref{theorem_A},
and therefore we obtain from the convergence of the spectral sequence that $\HCE_\rel^0
(\gf,M)=0$ which initializes the induction.

So suppose now that $n\ge 1$ and $\HCE^k(\gf,M_s)=0$ for every integer $k$ with $0\le
k\le n+1$. By induction hypothesis, we obtain that $\HCE_\rel^n (\gf,M)=0$ for every
integer $k$ with $0\le k\le n$. Hence it follows from Proposition \ref{lesrelcoh} that
$\HL^k(\gf,M_s)=0$ for every integer $k$ with $0\le k\le n$ and $\HL^{n+1}(\gf,M_s)
\cong\HCE^{n+1}(\gf,M)=0$. Consequently, the second term $E_2^{p,q}$ of the
spectral sequence in Theorem \ref{theorem_A} is zero for $p+q\leq n+1$, and therefore
$\HCE_\rel^{n+1}(\gf,M)=0$.

Finally, the isomorphisms in degree $n+1$ and $n+2$, respectively, are an immediate
consequence of Proposition \ref{lesrelcoh}.
\end{proof}

\noindent {\bf Remark.} Note that the converse of Theorem \ref{vanlie} is also true, namely,
$\HCE^k(\gf,M)=0$ for every integer $k$ with $0\le k\le n$ if, and only if, $\HL^k(\gf,M_s)=0$
for every integer $k$ with $0\le k\le n$. In particular, $\HCE^\bullet(\gf,M)=0$ if, and only if,
$\HL^\bullet(\gf,M_s)=0$.
\vspace{.3cm}

Next, we illustrate the use of the spectral sequence of Theorem \ref{theorem_A} and the
associated long exact sequences (see Propositions \ref{lescoadj} and \ref{lesrelcoh}) by
two examples. We begin by computing the Leibniz cohomology of the smallest non-nilpotent
Lie algebra with coefficients in an arbitrary irreducible Leibniz bimodule (see also \cite[Example
1.4\,i)]{P} for trivial coeffcients in characteristic $\ne 2$). Note that for a ground field of
characteristic 2 the Leibniz cohomology of this Lie algebra is far more complicated than for
a field of characteristic $\ne 2$.

In the case that the irreducible Leibniz bimodule is of finite dimension $\ne 1$ we can prove more
generally for an arbitrary supersolvable Lie algebra the following vanishing result.

\begin{pro}\label{vansupsolv}
Let $\gf$ be a finite-dimensional supersolvable Lie algebra over a field $\F$, and let $M$
be a finite-dimensional irreducible Leibniz $\gf$-bimodule such that $\dim_\F M\ne 1$. Then
$\HL^n(\gf,M)=0$ for every positive integer $n$. Moreover, if $M$ is symmetric, then
$\HL^n(\gf,M)=0$ for every non-negative integer $n$.
\end{pro}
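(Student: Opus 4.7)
The plan is to reduce $\HL^n(\gf,M)=0$ to vanishing of suitable Chevalley-Eilenberg cohomology via Lemma \ref{antisym} and Theorem \ref{vanlie}, and then to establish that vanishing by induction on $\dim_\F\gf$ using the supersolvable ideal chain and the Hochschild-Serre spectral sequence.

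First I would invoke the classification of irreducible Leibniz bimodules recalled before Lemma \ref{antisym}: $M$ equals either $N_s$ or $N_a$ for some irreducible left $\gf$-module $N$ with $\dim_\F N=\dim_\F M>1$. In the case $M=N_s$, Lemma \ref{antisym}(a) gives $\HL^n(\gf,N_s)=\hl^n(\gf,N)$, and in the case $M=N_a$, Lemma \ref{antisym}(b) gives $\HL^0(\gf,N_a)=N$ together with $\HL^n(\gf,N_a)\cong\HL^{n-1}(\gf,\Hom_\F(\gf,N)_s)$ for every $n\ge 1$. Thus it suffices to prove $\HL^\bullet(\gf,V_s)=0$ for $V\in\{N,\Hom_\F(\gf,N)\}$, and by the ``in particular'' clause of Theorem \ref{vanlie} it in turn suffices to prove $\HCE^\bullet(\gf,V)=0$ for both choices of $V$.

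The technical heart is the following claim: if $\gf$ is a finite-dimensional supersolvable Lie algebra and $W$ is a finite-dimensional $\gf$-module none of whose composition factors is the trivial one-dimensional module, then $\HCE^\bullet(\gf,W)=0$. Using the long exact sequence in Chevalley-Eilenberg cohomology, induction on composition length reduces this to the case that $W$ is irreducible and non-trivial. I proceed by induction on $\dim_\F\gf$, letting $\gf_1=\F z$ be the bottom one-dimensional ideal in a supersolvable chain and writing $[x,z]=\alpha(x)z$ for a character $\alpha$ of $\gf$. The identity $x\cdot(z\cdot w)=\alpha(x)z\cdot w+z\cdot(x\cdot w)$ shows that both $z\cdot W$ and $\Ker(z\vert_W)$ are $\gf$-submodules of $W$; by irreducibility, either $z$ acts invertibly on $W$, so $\HCE^q(\gf_1,W)=0$ for every $q$ and the Hochschild-Serre spectral sequence $E_2^{p,q}=\HCE^p(\gf/\gf_1,\HCE^q(\gf_1,W))$ collapses to give $\HCE^\bullet(\gf,W)=0$, or $z$ acts trivially, so $W$ descends to $\gf/\gf_1$, where the induction hypothesis applies to $W$ (which appears as $\HCE^0(\gf_1,W)$) and to its twist $W_{-\alpha}$ (which appears as $\HCE^1(\gf_1,W)$).

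To finish the anti-symmetric case I would identify $\Hom_\F(\gf,N)\cong\gf^*\otimes_\F N$ and filter $\gf^*$ by the $\gf$-submodules $\{\lambda\in\gf^*\mid\lambda\vert_{\gf_i}=0\}$ arising from the supersolvable chain, whose successive quotients are one-dimensional characters of $\gf$. Tensoring with $N$ produces a filtration of $\Hom_\F(\gf,N)$ whose successive subquotients are of the form $\F_\chi\otimes N$, each irreducible of dimension $\dim_\F N>1$; iterated use of the long exact sequence in Chevalley-Eilenberg cohomology then reduces the vanishing to the preceding claim. The main obstacle is the bookkeeping for the character twist $W_{-\alpha}$ appearing in $\HCE^1(\gf_1,W)$ when $z$ acts trivially; one must verify that twisting an irreducible module by a character preserves both irreducibility and dimension, which ensures the inductive hypothesis continues to apply on $\gf/\gf_1$.
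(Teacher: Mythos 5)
Your overall reduction coincides with the paper's: split into the symmetric case ($M=N_s$, handled via Theorem \ref{vanlie}) and the anti-symmetric case ($M=N_a$, handled via Lemma \ref{antisym}\,(b) together with a filtration of $\gf^*\otimes N$ whose subquotients are character twists $\F_\chi\otimes N$ coming from the supersolvable chain). The two differences are that the paper disposes of the resulting Chevalley--Eilenberg vanishing simply by citing Barnes's theorem \cite[Theorem 3]{B1}, whereas you reprove it by a Hochschild--Serre induction on $\dim_\F\gf$, and that the paper runs the filtration argument directly in Leibniz cohomology (applying the symmetric case to each subquotient) rather than first descending everything to Chevalley--Eilenberg cohomology via Theorem \ref{vanlie}. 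Both differences are harmless; your version is self-contained where the paper uses a black box.

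There is, however, one genuine error: the ``technical heart'' claim is false as stated. Take $\gf=\af$, the non-abelian two-dimensional Lie algebra of Example A, and $W=F_1$. Then $W$ is irreducible and non-trivial, so none of its composition factors is the trivial one-dimensional module, yet $\HCE^1(\af,F_1)\cong\F\ne 0$. Your induction breaks at exactly the point you flag as the main obstacle: when $z$ acts trivially and $W=\F_\alpha$ is a non-trivial one-dimensional module, the twist $W_{-\alpha}$ appearing in $E_2^{p,1}$ is the \emph{trivial} module, so the inductive hypothesis ``no trivial composition factor'' is not preserved. The hypothesis your induction actually propagates --- since twisting by a character preserves irreducibility and dimension --- is that every composition factor of $W$ has dimension $\ne 1$; with that hypothesis the claim is exactly Barnes's theorem and your proof of it is correct. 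Since the two modules you apply the claim to, namely $N$ and the subquotients $\F_\chi\otimes N$ of $\Hom_\F(\gf,N)\cong\gf^*\otimes N$, have all composition factors of dimension $\dim_\F N>1$, the repair costs nothing, but as written the key lemma is false and its proof has a gap.
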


\begin{proof}
If $M$ is symmetric, then the assertion is an immediate consequence of Theorem \ref{vanlie}
in conjunction with \cite[Theorem 3]{B1}.

Now suppose that $M$ is not symmetric. Then it follows from \cite[Theorem 3.14]{F} that $M$
is anti-symmetric. We obtain from Lemma \ref{antisym}\,(b) that $$\HL^n(\gf,M)\cong\HL^{n-1}
(\gf,\Hom_\F(\gf,M)_s)\cong\HL^{n-1}(\gf,(\gf^*\otimes M)_s)$$ for every positive integer $n$.
By definition of supersolvability, the adjoint $\gf$-module has a composition series $$\gf_{\ad,
\ell}=\gf_n\supset\gf_{n-1}\supset\cdots\supset\gf_1\supset\gf_0=0$$ such that $\dim_\F\gf_j/
\gf_{j-1}=1$ for every integer $1\le j\le n$. From the short exact sequences $0\to\gf_{j-1}\to
\gf_j\to\gf_j/\gf_{j-1}\to 0$, we obtain by dualizing, tensoring each term with $M$, and symmetrizing
the short exact sequences: $$0\to[(\gf_j/\gf_{j-1})^*\otimes M]_s\to(\gf_j^*\otimes M)_s\to
(\gf_{j-1}^*\otimes M)_s\to 0$$ for every integer $1\le j\le n$. Since $M$ is irreducible and
$\dim_\F\gf_j/\gf_{j-1}=1$, we conclude that $[(\gf_j/\gf_{j-1})^*\otimes M]_s$ is an irreducible
symmetric Leibniz $\gf$-bimodule. Moreover, we have that $\dim_\F[(\gf_j/\gf_{j-1})^*\otimes M]_s\ne
1$ as $\dim_\F M\ne 1$. Hence we obtain inductively from the long exact cohomology sequence
that $\HL^n(\gf,M)\cong\HL^{n-1}(\gf,(\gf^*\otimes M)_s)=0$ for every positive integer $n$.
\end{proof}

\noindent {\bf Remark.} It follows from Lie's theorem that every finite-dimensional irreducible
Leibniz bimodule of a finite-dimensional solvable Lie algebra over an algebraically closed field
of characteristic zero is one-dimensional (see \cite[Corollary 4.1\,A]{H}). Consequently, in this
case the hypothesis of Proposition \ref{vansupsolv} is never satisfied, and thus this result is
only applicable over non-algebraically closed fields of characteristic zero or over fields of prime
characteristic.
\vspace{.3cm}

By virtue of Proposition \ref{vansupsolv}, in the next example it is enough to consider
one-dimensional Leibniz bimodules.
\vspace{.3cm}

\noindent {\bf Example A.} Let $\F$ denote an arbitrary field, and let $\af:=\F h\oplus\F e$
be the non-abelian two-dimensional Lie algebra over $\F$ with multiplication determined by
$he=e=-eh$. For any scalar $\lambda\in\F$ one can define a one-dimensional left $\af$-module
$F_\lambda:=\F 1_\lambda$ with $\af$-action defined by $h\cdot 1_\lambda:=\lambda
1_\lambda$ and $e\cdot 1_\lambda:=0$. Then the Chevalley-Eilenberg cohomology of $\af$
with coefficients in $F_\lambda$ is as follows:
\begin{eqnarray*}
\HCE^n(\af,F_\lambda)\cong
\left\{
\begin{array}{cl}
\F & \mbox{if }\lambda=0\mbox{ and }n=0,1\mbox{ or }\lambda=1\mbox{ and }n=1,2\\ 
0 & \hspace{2.5cm}\mbox{otherwise}\,.
\end{array}
\right.
\end{eqnarray*}
In particular, if $\lambda\ne 0,1$, then $\HCE^\bullet(\af,F_\lambda)=0$.

First, let us consider $F_\lambda$ as a symmetric Leibniz $\af$-bimodule $(F_\lambda)_s$. Then it
follows from Theorem \ref{vanlie} that $\HL^\bullet(\af,(F_\lambda)_s)=0$ for $\lambda\ne
0,1$.

In order to be able to compute the Leibniz cohomology for $\lambda=0,1$, and for the
anti-symmetric Leibniz $\af$-bimodules $(F_\lambda)_a$, let $M$ be an arbitrary left $\af$-module
considered as a symmetric Leibniz $\af$-bimodule $M_s$. Since $\HCE^n(\af,M)=0$ for every integer
$n\ge 3$, we obtain from Proposition \ref{lesrelcoh} the short exact sequence
\begin{equation}\label{leibnizses}
0\to\HCE^2(\af,M)\to\HL^2(\af,M_s)\to\HCE_\rel^0(\af,M)\to 0
\end{equation}
and the isomorphisms
\begin{equation}\label{leibnizreduc}
\HL^n(\af,M_s)\cong\HCE_\rel^{n-2}(\af,M)\hspace{.1cm}\mbox{for every integer }n\ge 3\,.
\end{equation}
Moreover, we have that $\HL^0(\af,M_s)\cong M^\af$ and $\HL^1(\af,M_s)\cong\HCE^1(\af,M)$.

For the computation of the relative cohomology spaces $\HCE_\rel^n(\af,M)$ we
need the coadjoint Chevalley-Eilenberg cohomology of $\af$. It is easy to verify that
$$\dim_\F\HCE^0(\af,\af^*)=1\,,$$
\begin{eqnarray*}
\dim_\F\HCE^1(\af,\af^*)=
\left\{
\begin{array}{cl}
2 & \mbox{if }\ch(\F)=2\\
1 & \mbox{if }\ch(\F)\ne 2\,,
\end{array}
\right.
\end{eqnarray*}
and
\begin{eqnarray*}
\dim_\F\HCE^2(\af,\af^*)=
\left\{
\begin{array}{cl}
1 & \mbox{if }\ch(\F)=2\\
0 & \mbox{if }\ch(\F)\ne 2\,.
\end{array}
\right.
\end{eqnarray*}
Consequently, we have to consider the cases $\ch(\F)=2$ and $\ch(\F)\ne 2$ differently.

Let us first assume that $\ch(\F)\ne 2$. Then it follows from Proposition \ref{lescoadj} that
$\HR^0(\af)\cong\HCE^1(\af,\af^*)\cong\F$ and $\HR^n(\af)=0$ for every integer
$n\ge 1$. Hence we derive from the spectral sequence of Theorem \ref{theorem_A}
that $\HCE_\rel^n(\af,M)\cong\HL^n(\af,M_s)$ for every non-negative integer $n$. In
conclusion, we obtain from (\ref{leibnizses}) and (\ref{leibnizreduc}) that
\begin{equation}\label{leibniz2}
\HL^2(\af,M_s)\cong M^\af\oplus\HCE^2(\af,M)
\end{equation}
and
\begin{equation}\label{leibnizhigh}
\HL^n(\af,M_s)\cong\HL^{n-2}(\af,M_s)\hspace{.1cm}\mbox{for every integer }n\ge 3\,.
\end{equation}

As an immediate consequence, in the case of $\ch(\F)\ne 2$ we deduce that $\dim_\F\HL^n
(\af,\F)=1$ for every non-negative integer $n$ and
\begin{eqnarray*}
\dim_\F\HL^n(\af,(F_1)_s)=
\left\{
\begin{array}{cl}
0 & \mbox{if }n=0\\ 
1 & \mbox{if }n>0\,.
\end{array}
\right.
\end{eqnarray*}

In summary, we have for the Leibniz cohomology of $\af$ over a field $\F$ of characteristic
$\ne 2$ with coefficients in a one-dimensional symmetric Leibniz bimodule that
\begin{eqnarray*}
\dim_\F\HL^n(\af,(F_\lambda)_s)=
\left\{
\begin{array}{cl}
1 & \mbox{if }\lambda=0\mbox{ and }n\mbox{ is arbitrary or if }\lambda=1\mbox{ and }n>0\\
0 & \hspace{3cm}\mbox{otherwise}\,.
\end{array}
\right.
\end{eqnarray*}

Next, let us assume that $\ch(\F)=2$. Then it follows from Proposition \ref{lescoadj} that
$\HR^0(\af)\cong\HCE^1(\af,\af^*)\cong\F^2$, $\HR^1(\af)\cong\HCE^2(\af,\af^*)
\cong\F$, and $\HR^n(\af)=0$ for every integer $n\ge 2$. Hence in the spectral sequence
of Theorem \ref{theorem_A}, we have only two non-zero columns, namely the $p=0$ and
the $p=1$ column. In the $p=0$ column, we have spaces $\F^2\otimes\HL^q(\af,M_s)
\cong\HL^q(\af,M_s)\oplus\HL^q(\af,M_s)$, while in the $p=1$ column, we have just
$\HL^q(\af,M_s)$ for every integer $q\geq 0$. Therefore, the spectral sequence degenerates
at the term $E_2$, and for every integer $n\geq 1$ we obtain that
$$
\HCE^n_\rel(\af,M)\cong\HL^n(\af,M_s)\oplus\HL^n(\af,M_s)\oplus\HL^{n-1}(\af,M_s)\,,
$$
and
$$
\HCE^0_\rel(\af,M)\cong E_2^{0,0}\cong\HL^0(\af,M_s)\oplus\HL^0(\af,M_s)\cong M^\af
\oplus M^\af\,.
$$
This, together with (\ref{leibnizses}), (\ref{leibnizreduc}), and induction yields the recursive
relation
\begin{equation}\label{fibonacci}
\HL^n(\af,M_s)\cong\HL^{n-1}(\af,M_s)\oplus\HL^{n-2}(\af,M_s)\hspace{.1cm}\mbox{for 
every integer }n\ge 2\,.
\end{equation}

As a consequence, we obtain for the Leibniz cohomology of $\af$ over a field $\F$ of characteristic
$2$ with coefficients in a one-dimensional symmetric Leibniz bimodule that
\begin{eqnarray*}
\dim_\F\HL^n(\af,(F_\lambda)_s)=
\left\{
\begin{array}{cl}
f_{n+1} & \mbox{if }\lambda=0\\
f_n & \mbox{if }\lambda=1\\
0 & \mbox{otherwise}
\end{array}
\right.
\end{eqnarray*}
for every non-negative integer $n$, where $f_n$ denotes the $n$-th term of the
standard Fibonacci sequence given by $f_0:=0$, $f_1:=1$, and $f_n:=f_{n-1}+f_{n-2}$
for every integer $n\geq 2$. In particular, we have that $$\HL^n(\af,(F_1)_s)\cong
\HL^{n-1}(\af,\F)\hspace{.1cm}\mbox{for every integer }n\ge 1\,.$$

Next, let us consider $F_\lambda$ as an anti-symmetric Leibniz $\af$-bimodule $(F_\lambda)_a$
with the same left $\af$-action as above and with the trivial right $\af$-action (see Section
\ref{prelim}). Then we conclude from Lemma \ref{antisym}\,(b) that $$\dim_\F\HL^0(\af,
(F_\lambda)_a)=1\mbox{ for every }\lambda\in\F\,.$$

Let us now compute $\HL^n(\af,(F_\lambda)_a)$ for any integer $n\ge 1$. It follows from
Lemma~\ref{antisym}\,(b) that
\begin{equation}\label{leibnizalt}
\HL^n(\af,(F_\lambda)_a)\cong\HL^{n-1}(\af,\Hom_\F(\af,F_\lambda)_s)\cong\HL^{n-1}
(\af,(\af^*\otimes F_\lambda)_s)\,.
\end{equation}
A straightforward computation shows that $$0\to F_\lambda\to\af^*\otimes F_\lambda\to
F_{\lambda-1}\to 0$$ is a short exact sequence of left $\af$-modules. Then we obtain from
the long exact cohomology sequence and another straightforward computation in the case
$\lambda=1$:
\begin{eqnarray*}
\dim_\F(\af^*\otimes F_\lambda)^\af=
\left\{
\begin{array}{cl}
1 & \mbox{if }\lambda=0\\ 
0 & \mbox{otherwise}\,,
\end{array}
\right.
\end{eqnarray*}
\begin{eqnarray*}
\dim_\F\HCE^1(\af,\af^*\otimes F_\lambda)=
\left\{
\begin{array}{cl}
2 & \mbox{if }\lambda=0\mbox{ and }\ch(\F)=2\\
1 & \mbox{if }\lambda=0,2\mbox{ and }\ch(\F)\ne 2\,,\\ 
0 & \hspace{1cm}\mbox{otherwise}
\end{array}
\right.
\end{eqnarray*}
and
\begin{eqnarray*}
\dim_\F\HCE^2(\af,\af^*\otimes F_\lambda)=
\left\{
\begin{array}{cl}
1 & \mbox{if }\lambda=0\mbox{ and }\ch(\F)=2\mbox{ or }\lambda=2
\mbox{ and }\ch(\F)\ne 2\\ 
0 & \hspace{3cm}\mbox{otherwise}\,.
\end{array}
\right.
\end{eqnarray*}

If $\ch(\F)\ne 2$, we conclude by applying (\ref{leibnizalt}) and (\ref{leibniz2}) to the
symmetric Leibniz $\af$-bimodule $M_s:=(\af^*\otimes F_\lambda)_s$ that
\begin{eqnarray*}
\dim_\F\HL^1(\af,(F_\lambda)_a)=
\left\{
\begin{array}{cl}
1 & \mbox{if }\lambda=0\\ 
0 & \mbox{otherwise}\,,
\end{array}
\right.
\end{eqnarray*}
and
\begin{eqnarray*}
\dim_\F\HL^3(\af,(F_\lambda)_a)=\dim_\F\HL^2(\af,(F_\lambda)_a)=
\left\{
\begin{array}{cl}
1 & \mbox{if }\lambda=0,2\\ 
0 & \mbox{otherwise}\,,
\end{array}
\right.
\end{eqnarray*}
Finally, we use (\ref{leibnizhigh}) to deduce for every integer $n\ge 2$:
\begin{eqnarray*}
\dim_\F\HL^n(\af,(F_\lambda)_a)=
\left\{
\begin{array}{cl}
1 & \mbox{if }\lambda=0,2\\ 
0 & \mbox{otherwise}\,.
\end{array}
\right.
\end{eqnarray*}

In summary, we have for the Leibniz cohomology of $\af$ over a field $\F$ of characteristic
$\ne 2$ with coefficients in a one-dimensional anti-symmetric Leibniz bimodule that
\begin{eqnarray*}
\dim_\F\HL^n(\af,(F_\lambda)_a)=
\left\{
\begin{array}{cl}
1 & \mbox{if }\lambda=0\mbox{ and }n\mbox{ is arbitrary or }\lambda=2\mbox{ and }n\ge 2\\
   & \mbox{or }n=0\mbox{ and }\lambda\mbox{ is arbitrary}\\
0 & \hspace{3cm}\mbox{otherwise}\,.
\end{array}
\right.
\end{eqnarray*}

If $\ch(\F)=2$, we obtain by applying (\ref{leibnizalt}) and (\ref{fibonacci}):
\begin{eqnarray*}
\dim_\F\HL^n(\af,(F_\lambda)_a)=
\left\{
\begin{array}{cl}
1 & \mbox{if }n=0\mbox{ and }\lambda\mbox{ is arbitrary}\\
f_{n+1} & \mbox{if }\lambda=0\mbox{ and }n\mbox{ is arbitrary}\\
0 & \hspace{1cm}\mbox{otherwise}\,.
\end{array}
\right.
\end{eqnarray*}
\vspace{.2cm}

\noindent {\bf Remark.} Since every invariant symmetric bilinear form on $\af$ is a multiple
of the Killing form, we have that $[S^2(\af)^*]^\af\cong\F$. On the other hand, from the
computations in Example A we obtain when $\ch(\F)=2$ that $$\HCE_\rel^0(\af,\F)\cong
\HR^0(\af)\cong\HCE^1(\af,\af^*)\cong\F^2\,.$$ This shows that, in general, $\HR^0
(\af)\not\cong[S^2(\af)^*]^\af$ and $\HCE_\rel^0(\af,\F)\not\cong[S^2(\af)^*]^\af$.

Moreover, the computations in Example A show that $\dim_\F\HL^3(\af,\F)=3$, but $\dim_\F
\HCE^2(\af,\af^*)\le 1$. Hence the fifth terms of the five-term exact sequences in Proposition
\ref{lescoadj} and in Proposition \ref{lesrelcoh} for $M:=\F$ are not necessarily isomorphic.
\vspace{.3cm}

Similar to Proposition \ref{vansupsolv}, we have the following general vanishing result:

\begin{pro}\label{vannilp}
Let $\gf$ be a finite-dimensional nilpotent Lie algebra, and let $M$ be a finite-dimensional
non-trivial irreducible Leibniz $\gf$-bimodule. Then $\HL^n(\gf,M)=0$ for every positive
integer $n$. Moreover, if $M$ is symmetric, then $\HL^n(\gf,M)=0$ for every non-negative
integer $n$.
\end{pro}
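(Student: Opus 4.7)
The plan is to mirror the strategy of Proposition~\ref{vansupsolv} closely: first dispatch the symmetric case via Theorem~\ref{vanlie}, then bootstrap the non-symmetric (necessarily anti-symmetric) case from the symmetric case using Lemma~\ref{antisym}\,(b) together with a filtration of $\gf^*$ coming from nilpotency.

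First suppose that $M$ is symmetric. Then Theorem~\ref{vanlie} reduces the claim to the vanishing $\HCE^\bullet(\gf,M)=0$, which for a finite-dimensional nilpotent Lie algebra $\gf$ and a finite-dimensional non-trivial irreducible $\gf$-module $M$ is a classical theorem of Dixmier in characteristic zero, with analogous statements available in prime characteristic. Citing the appropriate reference disposes of the symmetric case (and simultaneously yields $\HL^0(\gf,M)=M^\gf=0$).

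Now suppose $M$ is not symmetric. By \cite[Theorem~3.14]{F} it must be anti-symmetric, hence $M\cong N_a$ for a non-trivial irreducible left $\gf$-module $N$, and Lemma~\ref{antisym}\,(b) yields
$$\HL^n(\gf,M)\cong\HL^{n-1}(\gf,(\gf^*\otimes N)_s)$$
for every positive integer $n$, so it suffices to show that $\HL^\bullet(\gf,(\gf^*\otimes N)_s)=0$. Exploiting the nilpotency of $\gf$ via its lower central series $\gf=\gf^1\supset\gf^2\supset\cdots\supset\gf^{r+1}=0$, each quotient $\gf^i/\gf^{i+1}$ is a trivial $\gf$-module, so dualizing gives a filtration of $\gf^*$ by $\gf$-submodules whose successive quotients are trivial. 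Since any subspace of a trivial quotient is automatically a submodule, this filtration can be refined into a composition series of $\gf^*$ all of whose factors are one-dimensional trivial $\gf$-modules. Tensoring the resulting composition series with $N$ produces a filtration of $\gf^*\otimes N$ whose successive quotients are all isomorphic to $N$, and hence a filtration of $(\gf^*\otimes N)_s$ whose successive quotients are all isomorphic to the non-trivial irreducible symmetric Leibniz $\gf$-bimodule $N_s$. The symmetric case applied to $N$ gives $\HL^\bullet(\gf,N_s)=0$, so iterating the long exact Leibniz cohomology sequence exactly as at the end of the proof of Proposition~\ref{vansupsolv} forces $\HL^\bullet(\gf,(\gf^*\otimes N)_s)=0$.

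The main obstacle is the symmetric case: one needs the Chevalley-Eilenberg vanishing $\HCE^\bullet(\gf,M)=0$ for nilpotent $\gf$ and finite-dimensional non-trivial irreducible $M$ in arbitrary characteristic. Over fields of characteristic zero this is Dixmier's theorem, while in prime characteristic it should still follow (for instance by observing that some element of $Z(\gf)$ must act on an irreducible $M$ by a non-nilpotent operator and feeding this into a Hochschild-Serre reduction), but locating the cleanest reference valid in all characteristics is the delicate point of the argument.
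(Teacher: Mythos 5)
Your argument is essentially identical to the paper's proof: the symmetric case via Theorem~\ref{vanlie}, reduction of the anti-symmetric case through Lemma~\ref{antisym}\,(b) to $(\gf^*\otimes M)_s$, and induction along a composition series of the (dualized) adjoint module with trivial one-dimensional factors obtained by refining the descending central series. The only loose end you flag --- a characteristic-free reference for $\HCE^\bullet(\gf,M)=0$ when $\gf$ is nilpotent and $M$ is non-trivial irreducible --- is supplied in the paper by Barnes \cite[Lemma 3]{B1}, which holds over arbitrary fields, so there is no genuine gap.
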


\begin{proof}
If $M$ is symmetric, then the assertion is an immediate consequence of Theorem \ref{vanlie}
and \cite[Lemma 3]{B1}.

According to \cite[Theorem 3.14]{F}, we can suppose that $M$ is anti-symmetric. We obtain
from Lemma \ref{antisym}\,(b) that $$\HL^n(\gf,M)\cong\HL^{n-1}(\gf,\Hom_\F(\gf,M)_s)
\cong\HL^{n-1}(\gf,(\gf^*\otimes M)_s)$$ for every positive integer $n$. By refining the
descending central series of $\gf$, one can construct a composition series $$\gf_{\ad,\ell}=
\gf_n\supset\gf_{n-1}\supset\cdots\supset\gf_1\supset\gf_0=0$$ of the adjoint $\gf$-module
such that $\gf_j/\gf_{j-1}$ is the trivial one-dimensional $\gf$-module $\F$ for every integer
$1\le j\le n$. From the short exact sequences $$0\to\gf_{j-1}\to\gf_j\to\F\to 0\,,$$ we obtain
by dualizing, tensoring each term with $M$, and symmetrizing the short exact sequences:
$$0\to M_s\to(\gf_j^*\otimes M)_s\to(\gf_{j-1}^*\otimes M)_s\to 0$$ for every integer
$1\le j\le n$. Since $M$ is a non-trivial irreducible left $\gf$-module, we conclude that $M_s$
is a non-trivial irreducible symmetric Leibniz $\gf$-bimodule. Hence we obtain inductively from
the long exact cohomology sequence that $\HL^n(\gf,M)\cong\HL^{n-1}(\gf,(\gf^*\otimes M)_s)
=0$ for every positive integer $n$.
\end{proof}

Since the Leibniz cohomology of an abelian Lie algebra with trivial coefficients is known,
in Example B we compute this cohomology for the smallest non-abelian nilpotent Lie algebra.
Note that in \cite[Example 1.4.\,iv)]{P} the corresponding Leibniz homology has been
computed. In fact, homology and cohomology of a finite-dimensional Leibniz algebra $\lf$
with {\it trivial} coefficients are isomorphic, as we have the duality isomorphism $\CL_{\bullet}
(\lf,\F)^*\cong\CL^{\bullet}(\lf,\F)$ already on the level of cochain complexes. Therefore
our results coincide with those of Pirashvili. We furthermore compute in Example C the
Leibniz cohomology of the smallest nilpotent non-Lie Leibniz algebra with coefficients in
the trivial Leibniz bimodule.
\vspace{.3cm}

\noindent {\bf Example B.} Let $\F$ denote an arbitrary field of characteristic $\ne 2$,
and let $\hf:=\F x\oplus\F y\oplus\F z$ be the three-dimensional Heisenberg algebra
over $\F$ with multiplication determined by $xy=z=-yx$. Then the Chevalley-Eilenberg
cohomology of $\hf$ with coefficients in the trivial module $\F$ is well-known:
\begin{eqnarray*}
\dim_\F\HCE^n(\hf,\F)=
\left\{
\begin{array}{cl}
1 & \mbox{if }n=0,3\\ 
2 & \mbox{if }n=1,2\,.\\
0 & \mbox{if }n\ge 4
\end{array}
\right.
\end{eqnarray*}
Consequently, we have that $\dim_\F\HL^0(\hf,\F)=1$ and $\dim_\F\HL^1(\hf,\F)=2$.

As $\HCE^n(\hf,\F)=0$ for every integer $n\ge 4$, we obtain from Proposition
\ref{lesrelcoh} the following six-term exact sequence:
$$0\to\HCE^2(\hf,\F)\to\HL^2(\hf,\F)\to\HCE_\rel^0(\hf,\F)\to\HCE^3(\hf,\F)\to
\HL^3(\hf,\F)\to\HCE_\rel^1(\hf,\F)\to 0$$ and $$\HL^n(\hf,\F)\cong\HCE_\rel^{n-2}
(\hf,\F)\hspace{.1cm}\mbox{for every integer }n\ge 4\,.$$

Since we assume that $\ch(\F)\ne 2$, it follows from the remark after Proposition~\ref{lesrelcoh}
that we can identify $\HCE_\rel^0(\hf,\F)$ with the space of invariant symmetric bilinear
forms on $\hf$ and the map $\HCE_\rel^0(\hf,\F)\to\HCE^3(\hf,\F)$ with the classical
Cartan-Koszul map. It is easy to see that the latter map is zero for the Heisenberg algebra,
which yields the surjectivity of the map $\HL^2(\hf,\F)\to\HCE_\rel^0(\hf,\F)$ and the
injectivity of the map $\HCE^3(\hf,\F)\to\HL^3(\hf,\F)$. As a consequence, we obtain
the following two short exact sequences: $$0\to\HCE^2(\hf,\F)\to\HL^2(\hf,\F)\to
\HCE_\rel^0(\hf,\F)\to 0\,,$$ $$0\to\HCE^3(\hf,\F)\to\HL^3(\hf,\F)\to\HCE_\rel^1
(\hf,\F)\to 0\,.$$

In order to compute $\HCE_\rel^0(\hf,\F)$ and $\HCE_\rel^1(\hf,\F)$, we need the
coadjoint Chevalley-Eilenberg cohomology of $\hf$. We have that $\dim_\F\HCE^0
(\hf,\hf^*)=2$, $\dim_\F\HCE^1(\hf,\hf^*)=5$, $\dim_\F\HCE^2(\hf,\hf^*)=4$,
and $\HCE^3(\hf,\hf^*)=1$. These dimensions can be computed directly, but for
the complex numbers as a ground field it also follows from the main result of \cite{M}
in conjunction with Poincar\'e duality (for the latter see \cite[Theorem 3.4]{W}).

Similar to the discussion of the consequences of Proposition \ref{lesrelcoh} above, we
obtain from Proposition \ref{lescoadj} the two short exact sequences $$0\to\HCE^2
(\hf,\F)\to\HCE^1(\hf,\hf^*)\to\HR^0(\hf)\to 0\,,$$ $$0\to\HCE^3(\hf,\F)\to\HCE^2
(\hf,\hf^*)\to\HR^1(\hf)\to 0\,,$$ the isomorphism $\HR^2(\hf)\cong\HCE^3(\hf,
\hf^*)$, and $\HR^n(\hf)=0$ for every integer $n\ge 3$.

From these two short exact sequences and the isomorphism we derive that $$\dim_\F
\HR^0(\hf)=\dim_\F\HCE^1(\hf,\hf^*)-\dim_\F\HCE^2(\hf,\F)=5-2=3\,,$$ $$\dim_\F
\HR^1(\hf)=\dim_\F\HCE^2(\hf,\hf^*)-\dim_\F\HCE^3(\hf,\F)=4-1=3\,,$$ and
$$\dim_\F\HR^2(\hf)=\dim_\F\HCE^3(\hf,\hf^*)=1\,,$$ respectively. Therefore we
obtain from $\HCE_\rel^0(\hf,\F)\cong\HR^0(\hf)$ that $$\dim_\F\HL^2(\hf,\F)=
\dim_\F\HCE^2(\hf,\F)+\dim_\F\HCE_\rel^0(\hf,\F)=2+3=5\,.$$

Now we want to apply the spectral sequence of Theorem \ref{theorem_A}. For this let us
compute the differential $$\dl_2^{0,1}:E_2^{0,1}=\HR^0(\hf)\otimes\HL^1(\hf,\F)\to
E_2^{2,0}=\HR^2(\hf)\otimes\HL^0(\hf,\F)\,.$$ In characteristic $\not=2$, an element
of $\HR^0(\hf)$ is an invariant symmetric bilinear form $\omega$. It is considered as a
$1$-cochain with values in $\hf^*$ and, as it is a representative of an element of a quotient
cochain complex, it is zero in case it is skew-symmetric in all entries. Take furthermore a
cocycle $c\in \CL^1(\hf,\F)$ and compute for three elements $r,s,t\in\hf$:
\begin{eqnarray*}
\dl^1(\omega\otimes c)(r,s,t)&=&\omega(rs,-)c(t)+\omega(s,-)c(rt)-\omega(r,-)c(st)+\\
&+&\omega(s,r-)c(t)-\omega(r,s-)c(t)+\omega(r,t-)c(s)
\end{eqnarray*}
Now as $c$ is a cocycle with trivial coefficients, $c$ vanishes on products, thus the second
and third terms are zero. Furthermore, the first and fourth term cancel by the invariance
of the form and skew-symmetry of the Lie product. We are left with the two last terms
$-\omega(r,s-)c(t)+\omega(r,t-)c(s)$, which are skew-symmetric in the three entries
of the element in $\HR^2(\hf)$ and vanish therefore as well. In conclusion, the differential
$\dl_2^{0,1}$ is zero, and we have that $$\HCE^1_{\rm rel}(\hf)=\HR^0(\hf)\otimes\HL^1
(\hf,\F)\oplus\HR^1(\hf)\otimes\HL^0(\hf,\F)\,.$$ This implies in turn
$$\dim_\F\HL^3(\hf,\F)=\dim_\F\HCE^3(\hf,\F)+\dim_\F\HCE_\rel^1(\hf,\F)=1+9=10\,.$$

It seems that all differentials $\dl_2$ are zero and thus that this scheme persists to yield 
the dimensions of the higher $\HCE_\rel^n(\hf,\F)$ and thus of $\HL^n(\hf,\F)$ (see the
dimension formula in \cite[Example 1.4\,iv)]{P}). 
\vspace{.2cm}

\noindent {\bf Remark.} As a by-product of the above computations we obtain
that the space $[S^2(\hf)^*]^\hf$ of invariant symmetric bilinear forms on $\hf$
is three-dimensional when $\ch(\F)\ne 2$.
\vspace{.3cm}

We proceed by proving an extension of a result by Fialowski, Magnin, and Mandal (see
Corollary 2 in \cite{FMM}), namely, the fact that the vanishing of the center $C(\gf)$ of
a Lie algebra $\gf$ implies $\HL^2(\gf,\gf_\ad)=\HCE^2(\gf,\gf)$, where $\gf_\ad$
denotes the adjoint Leibniz $\gf$-bimodule induced by the left and right multiplication
operator. Note that for Lie algebras, this bimodule is indeed symmetric.

Moreover, we observe that $\HCE^0(\gf,\gf)=C(\gf)$. Therefore, it is an immediate
consequence of the case $n=0$ of Theorem \ref{vanlie} that the vanishing of the center
implies $\HL^2(\gf,\gf_\ad)=\HCE^2(\gf,\gf)$. By the same token for $n=1$, we can
extend this result to complete Lie algebras, i.e., to those Lie algebras $\gf$ for which
$\HCE^0(\gf,\gf)=\HCE^1(\gf,\gf)=0$:

\begin{cor}\label{complet}
Let $\gf$ be a complete Lie algebra. Then $$\HL^2(\gf,\gf_\ad)\cong\HCE^2(\gf,\gf)
\,\,\,{\rm and}\,\,\,\HL^3(\gf,\gf_\ad)\cong\HCE^3(\gf,\gf)\,.$$
\end{cor}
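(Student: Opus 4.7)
The plan is to apply Theorem \ref{vanlie} with the left $\gf$-module $M := \gf$ (the adjoint representation) and $n := 1$. This requires only verifying that the Chevalley-Eilenberg hypotheses of that theorem are satisfied for a complete Lie algebra.

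First I would recall, as the paragraph preceding the statement already does, that for a Lie algebra $\gf$ the adjoint bimodule $\gf_\ad$ is symmetric, so $\gf_\ad = \gf_s$ in the notation of Section \ref{prelim}. This is the only point where one uses that $\gf$ is a Lie algebra rather than a genuine Leibniz algebra; it allows the machinery of Section \ref{cel}, which pertains to symmetric bimodules over Lie algebras, to be applied to $\gf_\ad$.

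Next I would translate the completeness hypothesis into the vanishing statement needed as input. By definition, $\gf$ is complete exactly when $\HCE^0(\gf,\gf) = C(\gf) = 0$ and $\HCE^1(\gf,\gf) = \der(\gf)/\ider(\gf) = 0$. Thus $\HCE^k(\gf,\gf) = 0$ for every integer $k$ with $0 \le k \le 1$, which is precisely the hypothesis of Theorem~\ref{vanlie} with $n = 1$ and $M = \gf$.

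Finally, the conclusion of Theorem \ref{vanlie} for $n = 1$ gives the two isomorphisms
\[
\HL^{n+1}(\gf,\gf_s) \cong \HCE^{n+1}(\gf,\gf) \quad \text{and} \quad \HL^{n+2}(\gf,\gf_s) \cong \HCE^{n+2}(\gf,\gf),
\]
i.e.\ $\HL^2(\gf,\gf_\ad) \cong \HCE^2(\gf,\gf)$ and $\HL^3(\gf,\gf_\ad) \cong \HCE^3(\gf,\gf)$, as required. There is no real obstacle here; the entire content of the corollary is that completeness supplies the two-step vanishing input demanded by Theorem \ref{vanlie}, and the $n = 0$ version recorded in the preceding paragraph (which recovers the Fialowski-Magnin-Mandal result from $C(\gf) = 0$ alone) makes clear that extending by one degree corresponds exactly to passing from centerless to complete.
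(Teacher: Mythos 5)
Your proposal is correct and follows exactly the paper's own argument: the paper likewise notes that the adjoint bimodule of a Lie algebra is symmetric and that completeness means $\HCE^0(\gf,\gf)=\HCE^1(\gf,\gf)=0$, then invokes the case $n=1$ of Theorem \ref{vanlie}. Nothing is missing.
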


A class of examples of complete Lie algebras over an algebraically closed field $\F$ of
characteristic zero consists of those finite-dimensional Lie algebras $\gf$ for which $\gf$
has the same dimension as its Lie algebra of derivations and $\dim_\F\gf/\gf^2>1$ (see
\cite[Proposition 3.1]{Car}). Another example is the two-sided Witt algebra over a field
of characteristic zero. Indeed, this infinite-dimensional simple Lie algebra is complete (see
\cite[Theorem~A.1.1]{ES}). Hence we obtain from \cite[Theorem 3.1]{S} and \cite[Theorem
4.1]{ES} in conjunction with the case $n=3$ of Theorem \ref{vanlie} the following result:

\begin{cor}\label{witt}
Let $\W:=\der(\F[t,t^{-1}])$ be the two-sided Witt algebra over a field $\F$ of characteristic
zero. Then $\HL^2(\W,\W_\ad)=0$ and $\HL^3(\W,\W_\ad)=0$. Moreover, $$\HL^4(\W,
\W_\ad)\cong\HCE^4(\W,\W)\,\,\,{\rm and}\,\,\,\HL^5(\W,\W_\ad)\cong\HCE^5(\W,\W)\,.$$
\end{cor}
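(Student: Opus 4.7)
The plan is to derive everything directly from Theorem \ref{vanlie} applied with $n=3$, so the real work is just to assemble four vanishing statements for the Chevalley-Eilenberg cohomology of $\W$ with adjoint coefficients. Since $\W$ is a Lie algebra, the adjoint bimodule $\W_\ad$ coincides with the symmetric Leibniz bimodule $\W_s$ attached to the left $\W$-module $\W$, which is what is needed to feed Theorem \ref{vanlie}.

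First I would record that $\HCE^0(\W,\W)=C(\W)=0$, which holds since $\W$ is simple (or equivalently because $\W$ is complete, a fact cited from \cite[Theorem A.1.1]{ES}). Next, $\HCE^1(\W,\W)=\der(\W)/\ider(\W)=0$ is exactly the content of completeness of $\W$, again from \cite[Theorem A.1.1]{ES}. The vanishing $\HCE^2(\W,\W)=0$ is the classical rigidity of the Witt algebra, provided by \cite[Theorem 3.1]{S}, and $\HCE^3(\W,\W)=0$ is the content of \cite[Theorem 4.1]{ES}. These are the two inputs the excerpt explicitly flags just before the statement of the corollary.

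With the vanishing $\HCE^k(\W,\W)=0$ for $0\le k\le 3$ in hand, I apply Theorem \ref{vanlie} to the Lie algebra $\gf:=\W$, the module $M:=\W$, and $n:=3$. The theorem then yields $\HL^k(\W,\W_\ad)=0$ for every $0\le k\le 3$, which in particular gives the first two claims $\HL^2(\W,\W_\ad)=0$ and $\HL^3(\W,\W_\ad)=0$. The same theorem further produces the isomorphisms $\HL^{n+1}(\W,\W_\ad)\cong\HCE^{n+1}(\W,\W)$ and $\HL^{n+2}(\W,\W_\ad)\cong\HCE^{n+2}(\W,\W)$ for $n=3$, which are precisely the remaining two assertions.

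There is no genuine obstacle here beyond trusting the four external vanishing results; the whole argument is a one-line invocation of Theorem \ref{vanlie} once those are cited. If anything requires care, it is merely noting that Theorem \ref{vanlie} is applicable despite $\W$ being infinite-dimensional (the theorem is stated for arbitrary Lie algebras and modules, so no finiteness hypothesis is needed) and that the symmetric-bimodule interpretation of $\W_\ad$ is automatic for Lie algebras, as remarked in the paragraph preceding Corollary \ref{complet}.
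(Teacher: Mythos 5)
Your proof is correct and follows exactly the paper's route: the paper likewise combines the completeness of $\W$ (giving $\HCE^0(\W,\W)=\HCE^1(\W,\W)=0$), the vanishing of $\HCE^2(\W,\W)$ from \cite[Theorem 3.1]{S} and of $\HCE^3(\W,\W)$ from \cite[Theorem 4.1]{ES}, and then invokes the case $n=3$ of Theorem \ref{vanlie}. Your added remark that Theorem \ref{vanlie} imposes no finite-dimensionality hypothesis is a correct and worthwhile observation.
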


\noindent {\bf Remark.} Very recently, Camacho, Omirov, and Kurbanbaev also proved that
the second adjoint Leibniz cohomology of $\W$ vanishes (see \cite[Theorem 4]{COK}) by
explicitly showing that every adjoint Leibniz 2-cocycle (resp.\ Leibniz 2-coboundary) is an
adjoint Chevalley-Eilenberg 2-cocycle (resp.\ Chevalley-Eilenberg 2-coboundary) for $\W$.
\vspace{.3cm}

We conclude this section with another application of Theorem \ref{vanlie}. Let $\F$ be an
algebraically closed field of characteristic zero, let $n$ be a non-negative integer, and let
$L_n(\F)\subseteq\F^{n^3}$ denote the affine variety of structure constants of the
$n$-dimensional left Leibniz algebras over $\F$ with respect to a fixed basis of $\F^n$. Then
the general linear group $\GL_n(\F)$ acts on $L_n(\F)$, and a point (= Leibniz multiplication
law) $\phi\in L_n(\F)$ is called {\em rigid\/} if the orbit $\GL_n(\F)\cdot\phi$ is open in $L_n
(\F)$. It follows from Corollary \ref{witt} in conjunction with \cite[Th\'eor\`eme 3]{Ba} that
the infinite-dimensional two-sided Witt algebra over an algebraically closed field of characteristic
zero is rigid as a Leibniz algebra.

It is well known that the Chevalley-Eilenberg cohomology of the non-abelian two-dimensional
Lie algebra with coefficients in the adjoint module vanishes. According to Theorem \ref{vanlie},
this implies that the corresponding Leibniz cohomology vanishes as well. Note that the non-abelian
two-dimensional Lie algebra is the standard Borel subalgebra of the split three-dimensional simple
Lie algebra $\slf_2$.

Similarly, by applying Theorem \ref{vanlie} in conjunction with \cite[Theorem 1]{T} (see also
\cite[Section 1]{LL2}) one obtains the following more general result in characteristic zero
(cf.\ also \cite[Proposition 2.3]{P} for the rigidity of parabolic subalgebras). Recall that a
subalgebra of a semi-simple Lie algebra $\gf$ is called {\em parabolic\/} if it contains a maximal
solvable (= Borel) subalgebra of $\gf$.

\begin{pro}\label{borel}
Let $\bbf$ be a parabolic subalgebra of a finite-dimensional semi-simple Lie algebra over a field
of characteristic zero. Then $\HL^n(\bbf,\bbf_\ad)=0$ for every non-negative integer $n$. In
particular, parabolic subalgebras of a finite-dimensional semi-simple Lie algebra over an algebraically
closed field of characteristic zero are rigid as Leibniz algebras.
\end{pro}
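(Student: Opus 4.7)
The plan is to reduce the Leibniz statement to a vanishing statement for Chevalley--Eilenberg cohomology by invoking Theorem \ref{vanlie}. Since $\bbf$ is itself a Lie algebra, the adjoint Leibniz $\bbf$-bimodule $\bbf_\ad$ is symmetric and coincides with the symmetrization $M_s$, where $M$ denotes $\bbf$ regarded as the adjoint left $\bbf$-module via the bracket. So we are precisely in the setting of Theorem \ref{vanlie} with $M:=\bbf$.

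Next, I would appeal to the cited result \cite[Theorem~1]{T} (compare \cite[Section~1]{LL2}), which asserts that for every parabolic subalgebra $\bbf$ of a finite-dimensional semi-simple Lie algebra over a field of characteristic zero one has $\HCE^k(\bbf,\bbf)=0$ for every non-negative integer $k$. Feeding this total vanishing into Theorem \ref{vanlie} produces $\HL^k(\bbf,\bbf_\ad)=0$ for every non-negative integer $k$, which is the main assertion of the proposition.

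For the rigidity statement, the $k=2$ case just established, combined with \cite[Th\'eor\`eme~3]{Ba}, yields that the structure constants of $\bbf$ define a point whose $\GL_n(\F)$-orbit is open in $L_n(\F)$, in exact parallel with the argument preceding Proposition~\ref{borel} that deduced the rigidity of the two-sided Witt algebra from Corollary \ref{witt}. The main obstacle is purely bibliographic rather than mathematical: one needs to verify that the cited vanishing from \cite{T} genuinely covers every cohomological degree and every parabolic (not just Borel) subalgebra, after which the proof reduces to two citations chained by Theorem \ref{vanlie}.
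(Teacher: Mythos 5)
Your proposal is correct and follows exactly the paper's argument: the paper deduces Proposition~\ref{borel} by feeding the vanishing of $\HCE^\bullet(\bbf,\bbf)$ from \cite[Theorem 1]{T} into Theorem~\ref{vanlie}, and obtains rigidity from \cite[Th\'eor\`eme 3]{Ba} just as in the Witt algebra discussion. Your observation that $\bbf_\ad$ is the symmetric bimodule $M_s$ for $M=\bbf$ (since $\bbf$ is a Lie algebra) is the same reduction the paper makes implicitly, so there is nothing further to add.
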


\noindent {\bf Remark.} It would be interesting to know whether Proposition \ref{borel} remains
valid in prime characteristic.
\vspace{.2cm}


\section{A Hochschild-Serre type spectral sequence for Leibniz cohomology}\label{specseq}


In this section we consider a Leibniz analogue of the Hochschild-Serre spectral sequence
for the Chevalley-Eilenberg cohomology of Lie algebras that converges to some relative
cohomology. It will play a predominant role in Section~\ref{semsim}. The homology
version of this spectral sequence with values in symmetric bimodules is due to Pirashvili
(see \cite[Theorem C]{P}). Our arguments follow Pirashvili very closely, but we include
all the details as it turns out that the spectral sequence holds more generally for arbitrary
bimodules.

Let $\pi:\lf\to\qf$ be an epimorphism of left Leibniz algebras, and let $M$ be a $\qf$-bimodule.
Then $M$ is also an $\lf$-bimodule via $\pi$. Moreover, the epimorphisms $\pi^{\otimes n}:
\lf^{\otimes n}\to\qf^{\otimes n}$ induce a monomorphism $\CL^\bullet(\qf,M)\to\CL^\bullet
(\lf,M)$ of cochain complexes. Now set $$\CL^\bullet(\lf\vert\qf,M):=\coker(\CL^\bullet
(\qf,M)\to\CL^\bullet(\lf,M))[-1]$$ and $$\HL^\bullet(\lf\vert\qf,M):=\HCE^\bullet(\CL^\bullet
(\lf\vert\qf,M))\,.$$ Then by applying the long exact cohomology sequence to the short exact
sequence $$0\to\CL^\bullet(\qf,M)\to\CL^\bullet(\lf,M)\to\CL^\bullet(\lf\vert\qf,M)[1]\to 0$$
of cochain complexes one obtains the following result (see also \cite[Proposition 4.1]{P} for the
corresponding result for Leibniz homology).

\begin{pro}\label{les}
For every epimorphism $\pi:\lf\to\qf$ of left Leibniz algebras and every $\qf$-bimodule $M$
there exists a long exact sequence
\begin{eqnarray*}
0 & \to & \HL^1(\qf,M)\to\HL^1(\lf,M)\to\HL^0(\lf\vert\qf,M)\\
& \to & \HL^2(\qf,M)\to\HL^2(\lf,M)\to\HL^1(\lf\vert\qf,M)\to\cdots
\end{eqnarray*}
\end{pro}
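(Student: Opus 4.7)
The approach is to do exactly what the author telegraphs: construct the short exact sequence of cochain complexes displayed just before the statement, and then read off the long exact cohomology sequence. So the plan is to verify that the arrow $\CL^\bullet(\qf,M)\to\CL^\bullet(\lf,M)$, defined by precomposition with $\pi^{\otimes n}$, is (a) injective in each degree, (b) compatible with the Leibniz coboundary $\dl^\bullet$, and then (c) to identify its cokernel, up to the indicated degree shift, with $\CL^\bullet(\lf\vert\qf,M)$.

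First I would note that since $\pi$ is surjective, each tensor power $\pi^{\otimes n}:\lf^{\otimes n}\to\qf^{\otimes n}$ is surjective, so applying the left exact functor $\Hom_\F(-,M)$ gives injections $\pi^*:\CL^n(\qf,M)\hookrightarrow\CL^n(\lf,M)$. Compatibility with $\dl^\bullet$ is the routine check: because $\pi(xy)=\pi(x)\pi(y)$ and the $\lf$-action on $M$ is defined by $x\cdot m:=\pi(x)\cdot m$ and $m\cdot x:=m\cdot\pi(x)$, every term of the formula for $(\dl^n\pi^*f)(x_1,\dots,x_{n+1})$ is literally $(\dl^n f)(\pi(x_1),\dots,\pi(x_{n+1}))$, so $\dl^n\circ\pi^*=\pi^*\circ\dl^n$. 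By the very definition $\CL^\bullet(\lf\vert\qf,M):=\coker(\pi^*)[-1]$, this yields the short exact sequence of cochain complexes
\begin{equation*}
0\to\CL^\bullet(\qf,M)\to\CL^\bullet(\lf,M)\to\CL^\bullet(\lf\vert\qf,M)[1]\to 0
\end{equation*}
displayed in the text.

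The zig-zag lemma applied to this short exact sequence produces a long exact sequence in cohomology, and using the degree-shift convention $H^n(C[1])=H^{n-1}(C)$ built into the definition, the middle terms become $\HL^{n-1}(\lf\vert\qf,M)$, which yields the sequence in the statement. The only remaining point is why the sequence begins at $\HL^1(\qf,M)$ rather than $\HL^0(\qf,M)$; this is because the map $\HL^0(\qf,M)=M^\qf\to M^\lf=\HL^0(\lf,M)$ is an isomorphism: an element $m\in M$ satisfies $m\cdot x=0$ for all $x\in\lf$ iff $m\cdot\pi(x)=0$ for all $x\in\lf$, and surjectivity of $\pi$ makes this equivalent to $m\cdot y=0$ for all $y\in\qf$. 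Combined with $\HL^{-1}(\lf\vert\qf,M)=0$, this forces the long exact sequence to start as claimed.

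I do not expect any real obstacle here; the proof is essentially bookkeeping. The only spot requiring mild care is the signs/shifts: one must match the convention on $[1]$ with the definition $\coker[-1]$ so that the connecting homomorphisms land in the correct degrees, which is what produces the pattern $\HL^n(\lf,M)\to\HL^{n-1}(\lf\vert\qf,M)\to\HL^{n+1}(\qf,M)$ rather than the naive one.
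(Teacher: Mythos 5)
Your proposal is correct and follows exactly the route the paper takes: the statement is obtained by applying the long exact cohomology sequence to the short exact sequence $0\to\CL^\bullet(\qf,M)\to\CL^\bullet(\lf,M)\to\CL^\bullet(\lf\vert\qf,M)[1]\to 0$ displayed just before the proposition. Your additional verifications (injectivity of $\pi^*$ from surjectivity of $\pi^{\otimes n}$, compatibility with $\dl^\bullet$, the vanishing of $\HL^{-1}(\lf\vert\qf,M)$, and the isomorphism $M^\qf\cong M^\lf$ explaining why the sequence starts at $\HL^1$) are exactly the bookkeeping the paper leaves implicit.
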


Let us now derive Pirashvili's analogue of the Hochschild-Serre spectral sequence for Leibniz
cohomology (see \cite[Theorem C]{P} for the homology version). While Pirashvili considers
only symmetric bimodules, we extend the dual of his spectral sequence to arbitrary bimodules.
The construction of this spectral sequence is very similar to the construction of the spectral
sequence in Theorem \ref{theorem_A}.
  
We consider the following filtration on the cochain complex $\CL^\bullet(\lf,M)[-1]$. 
$${\mathcal F}^p\CL^n(\lf,M)[-1]:=\{c\in\CL^{n+1}(\lf,M)\mid c(x_1,\ldots,x_{n+1})=0\mbox{ if }
\exists\,i\leq p:\,x_i\in \If\}\,.$$
This defines a finite decreasing filtration
\begin{multline} \label{finite_filtration_2}
{\mathcal F}^0\CL^n(\lf,M)[-1]=\CL^n(\lf,M)[-1]\supset{\mathcal F}^1\CL^n(\lf,M)[-1]\supset\cdots\\
\cdots\supset{\mathcal F}^{n+1}\CL^n(\lf,M)[-1]=\CL^n(\qf,M)[-1]\,,
\end{multline}
Then we have the following result:
  
\begin{lem}
This filtration is compatible with the Leibniz coboundary map $\dl^\bullet$. 
\end{lem}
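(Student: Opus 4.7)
The plan is a direct verification by unpacking the Leibniz coboundary formula term by term. I take an arbitrary cochain $c\in\mathcal{F}^p\CL^n(\lf,M)[-1]$ and a tuple $(x_1,\ldots,x_{n+2})\in\lf^{n+2}$ in which some $x_k$ lies in $\If$ with $k$ in the filtration's distinguished range, and I aim to show that every summand of $(\dl^{n+1}c)(x_1,\ldots,x_{n+2})$ vanishes. Two facts drive the argument: since $M$ is a $\qf$-bimodule pulled back along $\pi$, the ideal $\If=\Ker(\pi)$ annihilates $M$ on both sides; and since $\If$ is a two-sided ideal, both $x_k\cdot y$ and $y\cdot x_k$ remain in $\If$ for every $y\in\lf$.

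I will split the coboundary into its three usual pieces. The left-action term with $i=k$ vanishes immediately because $x_k$ acts as zero on $M$; every other left-action term and the right-action term are killed by the defining vanishing property of $c$, once one checks that after the contracted slot is removed the element $x_k$ still occupies a position inside the distinguished range of $c$'s argument list. For the right-action term this uses $k\leq p\leq n+1<n+2$, which prevents $k$ from being the right-acting index.

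The remaining work lies in the product terms $(-1)^i c(\ldots,\hat{x_i},\ldots,x_ix_j,\ldots)$. I split into the cases $k\notin\{i,j\}$, $k=j$, and $k=i$. In the first case $x_k$ is untouched and the defining vanishing property of $c$ disposes of the term; in the second case $x_ix_k\in\If$ (by the ideal property) sits in the slot formerly occupied by $x_k$, again forcing $c$ to vanish. The subtle case is $k=i$, in which $x_k$ is consumed into the product $x_kx_j\in\If$ which is then deposited at the slot formerly occupied by $x_j$; the key point is to track the position of this new $\If$-element carefully and, when it lies outside the distinguished range, to combine the term with a neighboring product summand so that the pair is visibly in $\mathcal{F}^p$. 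I expect the delicate bookkeeping in this last sub-case, parallel in spirit to the pairing trick employed in the proof of Lemma~\ref{lemma_filtration_compatible}, to be the main technical obstacle. Once every summand is accounted for, the inclusion $\dl^\bullet(\mathcal{F}^p)\subseteq\mathcal{F}^p$ follows, establishing compatibility.
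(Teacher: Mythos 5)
Your overall decomposition into action terms and product terms is the same as the paper's, and your treatment of the action terms, of the right-action term, and of the product terms with $k\notin\{i,j\}$ or $k=j$ is correct. The gap is in the case $k=i$, which you yourself flag as the main obstacle and propose to resolve by combining the term with a neighboring product summand, ``parallel in spirit'' to the cancellation used for Lemma~\ref{lemma_filtration_compatible}. That trick is not available here: the cancellation in Lemma~\ref{lemma_filtration_compatible} exploits the presence of two \emph{equal} arguments, which produce a pair of product terms that cancel or combine into a symmetric tensor, whereas in the present filtration there is a single distinguished argument $x_k\in\If$, and the offending term $(-1)^{k}c(x_1,\dots,\hat{x}_k,\dots,x_kx_j,\dots,x_{n+2})$ with $j\geq p+1$ deposits the element $x_kx_j\in\If$ in a slot beyond the first $p$ positions, where the defining vanishing property of $c$ gives no information and no companion summand is present to cancel it.

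What actually kills this term is the standing hypothesis of Theorem~\ref{hs1} that $\If\subseteq C_\ell(\lf)$: since $x_k$ lies in the left center, $x_kx_j=0$ for every $x_j\in\lf$, so the term vanishes outright by multilinearity of $c$. This hypothesis never appears in your proposal --- your two stated driving facts, that $\If=\Ker(\pi)$ annihilates $M$ on both sides and that $\If$ is a two-sided ideal, do not suffice --- so this is a genuine missing idea rather than deferred bookkeeping. Once $\If\subseteq C_\ell(\lf)$ is invoked for the case $k=i$, the remaining cases go through exactly as you describe and the argument coincides with the paper's proof.
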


\begin{proof}
We have to prove that $\dl^\bullet({\mathcal F}^p\CL^n(\lf,M)[-1])\subseteq{\mathcal F}^p
\CL^{n+1}(\lf,M)[-1]$. For this, we consider the different terms $\dl_{ij}(c)$, $\delta_i(c)$,
and $\partial(c)$, which constitute $\dl_0(c)$, where we have inserted an element of
$\If$ within the first $p$ arguments. The vanishing is clear for the terms $\dl_{ij}(c)$ with $i,j
\leq p$, because even if the element of $\If$ occurs in the product, the product will again be in
the ideal $\If$. The vanishing is also clear for the terms $\dl_{ij}(c)$ with $i,j\geq p+1$.
Concerning the terms $\dl_{ij}(c)$ with $i\leq p$ and $j\geq p+1$, we use the condition $\If
\subseteq C_\ell(\lf)$ to conclude that these are zero.

The action terms follow a similar pattern. The terms $\delta_i(c)$ with $i\leq p$ vanish, because
either the element of $\If$ occurs in the arguments, or it acts on $M$, which is zero by assumption.
The terms $\delta_i(c)$ with $i\geq p+1$ are zero for elementary reasons, as is the term $\partial(c)$.
\end{proof}

Thanks to (\ref{finite_filtration_2}), the associated spectral sequence converges 
in the strong (i.e., finite) sense to the cohomology $\HL^n(\lf|\qf,M)$ of the quotient cochain
complex $\CL^n(\lf,M)[-1]\,/CL^n(\qf,M)[-1]$, and we get for the 0-th term of the spectral
sequence
$$E_0^{p,q}=\Hom_\F(\qf^p\otimes\lf^{q+1},M)\,/\,\Hom_\F(\qf^{p+1}\otimes\lf^q,M)\,
\cong\,\Hom_\F(\qf^p\otimes\If\otimes\lf^q,M)\,,$$ where the isomorphism is induced by the
inclusion $\If\hookrightarrow\lf$. Since $\Hom_\F$ and $\otimes$ are adjoint functors, we obtain that
$$E_0^{p,q}=\Hom_\F(\qf^p\otimes\If,\Hom_\F(\lf^q,M))=\Hom_\F(\qf^p\otimes\If,\CL^q(\lf,M))\,.$$

Next, we will determine the differential on $E_0^{p,q}$:

\begin{lem}\label{lemma_3_5}
The differential $\dl_0$ on $E_0^{p,q}\cong\Hom_\F(\qf^p\otimes\If,\CL^q(\lf,M))$ is
induced by the coboundary operator $\dl^\bullet$ on $\CL^q(\lf,M)$. More precisely, we
have that $$d_0^{p,q}(f):=\dl^q_{\CL^{q}(\lf,M)}\circ f$$ for every linear transformation
$f\in\Hom_\F(\qf^p\otimes\If,\CL^q(\lf,M)))$.
\end{lem}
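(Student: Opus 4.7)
The plan is to compute the induced differential $\dl_0$ on the associated graded $\mathcal{F}^p/\mathcal{F}^{p+1}$ by picking a representative $c\in\mathcal{F}^p\CL^{p+q}(\lf,M)[-1]$, i.e., a $(p+q+1)$-cochain in $\CL^\bullet(\lf,M)$ that vanishes as soon as any of its first $p$ arguments lies in $\If$, applying the Leibniz coboundary $\dl$, and discarding the part that lies in $\mathcal{F}^{p+1}\CL^{p+q+1}(\lf,M)[-1]$. Under the isomorphism $E_0^{p,q}\cong\Hom_\F(\qf^p\otimes\If,\CL^q(\lf,M))$, the class $[c]$ corresponds to the linear map $f:(\bar x_1,\dots,\bar x_p,y)\mapsto c(x_1,\dots,x_p,y,-,\dots,-)$, where $x_i\in\lf$ is any lift of $\bar x_i\in\qf$; this is well defined precisely because $c\in\mathcal{F}^p$. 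Thus I would evaluate $\dl c$ on the tuple $(x_1,\dots,x_p,y,z_1,\dots,z_{q+1})$ with $x_i\in\lf$, $y\in\If$, $z_j\in\lf$, and classify each summand.

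The analysis mirrors the case-by-case bookkeeping carried out in Lemma~\ref{lemma_filtration_compatible} and in the corresponding discussion for the spectral sequence of Theorem~\ref{theorem_A}. A summand belongs to $\mathcal{F}^{p+1}$ (and so dies in the associated graded) whenever either an element of $\If$ gets shifted into one of the first $p$ slots of $c$'s argument list, or the summand is forced to be zero by the structural hypotheses $\If\subseteq C_\ell(\lf)$ and $\If\cdot M=M\cdot\If=0$ (the latter coming from $M$ being a $\qf$-bimodule pulled back along $\pi$). Concretely: a left-action term $\delta_i(c)$ with $i\le p$ vanishes because deleting $x_i$ brings $y$ into the $p$-th position; the term $\delta_{p+1}(c)$ vanishes because $y$ acts trivially on $M$; a product term $\dl_{ij}(c)$ with $1\le i<j\le p$ again pushes $y$ to position $p$; the term $\dl_{i,p+1}(c)$ with $i\le p$ places $x_iy\in\If$ in the $p$-th slot; a term $\dl_{ij}(c)$ with $i\le p$ and $j\ge p+2$ once more shifts $y$ into one of the first $p$ positions; and $\dl_{p+1,j}(c)$ with $j\ge p+2$ vanishes by $\If\subseteq C_\ell(\lf)$.

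What remains are the summands whose active indices live entirely in the last $q+1$ positions: the left-action terms $\delta_i(c)$ with $i\ge p+2$, the right-action term $\partial(c)$, and the product terms $\dl_{ij}(c)$ with $p+2\le i<j$. Restricting attention to the $z$-variables while keeping $(x_1,\dots,x_p,y)$ fixed, these three families are, up to a uniform sign absorbed into the shift $[-1]$, precisely the left-action, right-action, and product pieces of the Leibniz coboundary $\dl^q$ applied to the $q$-cochain $f(\bar x_1,\dots,\bar x_p,y)\in\CL^q(\lf,M)$. Reading this off yields $d_0^{p,q}(f)=\dl^q_{\CL^q(\lf,M)}\circ f$, as required.

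The main obstacle is the combinatorial bookkeeping in the product cases $\dl_{ij}(c)$: one must track how deleting the $i$-th argument and substituting $w_iw_j$ for $w_j$ reindexes the surviving slots, and thereby verify that whenever $(i,j)$ does not lie entirely inside $\{p+2,\dots,p+q+2\}$ either $y\in\If$ gets forced into one of the first $p$ positions or the substituted product itself is zero by $\If\subseteq C_\ell(\lf)$. Once this position-tracking is carried through, the identification of the surviving sum with $\dl^q\circ f$ is purely formal sign and relabelling bookkeeping.
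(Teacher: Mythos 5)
Your proof is correct and takes essentially the same route as the paper's: a term-by-term analysis of the Leibniz coboundary on a representative with the $\If$-element in the $(p+1)$-th slot, killing each summand either because an element of $\If$ is pushed into one of the first $p$ positions (using that $\If$ is an ideal), or by $\If\subseteq C_\ell(\lf)$, or by the trivial action of $\If$ on $M$, leaving exactly the terms of $\dl^q_{\CL^q(\lf,M)}$. Your case bookkeeping is in fact a bit more explicit than the paper's (separating $\dl_{i,p+1}$ with $i\le p$ from $\dl_{p+1,j}$ with $j\ge p+2$), and the uniform sign $(-1)^{p+1}$ you absorb is immaterial.
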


\begin{proof}
The differential
\begin{multline*}
\dl_0:{\mathcal F}^p\CL^n(\lf,M)[-1]\,/\,{\mathcal F}^{p+1}\CL^n(\lf,M)[-1]\to\\
\to{\mathcal F}^p\CL^{n+1}(\lf,M)[-1]\,/\,{\mathcal F}^{p+1}\CL^{n+1}(\lf,M)[-1]
\end{multline*}
is induced by $\dl^\bullet$. Thus, we have to examine which terms $\dl_{ij}(c)$, $\delta_i
(c)$, and $\partial(c)$ composing the value $\dl_0(c)$ are non-zero when we
put an element of  $\If$ within the first $p+1$ entries.

It is clear that $\dl_{ij}(c)=0$ for $i,j\leq p+1$ since this is true when the element of $\If$
is not involved in the product, as the number of elements is diminished by one, and it is also
true when the element of $\If$ is in the product because $\If$ is an ideal. We then have
$\dl_{ij}(c)=0$ for $i\leq p+1$ and $j\geq p+2$, since in case the element of $\If$ acts in
the product, it acts trivially because of $\If\subseteq C_\ell(\lf)$. Furthermore, the terms
$\delta_i(c)$ for $i\leq p+1$ are zero since elements of $\If$ act trivially on $M$.

Therefore, we are left with the terms composing the differential $\dl_0|_{\CL^q(\lf,M)}$.
\end{proof}

Consequently, the first term of the spectral sequence is
$$E_1^{p,q}=\Hom_\F(\qf^p\otimes\If,\HL^q(\lf,M))\,.$$

Now we proceed to determine the differential $\dl_1$ on $E_1^{p,q}$. It is again induced
by the Leibniz coboundary operator. As before, classes in $\HL^q(\lf,M)$ are represented
by cocycles and thus the part of the Leibniz differential constituting the Leibniz coboundary
operator $\dl^q_{\CL^q(\lf,M)}$ is zero. The remaining terms constitute the differential
on $\qf^p\otimes\If$, again since by the Cartan relations for Leibniz cohomology (see
\cite[Proposition~3.1]{LP} for the case of right Leibniz algebras and \cite[Proposition~1.3.2]{C}
for the case of left Leibniz algebras) a Leibniz algebra acts trivially on its cohomology. But
one needs to be careful since the Cartan relations do only hold for $q\ge 1$. Therefore,
for an arbitrary bimodule $M$, $\qf$ will act non-trivially on $\HL^0(\lf,M)$. On the other
hand, if the bimodule $M$ is symmetric, however, the action is indeed trivial on $\HL^0(\lf,M)$. 

Note that in the proof of the preceding lemma, in all action terms on $\If$ or on
$\Hom_\F(\If,\HL^0(\lf,M))$ the action is from the left, thus, in order to interpret the
remaining terms as the Leibniz boundary operator with values in $\If$, we have
to switch around the last action term. This is the reason for viewing $\If$ and
$\Hom_\F(\If,\HL^0(\lf,M))$) here as a symmetric $\qf$-bimodule.

Finally, we obtain from the Universal Coefficient Theorem (for example, see \cite[Theorem
3.6.5]{HA}) that for $q>0$ the second term of the spectral sequence is
$$E_2^{p,q}=\Hom_\F(\HL_p(\qf,\If_s),\HL^q(\lf,M))\,.$$
Furthermore, for a finite-dimensional Leibniz algebra $\lf$ and a symmetric $\qf$-bimodule
$M$, for all $p,q\geq 0$, the $E_1$-term simplifies to 
$$E_1^{p,q}=\CL^p(\qf,\If^*)\otimes\HL^q(\lf,M)\,,$$
and in this special case, for all $p,q\geq 0$, the $E_2$-term is 
$$E_2^{p,q}=\HL^p(\qf,(\If)^*_s)\otimes\HL^q(\lf,M)\,.$$

This discussion proves the following results:

\begin{thm}\label{hs1}
Let $0\to\If\to\lf\stackrel{\pi}\to\qf\to 0$ be a short exact sequence of left Leibniz
algebras such that $\If\subseteq C_\ell(\lf)$. Then $\If$ is in a natural way an
anti-symmetric $\qf$-bimodule via $a\cdot y:=\pi^{-1}(a)y$ and $y\cdot a:=y
\pi^{-1}(a)$ for every element $a\in\qf$ and every element $y\in\If$. Moreover,
there is a spectral sequence converging to $\HL^\bullet(\lf\vert\qf,M)$ with second term
\begin{eqnarray*}
E_2^{p,q}=
\left\{
\begin{array}{cl}
\HL^{p}(\qf,\Hom_\F(\If,\HL^0(\lf,M))_s) & \mbox{if }p\ge 0,q=0\\ 
\Hom_\F(\HL_{p}(\qf,\If_s),\HL^q(\lf,M)) & \mbox{if }p\ge 0,q\ge 1
\end{array}
\right.
\end{eqnarray*}
for every $\qf$-bimodule $M$.
\end{thm}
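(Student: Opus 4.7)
The plan is to consolidate the extended computation preceding the theorem statement into a clean proof in five steps.

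\emph{Step 1 (Bimodule structure on $\If$).} I would first check that the formulas $a\cdot y:=\pi^{-1}(a)y$ and $y\cdot a:=y\pi^{-1}(a)$ give a well-defined anti-symmetric $\qf$-bimodule structure on $\If$. The hypothesis $\If\subseteq C_\ell(\lf)$ forces $y\pi^{-1}(a)=0$ for every $y\in\If$, so the right action is identically zero (making $\If$ anti-symmetric), and moreover two preimages of $a$ differ by an element of $C_\ell(\lf)$, so the left action is independent of the choice $\pi^{-1}(a)$. The left Leibniz module identity (\ref{LLM}) for $\qf$ on $\If$ follows from (\ref{RLLI}) in $\lf$ together with $\If$ being an ideal.

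\emph{Step 2 (Convergence).} The filtration (\ref{finite_filtration_2}) is finite with $\mathcal{F}^0\CL^n(\lf,M)[-1]=\CL^n(\lf,M)[-1]$ and $\mathcal{F}^{n+1}\CL^n(\lf,M)[-1]=\CL^n(\qf,M)[-1]$. By the lemma preceding the theorem, this filtration is $\dl^\bullet$-compatible, so it descends to a finite, $\dl^\bullet$-compatible filtration of the quotient complex $\CL^\bullet(\lf\vert\qf,M)$, producing a strongly convergent spectral sequence with abutment $\HL^\bullet(\lf\vert\qf,M)$.

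\emph{Step 3 ($E_0$, $E_1$, and identification of $\dl_1$).} The $\Hom_\F$--$\otimes$ adjunction together with the identification of the successive quotients of the filtration as $\Hom_\F(\qf^{\otimes p}\otimes\If\otimes\lf^{\otimes q},M)$ yields $E_0^{p,q}\cong\Hom_\F(\qf^{\otimes p}\otimes\If,\CL^q(\lf,M))$, and Lemma~\ref{lemma_3_5} gives $E_1^{p,q}\cong\Hom_\F(\qf^{\otimes p}\otimes\If,\HL^q(\lf,M))$. The main obstacle is pinning down $\dl_1$: the only surviving boundary terms are the ones involving a left $\qf$-action on an $\If$-slot and on $\HL^q(\lf,M)$. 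Two subtleties have to be handled. First, the natural $\qf$-action on $\If$ from Step 1 is from the left only, yet the Leibniz coboundary formula in degree $p$ has a distinguished right-action in the last argument; one must therefore view $\If$ as a \emph{symmetric} bimodule $\If_s$ to absorb this last slot (this is the source of the $(-)_s$ in the $E_2$ formula). Second, the Cartan relations for Leibniz cohomology apply only in positive degrees, so we have to split the analysis at $q=0$ versus $q\geq 1$.

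\emph{Step 4 (Computation of $E_2$).} For $q\ge 1$, the Cartan relations (\cite[Proposition~3.1]{LP}, \cite[Proposition~1.3.2]{C}) force the induced $\qf$-action on $\HL^q(\lf,M)$ to be trivial; thus $\dl_1$ is the Leibniz coboundary of the cochain complex $\CL^p(\qf,\Hom_\F(\If_s,\HL^q(\lf,M)))$ computing the Leibniz cohomology of $\qf$ with values in $\Hom_\F(\If_s,\HL^q(\lf,M))$ (a symmetric bimodule because the $\qf$-action on the second factor is trivial). Since $\Hom_\F(-,V)$ with $V$ a trivial module is exact, the Universal Coefficient Theorem (cf.\ \cite[Theorem~3.6.5]{HA}) yields $E_2^{p,q}\cong\Hom_\F(\HL_p(\qf,\If_s),\HL^q(\lf,M))$. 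For $q=0$ the Cartan relations fail; the left $\qf$-action on $\HL^0(\lf,M)$ remains, so after turning $\Hom_\F(\If,\HL^0(\lf,M))$ (with its natural left $\qf$-module structure) into the symmetric bimodule $\Hom_\F(\If,\HL^0(\lf,M))_s$ we obtain $E_2^{p,0}=\HL^p(\qf,\Hom_\F(\If,\HL^0(\lf,M))_s)$, which completes the identification of the $E_2$-term and hence the proof.
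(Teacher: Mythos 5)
Your proposal is correct and follows essentially the same route as the paper: the finite filtration (\ref{finite_filtration_2}) of $\CL^\bullet(\lf,M)[-1]$, the compatibility lemma, the identification of $E_0$ and $E_1$ via the $\Hom_\F$--$\otimes$ adjunction and Lemma~\ref{lemma_3_5}, the use of the Cartan relations (valid only for $q\ge 1$, forcing the case split at $q=0$), the symmetrization of $\If$ to absorb the distinguished last slot of the Leibniz coboundary, and the Universal Coefficient Theorem to pass to the stated $E_2$-term. The only addition is your explicit verification in Step~1 that the $\qf$-bimodule structure on $\If$ is well defined and anti-symmetric, which the paper asserts without proof and which you justify correctly from $\If\subseteq C_\ell(\lf)$.
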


\begin{cor}\label{pirashvili}
If in Theorem \ref{hs1} the Leibniz algebra $\lf$ is finite dimensional and the $\qf$-bimodule
$M$ is symmetric, then, for any integers $p,q\ge 0$, the $E_2$-term of the spectral
sequence simply reads $$E_2^{p,q}=\HL^{p}(\qf,(\If^*)_s)\otimes\HL^q(\lf,M)\,,$$
where the linear dual $\If^*$ of $\If$ is a left $\lf$-module via $(x\cdot f)(y):=-f(xy)$
for every linear form $f\in\If^*$ and any elements $x\in\lf,y\in\If$. 
\end{cor}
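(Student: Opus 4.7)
The plan is to specialize Theorem~\ref{hs1} using the finite-dimensionality of $\lf$ and the symmetry of $M$, and then apply standard duality identifications. I would first handle the case $q\ge 1$. Since $\lf$ is finite dimensional, so are $\qf=\lf/\If$ and $\If$, and hence $\HL_p(\qf,\If_s)$ is finite dimensional for every $p\ge 0$. The canonical isomorphism $\Hom_\F(V,W)\cong V^*\otimes_\F W$ valid for finite-dimensional $V$ converts the second case of the formula in Theorem~\ref{hs1} into $E_2^{p,q}\cong\HL_p(\qf,\If_s)^*\otimes_\F\HL^q(\lf,M)$. Next I would invoke the duality between Leibniz homology and cohomology for a finite-dimensional Leibniz algebra with finite-dimensional coefficients, which yields $\HL_p(\qf,\If_s)^*\cong\HL^p(\qf,(\If_s)^*)$, and then identify $(\If_s)^*$ as a symmetric $\qf$-bimodule with $(\If^*)_s$.

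The identification $(\If_s)^*\cong(\If^*)_s$ is the main technical step. The point is that taking the $\F$-linear dual commutes with the symmetrization operation $N\mapsto N_s$: if $N$ is a left $\qf$-module, then $N^*$ carries the dual left $\qf$-module structure $(a\cdot f)(n):=-f(a\cdot n)$, and the resulting symmetric $\qf$-bimodule $(N^*)_s$ is naturally isomorphic to the linear dual of the symmetric $\qf$-bimodule $N_s$. Applied to $N:=\If$, regarded as the left $\qf$-module underlying the anti-symmetric bimodule structure of Theorem~\ref{hs1}, this gives $(\If_s)^*\cong(\If^*)_s$, where the $\qf$-action on $\If^*$ is obtained by factoring the left $\lf$-action $(x\cdot f)(y):=-f(xy)$ through $\qf$. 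Factoring is legitimate because $\If\subseteq C_\ell(\lf)$ forces $zy=0$, and hence $(z\cdot f)(y)=0$, for every $z\in\If$, $f\in\If^*$, and $y\in\If$.

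For the remaining case $q=0$, I would use that, since $M$ is symmetric, the Cartan relations for Leibniz cohomology (as noted in the discussion preceding Theorem~\ref{hs1}) force $\qf$ to act trivially on $\HL^0(\lf,M)=M^\lf$. Regarding $\HL^0(\lf,M)$ as a trivial $\qf$-module, the isomorphism $\Hom_\F(\If,\HL^0(\lf,M))\cong\If^*\otimes_\F\HL^0(\lf,M)$ is $\qf$-equivariant, whence
\[
\Hom_\F(\If,\HL^0(\lf,M))_s\cong(\If^*)_s\otimes_\F\HL^0(\lf,M)
\]
as symmetric $\qf$-bimodules. Because tensoring with a fixed trivial vector space commutes with taking cohomology, this yields $\HL^p(\qf,\Hom_\F(\If,\HL^0(\lf,M))_s)\cong\HL^p(\qf,(\If^*)_s)\otimes_\F\HL^0(\lf,M)$, giving the claimed form of $E_2^{p,0}$. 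The principal obstacle, and what warrants the most careful bookkeeping, is verifying the sign conventions and module actions in the duality $(\If_s)^*\cong(\If^*)_s$ of symmetric $\qf$-bimodules; everything else is a straightforward application of finite-dimensional duality to Theorem~\ref{hs1}.
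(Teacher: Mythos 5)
Your argument is correct, and it reaches the corollary by a slightly different route than the paper. The paper has no standalone proof: the corollary is established inside the discussion preceding Theorem~\ref{hs1}, where, for $\lf$ finite dimensional and $M$ symmetric, the \emph{$E_1$-term} is identified as $\CL^p(\qf,\If^*)\otimes\HL^q(\lf,M)$ (using $\Hom_\F(\qf^{\otimes p}\otimes\If,V)\cong\CL^p(\qf,\If^*)\otimes V$) with differential $\dl^p_{\CL^p(\qf,(\If^*)_s)}\otimes\id$, so that $E_2^{p,q}=\HL^p(\qf,(\If^*)_s)\otimes\HL^q(\lf,M)$ is obtained simply by taking cohomology in the first tensor factor. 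You instead take the stated $E_2$-term of the theorem as a black box and dualize \emph{after} passing to (co)homology, via $\Hom_\F(\HL_p(\qf,\If_s),\HL^q(\lf,M))\cong\HL_p(\qf,\If_s)^*\otimes\HL^q(\lf,M)$ and the identification $\HL_p(\qf,\If_s)^*\cong\HL^p(\qf,(\If^*)_s)$. Both arguments rest on the exactness of $\Hom_\F(-,\F)$ over a field; the paper's version never leaves the cochain level and therefore does not need to invoke a homology--cohomology duality with nontrivial coefficients as a separate statement, whereas your version does need it --- though in this finite-dimensional setting it follows at once from the chain-level isomorphism $\CL_\bullet(\qf,\If_s)^*\cong\CL^\bullet(\qf,(\If^*)_s)$, which is exactly the bookkeeping you flag. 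Your treatment of the $q=0$ case (triviality of the $\qf$-action on $\HL^0(\lf,M)=M^\lf$ for symmetric $M$, then pulling the trivial factor $\HL^0(\lf,M)$ out of the coefficients) matches the paper's reasoning. One small payoff of your organization is that it makes the corollary a formal consequence of the theorem's statement rather than of its proof, at the modest cost of having to state and check the coefficient duality explicitly.
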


\noindent {\bf Remarks.} 
\begin{itemize}
\item[(a)] According to \cite[Proposition 2.13]{F}, Theorem \ref{hs1} applies to $\If:=\leib(\lf)$
                and $\qf:=\lf_\lie$ (see \cite[Remark 4.2]{P} for the analogous statement for Leibniz
                homology). Note that in the cohomology space $\HL^p(\qf,(\If^*)_s)$, the left
                $\qf$-module $\If^*$ is viewed as a symmetric bimodule while naturally it is 
                an anti-symmetric $\qf$-bimodule. 
\item[(b)] The higher differentials in the spectral sequence are again induced by the Leibniz
                 coboundary operator $\dl^{\bullet}$. Observe that the spectral sequence
                 of Corollary \ref{pirashvili} is isomorphic to the spectral sequence of the cochain bicomplex
                 $\CL^{\bullet}(\qf,(\If^*)_s)\otimes\CL^{\bullet}(\lf,M)$. Therefore the description of
                 the higher differentials
                 can be adapted from \cite{Hue} (see, in particular, Remark 3.2 therein). For example,
                 it is clear, if one of the two differentials in the bicomplex is zero, then all higher
                 differentials vanish. We will see an instance of this case in Example C below.
\item[(c)] One might wonder what one gets when one uses the filtration by the last $p$
                arguments instead of the first $p$ arguments. It turns out that this spectral
                sequence has an $E_2$-term that is more difficult to describe (and which we stated
                erroneously in a first version of this article), because one takes in the $E_2$-term the
                cohomology of a complex which appears as coefficients in the Leibniz cohomology
                that constitutes the $E_1$-term.
\end{itemize}
\vspace{.1cm}

As in the previous section, we illustrate the use of the spectral sequence of Theorem \ref{hs1}
and the associated long exact sequence (see Proposition \ref{les}) by two examples.

In the first example we compute the Leibniz cohomology of the smallest nilpotent non-Lie
left Leibniz algebra with trivial coefficients.
\vspace{.3cm}

\noindent {\bf Example C.} Let $\F$ denote an arbitrary field, and let $\nf:=\F e\oplus
\F f$ be the two-dimensional nilpotent left (and right) Leibniz algebra over $\F$ with
multiplication determined by $ff=e$\,. Then $\leib(\nf)=\F e$, and thus $\nf_\lie$ is a
one-dimensional abelian Lie algebra. Hence $\HL^n(\nf_\lie,\F)\cong\F$ for every
non-negative integer $n$. Moreover, we have that $\dim_\F\HL^0(\nf,\F)=1$ and
$\dim_\F\HL^1(\nf,\F)=1$.

Next, we compute the higher cohomology with the help of the spectral sequence of
Corollary \ref{pirashvili}. As observed in Remark (b) after Corollary~\ref{pirashvili},
all higher differentials are zero in our case since the Leibniz coboundary operator
of the abelian Lie algebra with values in the trivial module vanishes. With the input
data $\dim_\F\HL^0(\nf,\F)=1$ and $\dim_\F\HL^1(\nf,\F)=1$, we therefore get from
the spectral sequence $$\dim_\F\HL^0(\nf|\nf_\lie,\F)=1\,\,\,{\rm and}\,\,\,\dim_\F
\HL^1(\nf|\nf_\lie,\F)=2\,.$$ In order to be able to apply now the long exact sequence
from Proposition \ref{les} and deduce the dimensions of the cohomology spaces, we
want to argue that this sequence is split. In fact, it is split because the connecting
homomorphism is surjective. This comes from the fact that the cochain complex
$\CL^{\bullet}(\nf_\lie,\F)$ is one-dimensional in each degree and a generator can
be hit via the connecting homomorphism which is easy to see directly (take a cochain
in $\CL^n(\nf|\nf_\lie,\F)$ represented by an element in $\CL^{n+1}(\nf,\F)$ with
exactly one slot in $e^*$ at the first place: the Leibniz product in this slot gives the
only non-zero contribution). Consequently, the long exact sequence from Proposition
\ref{les} splits into short exact sequences $$0\to\HL^n(\nf,\F)\to\HL^{n-1}(\nf|\nf_\lie,
\F)\to\HL^{n+1}(\nf_\lie,\F)\to 0\,,$$ starting from $n=2$, where the right-hand term
is one-dimensional. These short exact sequences, together with the spectral sequence
where every differential is zero, permit us to determine all relative and absolute cohomology
spaces. For example, we have $\dim_\F\HL^2(\nf,\F)=1$, and then $\dim_\F\HL^2
(\nf|\nf_\lie,\F)=3$, $\dim_\F\HL^3(\nf,\F)=2$, and then $\dim_\F\HL^3(\nf|\nf_\lie,
\F)=5$, and so on. In general, we obtain by induction that $\dim_\F\HL^n(\nf,\F)=
2^{n-2} $ for every integer $n\ge 2$ and $\dim_\F\HL^n(\nf|\nf_\lie,\F)=2^{n-1}
+1$ for every integer $n\ge 1$.
\vspace{.3cm}

In the second example we compute the Leibniz cohomology of the smallest non-nilpotent
non-Lie left Leibniz algebra with coefficients in one-dimensional bimodules. Note that contrary
to the semidirect product of two one-dimensional Lie algebras in Example A the Leibniz
algebra in Example D is the hemi-semidirect product of two one-dimensional Lie algebras.
It turns out that this somewhat simplifies matters.
\vspace{.3cm}

\noindent {\bf Example D.} Let $\F$ denote an arbitrary field, and let $\Af:=\F h\oplus\F e$
be the two-dimensional supersolvable left Leibniz algebra over $\F$ with multiplication determined
by $he=e$\,. For any scalar $\lambda\in\F$ one can define a one-dimensional left $\Af$-module
$F_\lambda:=\F 1_\lambda$ with $\Af$-action defined by $h\cdot 1_\lambda:=\lambda 1_\lambda$
and $e\cdot 1_\lambda:=0$. Note that $\leib(\Af)=\F e$, and thus $\Af_\lie$ is a one-dimensional
abelian Lie algebra. Then we obtain from \cite[Lemma 1]{B1} and Theorem \ref{vanlie} that
\begin{eqnarray*}
\dim_\F\HL^n(\Af_\lie,(F_\lambda)_s)=
\left\{
\begin{array}{cl}
1 & \mbox{if }\lambda=0\mbox{ and }n\mbox{ is arbitrary}\\ 
0 & \hspace{1cm}\mbox{otherwise}\,.
\end{array}
\right.
\end{eqnarray*}
Moreover, we deduce from Lemma \ref{antisym}\,(b) that
\begin{eqnarray*}
\HL^n(\Af_\lie,(F_\lambda)_a)&\cong&\HL^{n-1}(\Af_\lie,\Hom_\F(\Af_\lie,F_\lambda)_s)\\
&\cong&\HL^{n-1}(\Af_\lie,(F_\lambda)_s)
\end{eqnarray*}
for every integer $n\ge 1$, and therefore
\begin{eqnarray*}
\dim_\F\HL^n(\Af_\lie,(F_\lambda)_a)=
\left\{
\begin{array}{cl}
1 & \mbox{if }\lambda=0\mbox{ and }n\mbox{ is arbitrary or if }\lambda\ne 0\mbox{ and }n=0\\ 
0 & \hspace{3cm}\mbox{otherwise}\,.
\end{array}
\right.
\end{eqnarray*}

In order to be able to apply the spectral sequence of Theorem \ref{hs1}, we first compute
$\HL^\bullet(\Af_\lie,[\leib(\Af)^*]_s)$. Observe that the module $\leib(\Af)^*=\F e^*\cong
F_{-1}$ is non-trivial irreducible, and furthermore it is viewed as a symmetric $\Af_\lie$-bimodule.
Hence from the above it follows that $\HL^n(\Af_\lie,[\leib(\Af)^*]_s)=0$ for every
non-negative integer $n$. This implies in turn that the spectral sequence of Theorem \ref{hs1}
collapses at the $E_2$-term and that 
\begin{eqnarray*}
\HL^n(\Af\vert\Af_\lie,(F_\lambda)_a)&=&\HL^n(\Af_\lie,\Hom_\F(\leib(\Af),\HL^0(\Af,
(F_\lambda)_a))_s)\\
&=&\HL^n(\Af_\lie,\Hom_\F(\leib(\Af),F_\lambda)_s)
\end{eqnarray*}
for all non-negative integers $n$, while $\HL^n(\Af\vert\Af_\lie,(F_\lambda)_s)=0$ for all
$n\ge 0$ by Corollary \ref{pirashvili}. Notice that as an $\Af$-bimodule $\Hom_\F(\leib(\Af),
F_\lambda)_s\cong[F_{\lambda-1}]_s$. We have already observed in Example C that the
long exact sequence of Proposition~\ref{les} splits, and therefore we conclude from Proposition
\ref{les} that
$$\HL^n(\Af,(F_{\lambda})_s)\cong\HL^n(\Af_\lie,(F_{\lambda})_s)$$
and 
$$\HL^n(\Af,(F_{\lambda})_a)\cong\HL^n(\Af_\lie,(F_{\lambda})_a)\oplus\HL^n(\Af_\lie,
(F_{\lambda-1})_s)$$
for all $\lambda$ and all non-negative integers $n$. Consequently, we obtain that
\begin{eqnarray*}
\dim_\F\HL^n(\Af,(F_\lambda)_s)=
\left\{
\begin{array}{cl}
1 & \mbox{if }\lambda=0\mbox{ and }n\mbox{ is arbitrary}\\ 
0 & \hspace{1cm}\mbox{otherwise}\,,
\end{array}
\right.
\end{eqnarray*}
and
\begin{eqnarray*}
\dim_\F\HL^n(\Af,(F_\lambda)_a)=
\left\{
\begin{array}{cl}
1 & \mbox{if }\lambda=0,1\mbox{ and }n\mbox{ is arbitrary or if }\lambda\ne 0,1\mbox{ and }n=0\\ 
0 & \hspace{3cm}\mbox{otherwise}\,.
\end{array}
\right.
\end{eqnarray*}
\vspace{.3cm}

\noindent {\bf Remark.}
In particular, we have that $\dim_\F\HL^n(\Af,\F)=1$ for every non-negative integer $n$. Note
that this follows as well from the scheme of proof of Proposition 4.3 in \cite{P} by using the
isomorphism between Leibniz homology and cohomology with trivial coefficients. Indeed, the
characteristic element $\chelem(\Af)\in\HL^2(\Af_\lie,\leib(\Af))$ of $\Af$ is zero as $\leib(\Af)
=\F e\cong(F_1)_a$ and $\HL^2(\Af_\lie,(F_1)_a)=0$. Since also $\HL^\bullet(\Af_\lie,[{\rm Leib}
(\Af)^*]_s)$ is zero, we can reason in the same way as Pirashvili does.
\vspace{.2cm}


\section{Cohomology of semi-simple Leibniz algebras}\label{semsim}


Recall that a left Leibniz algebra $\lf$ is called {\em semi-simple\/} if $\leib(\lf)$ contains
every solvable ideal of $\lf$ (see \cite[Section~7]{F}). In particular, a finite-dimensional
left Leibniz algebra $\lf$ is semi-simple if, and only if, $\leib(\lf)=\rad(\lf)$, where $\rad
(\lf)$ denotes the largest solvable ideal of $\lf$ (see \cite[Proposition 7.4]{F}). Moreover,
a left Leibniz algebra $\lf$ is semi-simple if, and only if, the canonical Lie algebra $\lf_\lie$
associated to $\lf$ is semi-simple (see \cite[Proposition 7.8]{F}).

\begin{lem}\label{leibnizinv}
Let $\lf$ be a finite-dimensional semi-simple left Leibniz algebra over a field of characteristic
zero. Then $[\leib(\lf)^*]_s^{\lf_\lie}=0$, where $\leib(\lf)^*$ is a left $\lf$-module, and
thus a left $\lf_\lie$-module, via $(x\cdot f)(y):=-f(xy)$ for every linear form $f\in\leib(\lf)^*$
and any elements $x,y\in\lf$.
\end{lem}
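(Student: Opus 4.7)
The plan is to reduce the invariance condition to the identity $\lf_\lie\cdot\leib(\lf)=\leib(\lf)$ and then to verify that identity by means of Barnes's Levi decomposition for Leibniz algebras in characteristic zero.

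First I would observe that $\leib(\lf)\subseteq C_\ell(\lf)$ forces $\leib(\lf)$ to act as zero on $\leib(\lf)^*$ from the left, hence (by symmetry of the bimodule) also from the right on $[\leib(\lf)^*]_s$, so the whole $\lf$-bimodule structure factors through $\lf_\lie$. Since $[\leib(\lf)^*]_s$ is symmetric, a form $f$ lies in the right invariants precisely when $\bar{x}\cdot f=0$ for every $\bar{x}\in\lf_\lie$. Unwinding $(\bar{x}\cdot f)(y)=-f(xy)$, this says that $f$ annihilates the subspace $\lf_\lie\cdot\leib(\lf)\subseteq\leib(\lf)$. The statement is therefore equivalent to $\lf_\lie\cdot\leib(\lf)=\leib(\lf)$.

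Next I would invoke Barnes's Levi theorem to pick a semi-simple Lie subalgebra $\ssf\subseteq\lf$ with $\lf=\ssf+\leib(\lf)$ as vector spaces; the projection identifies $\ssf\cong\lf_\lie$, and the $\lf_\lie$-action on $\leib(\lf)$ is realized by left multiplication by elements of $\ssf$, so that $\lf_\lie\cdot\leib(\lf)=\ssf\cdot\leib(\lf)$. For any $x=s+r\in\lf$ with $s\in\ssf$ and $r\in\leib(\lf)$ one computes $x^2=s^2+sr+rs+r^2=sr$, because $s^2=0$ as $\ssf$ is a Lie algebra, and $rs=r^2=0$ as $r\in\leib(\lf)\subseteq C_\ell(\lf)$. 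Since $\leib(\lf)$ is spanned by such squares, $\leib(\lf)\subseteq\ssf\cdot\leib(\lf)$; the reverse inclusion is automatic because $\leib(\lf)$ is an ideal.

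The main obstacle is securing the appeal to Levi's theorem in the Leibniz setting; once that structural input is granted, the key computation $x^2=sr$ and the duality between $(\leib(\lf)^*)^{\lf_\lie}$ and the coinvariants $\leib(\lf)/(\lf_\lie\cdot\leib(\lf))$ are completely routine. An argument avoiding Barnes's theorem would have to rule out trivial $\lf_\lie$-summands of the semi-simple module $\leib(\lf)$ intrinsically from the fact that $\leib(\lf)$ is linearly spanned by the squares $x^2$, which does not seem immediate.
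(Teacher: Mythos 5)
Your proposal is correct and follows essentially the same route as the paper: both invoke Barnes's Levi theorem to write $\lf=\ssf\oplus\leib(\lf)$ and both rest on the computation $(s+r)^2=sr$ (using $s^2=0$ in the Lie subalgebra $\ssf$ and $r\in\leib(\lf)\subseteq C_\ell(\lf)$) to conclude $\leib(\lf)=\ssf\cdot\leib(\lf)$, whence every invariant form vanishes. Your reformulation via the duality between invariants of $\leib(\lf)^*$ and the coinvariants $\leib(\lf)/(\lf_\lie\cdot\leib(\lf))$ is just a repackaging of the paper's direct verification, not a genuinely different argument.
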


\begin{proof}
It follows from Levi's theorem for Leibniz algebras (see \cite[Proposition 2.4]{P} or
\cite[Theorem 1]{B2}) that there exists a semi-simple Lie subalgebra $\ssf$ of $\lf$
such that $\lf=\ssf\oplus\leib(\lf)$ (see also \cite[Corollary 2.14]{FM}). Note that then
$\lf_\lie\cong\ssf$. Since $\ssf$ is a Lie algebra and $\leib(\lf)\subseteq C_\ell(\lf)$,
we obtain that $(s+x)(s+x)=sx$ for any elements $s\in\ssf$ and $x\in\leib(\lf)$. This
shows that $\leib(\lf)=\ssf\leib(\lf)$. Now let $\varphi\in[\leib(\lf)^*]_s^\ssf$ be
arbitrary. Since $(s\cdot\varphi)(x)=-\varphi(sx)$ for any $\varphi\in\leib(\lf)^*$,
$s\in\ssf$, and $x\in\leib(\lf)$, we conclude that $\varphi[\leib(\lf)]=\varphi[\ssf
\leib(\lf)]=0$, which proves the assertion.
\end{proof}

The first main  result in this section is the Leibniz analogue of Whitehead's vanishing
theorem for the Chevalley-Eilenberg cohomology of finite-dimensional semi-simple
Lie algebras over a field of characteristic zero (see \cite[Theorem 24.1]{CE} or
\cite[Theorem 10]{HS}). Note that in the special case of a Lie algebra,
Theorem~\ref{whitehead} is an immediate consequence of Whitehead's classical
vanishing theorem and Theorem~\ref{vanlie}.

\begin{thm}\label{whitehead}
Let $\lf$ be a finite-dimensional semi-simple left Leibniz algebra over a field of
characteristic zero. If $M$ is a finite-dimensional $\lf$-bimodule such that $M^\lf
=0$, then $\HL^n(\lf,M)=0$ for every non-negative integer $n$.
\end{thm}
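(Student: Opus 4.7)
The plan is to reduce to classical Whitehead vanishing for semi-simple Lie algebras by combining Lemma~\ref{sym}, Theorem~\ref{vanlie}, and the spectral sequence of Corollary~\ref{pirashvili}.

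First, since $M^{\lf}=0$, Lemma~\ref{sym} yields that $M$ is symmetric and $\leib(\lf)\subseteq\ann_{\lf}^{\bi}(M)$. Thus the $\lf$-bimodule structure on $M$ descends to a symmetric $\lf_\lie$-bimodule structure, and the spaces of right invariants satisfy $M^{\lf_\lie}=M^{\lf}=0$. Since $\lf_\lie$ is a finite-dimensional semi-simple Lie algebra over a field of characteristic zero and $M$ is a finite-dimensional $\lf_\lie$-module with no invariants, the Casimir element argument (the strengthening of Whitehead's lemmas that covers all cohomological degrees) gives $\HCE^{n}(\lf_\lie,M)=0$ for every $n\ge 0$. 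Theorem~\ref{vanlie} applied to the Lie algebra $\lf_\lie$ and the left module $M$ then yields $\HL^{n}(\lf_\lie,M)=0$ for every $n\ge 0$.

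Next, I would apply Corollary~\ref{pirashvili} to the short exact sequence $0\to\leib(\lf)\to\lf\to\lf_\lie\to 0$, which is admissible because $\leib(\lf)\subseteq C_\ell(\lf)$. The $E_2$-term of the resulting Hochschild-Serre spectral sequence is
\[
E_2^{p,q}=\HL^{p}(\lf_\lie,[\leib(\lf)^{*}]_s)\otimes\HL^{q}(\lf,M).
\]
Lemma~\ref{leibnizinv} provides $[\leib(\lf)^{*}]_s^{\lf_\lie}=0$, so exactly the same Whitehead/Theorem~\ref{vanlie} reasoning (applied now to the left $\lf_\lie$-module $\leib(\lf)^{*}$ in place of $M$) gives $\HL^{p}(\lf_\lie,[\leib(\lf)^{*}]_s)=0$ for every $p\ge 0$. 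Hence $E_2^{p,q}=0$ for all $p,q$, so $\HL^{n}(\lf\vert\lf_\lie,M)=0$ for every $n$. The long exact sequence of Proposition~\ref{les} then sandwiches each $\HL^{n}(\lf,M)$ with $n\ge 1$ between terms $\HL^{n}(\lf_\lie,M)$ and $\HL^{n-1}(\lf\vert\lf_\lie,M)$ that vanish, while $\HL^{0}(\lf,M)=M^{\lf}=0$ is already part of the hypothesis. This forces $\HL^{n}(\lf,M)=0$ for every $n\ge 0$.

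The principal obstacle is ensuring that both coefficient modules entering the argument, namely $M$ itself and the dual $\leib(\lf)^{*}$ appearing in the spectral sequence, have trivial $\lf_\lie$-invariants. For $M$ this is immediate once Lemma~\ref{sym} is invoked, but for $\leib(\lf)^{*}$ it rests on Lemma~\ref{leibnizinv}, whose proof in turn uses Levi's theorem for Leibniz algebras and the identity $\leib(\lf)=\ssf\leib(\lf)$ for a Levi subalgebra $\ssf$. The second subtlety is the need for the full-strength Whitehead vanishing in all cohomological degrees (via the Casimir element) rather than only the first and second Whitehead lemmas, which is precisely what enables the conclusion to hold in every degree rather than merely in degrees one and two.
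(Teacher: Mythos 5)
Your proof is correct and follows essentially the same route as the paper: Lemma~\ref{sym} to reduce to the symmetric case, the spectral sequence of Corollary~\ref{pirashvili} for $0\to\leib(\lf)\to\lf\to\lf_\lie\to 0$ killed via Lemma~\ref{leibnizinv}, and the long exact sequence of Proposition~\ref{les}. The only difference is that where the paper invokes the Ntolo--Pirashvili vanishing theorem for $\HL^\bullet(\lf_\lie,-)$, you derive that vanishing from the classical Whitehead/Casimir argument combined with Theorem~\ref{vanlie} --- a substitution the paper itself notes is possible in the remark immediately following its proof.
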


\begin{proof}
According to Lemma \ref{sym}, the hypothesis $M^\lf=0$ implies that $M$ is symmetric. 
We can therefore use the spectral sequence of Corollary \ref{pirashvili} with $\If:=\leib
(\lf)$ and $\qf:=\lf_\lie$. The $E_2$-term reads $$E_2^{p,q}=\HL^p(\qf,(\If^*)_s)
\otimes\HL^q(\lf,M)\,.$$ It follows from \cite[Proposition 7.8]{F} and the Ntolo-Pirashvili
vanishing theorem for the Leibniz cohomology of a finite-dimensional semi-simple Lie algebra
over a field of characteristic zero (see \cite[Th\'eor\`eme 2.6]{N} and the sentence after
the proof of Lemma 2.2 in \cite{P}) that $\HL^p(\qf,(\If^*)_s)=0$ for every positive
integer $p$. Hence the spectral sequence collapses, and we deduce $$\HL^n(\lf|\qf,M)
=(\If^*)_s^\qf\otimes\HL^n(\lf,M)\,.$$ By virtue of Lemma \ref{leibnizinv}, the relative
cohomology $\HL^n(\lf\vert\qf,M)$ vanishes for every non-negative integer $n$, and
thus we obtain from Proposition \ref{les} in conjunction with \cite[Proposition 4.1]{F}
and the Ntolo-Pirashvili vanishing theorem that $\HL^n(\lf,M)\cong\HL^n(\qf,M)=0$
for every non-negative integer $n$.
\end{proof}

\noindent {\bf Remark.} It is possible to prove Theorem \ref{whitehead} without using
the Ntolo-Pirashvili vanishing theorem. Namely, the first time the Ntolo-Pirashvili vanishing
theorem is used in the above proof, one can instead use Lemma \ref{leibnizinv}, Whitehead's
classical vanishing theorem, and Theorem \ref{vanlie}, and the second time, by hypothesis,
it is enough to apply just the last two results. As a consequence, the proof of
Theorem~\ref{vansemsim} gives also a new proof of the Ntolo-Pirashvili vanishing theorem.
\vspace{.3cm}

Next, we generalize the Ntolo-Pirashvili vanishing theorem from Lie algebras to arbitrary
Leibniz algebras. The main tools in the proof are Corollary \ref{irr}, Theorem~\ref{whitehead},
Corollary \ref{coadj}, and Lemma \ref{antisym}, where the second result and its use in this
proof seems to be new.

\begin{thm}\label{vansemsim}
Let $\lf$ be a finite-dimensional semi-simple left Leibniz algebra over a field of characteristic
zero, and let $M$ be a finite-dimensional $\lf$-bimodule. Then $\HL^n(\lf,M)=0$ for
every integer $n\ge 2$, and there is a five-term exact sequence $$0\to M_0\to\HL^0(\lf,M)
\to M_\sym^{\lf_\lie}\to\Hom_\lf(\lf_{\ad,\ell},M_0)\to\HL^1(\lf,M)\to 0\,.$$ Moreover, if $M$
is symmetric, then $\HL^n(\lf,M)=0$ for every integer $n\ge 1$.
\end{thm}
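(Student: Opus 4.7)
The strategy is to use the short exact sequence $0\to M_0\to M\to M_\sym\to 0$ together with its long exact Leibniz cohomology sequence, reducing the problem to computing $\HL^n(\lf,M_\sym)$ and $\HL^n(\lf,M_0)$ separately.

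For the symmetric piece $M_\sym$: since $\lf_\lie$ is a finite-dimensional semi-simple Lie algebra by \cite[Proposition 7.8]{F}, Weyl's theorem on complete reducibility yields a decomposition $M_\sym=M_\sym^{\lf_\lie}\oplus N$ of $\lf_\lie$-modules with $N^{\lf_\lie}=0$. Because $M_\sym$ is symmetric, the left and right invariants coincide, so $N^\lf=0$ and Theorem \ref{whitehead} gives $\HL^n(\lf,N)=0$ for every $n\ge 0$. The remaining summand $M_\sym^{\lf_\lie}$ is a direct sum of copies of the trivial symmetric bimodule $\F$, and Corollary \ref{coadj} reduces the computation of $\HL^n(\lf,\F)$ for $n\ge 1$ to that of $\HL^{n-1}(\lf,(\lf^*)_s)$. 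The key verification is $(\lf^*)^\lf=0$: by Levi's theorem for Leibniz algebras one has $\lf=\ssf\oplus\leib(\lf)$ as $\ssf$-modules, hence $\lf^*\cong\ssf^*\oplus\leib(\lf)^*$; the first summand has no $\ssf$-invariants by Whitehead's classical lemma for Lie algebras, and the second has none by Lemma \ref{leibnizinv}. Theorem \ref{whitehead} then yields $\HL^k(\lf,(\lf^*)_s)=0$ for every $k\ge 0$, and hence $\HL^n(\lf,\F)=0$ for $n\ge 1$. In summary, $\HL^0(\lf,M_\sym)=M_\sym^{\lf_\lie}$ and $\HL^n(\lf,M_\sym)=0$ for every $n\ge 1$.

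For the anti-symmetric piece $M_0$, Lemma \ref{antisym}(b) furnishes $\HL^0(\lf,M_0)=M_0$ together with the degree shift $\HL^n(\lf,M_0)\cong\HL^{n-1}(\lf,\Hom_\F(\lf,M_0)_s)$ for every $n\ge 1$. Applying the symmetric analysis above to the symmetric bimodule $\Hom_\F(\lf,M_0)_s$, whose $\lf_\lie$-invariant subspace is precisely $\Hom_\lf(\lf_{\ad,\ell},M_0)$ (carrying trivial bimodule structure), one obtains $\HL^1(\lf,M_0)=\Hom_\lf(\lf_{\ad,\ell},M_0)$ and $\HL^n(\lf,M_0)=0$ for every $n\ge 2$. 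Inserting these values into the long exact cohomology sequence attached to $0\to M_0\to M\to M_\sym\to 0$ produces $\HL^n(\lf,M)=0$ for $n\ge 2$ and exactly the claimed five-term exact sequence; the symmetric-coefficients statement then follows immediately by setting $M_0=0$. The main obstacle of the argument lies in the prerequisite verification that $(\lf^*)^\lf=0$, which is what makes the trivial one-dimensional symmetric bimodule, and hence each trivial Weyl summand, cohomologically invisible beyond degree zero; everything else is a bookkeeping assembly of earlier results.
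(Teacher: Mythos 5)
Your proposal is correct and follows essentially the same route as the paper: reduce via $0\to M_0\to M\to M_\sym\to 0$, kill the symmetric part with Theorem \ref{whitehead} plus Corollary \ref{coadj} for the trivial summand, and shift degrees on the anti-symmetric part with Lemma \ref{antisym}(b). The only (harmless) differences are that the paper treats $M_\sym$ by a composition series and Corollary \ref{irr} where you invoke Weyl's theorem directly, and it verifies $(\lf^*)_s^\lf=0$ via $\HL^1(\lf,\F)\cong\HCE^1(\lf_\lie,\F)=0$ and perfectness of $\lf_\lie$ rather than via the Levi decomposition and Lemma \ref{leibnizinv}.
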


\begin{proof}
The proof is divided into three steps. First, we will prove the assertion for symmetric
$\lf$-bimodules. So suppose that $M$ is symmetric. Since $M$ is finite-dimensional, it
has a composition series. It is clear that sub-bimodules and homomorphic images of a
symmetric bimodule are again symmetric. By using the long exact cohomology sequence,
it is therefore enough to prove the second part of the theorem for finite-dimensional
irreducible symmetric $\lf$-bimodules. So suppose now in addition that $M$ is irreducible
and non-trivial. Then we obtain from Corollary \ref{irr} that $M^\lf=0$, and thus
Theorem \ref{whitehead} yields that $\HL^n(\lf,M)=0$ for every non-negative integer
$n$. Finally, suppose that $M=\F$ is the trivial irreducible $\lf$-bimodule. In this case
it follows from Corollary \ref{coadj} that $\HL^n(\lf,\F)\cong\HL^{n-1}(\lf,(\lf^*)_s)$
for every integer $n\ge 1$. Since $\lf_\lie$ is perfect, we obtain from Corollary \ref{coadj}
that $$(\lf^*)_s^\lf\cong\HL^0(\lf,(\lf^*)_s)\cong\HL^1(\lf,\F)\cong\HCE^1(\lf_\lie,
\F)=0\,.$$ Therefore another application of Theorem \ref{whitehead} yields that
$$\HL^n(\lf,\F)\cong\HL^{n-1}(\lf,(\lf^*)_s)=0$$ for every integer $n\ge 1$. This
finishes the proof for symmetric $\lf$-bimodules.

If $M$ is anti-symmetric, then we obtain the assertion from Lemma \ref{antisym}\,(b)
and the statement for symmetric bimodules. Finally, if $M$ is arbitrary, then in the short
exact sequence $0\to M_0\to M\to M_\sym\to 0$ the first term is anti-symmetric and the
third term is symmetric. Hence another application of the long exact cohomology sequence
in conjunction with the statement for the anti-symmetric and the symmetric case yields
$\HL^n(\lf,M)=0$ for every integer $n\ge 2$. Now we deduce the five-term exact
sequence from the long exact cohomology sequence together with \cite[Corollary 4.2]{F},
\cite[Corollary 4.4\,(b)]{F}, and the symmetric case.
\end{proof}

Note that Theorem \ref{vansemsim} contains \cite[Theorem 7.15]{F} as the special
case $n=1$ and the second Whitehead lemma for Leibniz algebras as the special case
$n=2$. But contrary to Chevalley-Eilenberg cohomology, Leibniz cohomology vanishes
in any degree $n\ge 2$.

The following example shows that the Ntolo-Pirashvili vanishing theorem (and therefore
also Theorem \ref{vansemsim}) does not hold over fields of prime characteristic.
\vspace{.3cm}

\noindent {\bf Example E.} Let $\gf:=\slf_2(\F)$ be the three-dimensional simple Lie algebra of
traceless $2\times 2$ matrices over a field $\F$ of characteristic $p>2$. Moreover, let $\F_p$
denote the field with $p$ elements, and let $L(n)$ ($n\in\F_p$) denote the irreducible restricted
$\gf$-module of heighest weight $n$. (If the ground field $\F$ is algebraically closed, these
modules represent all isomorphism classes of restricted irreducible $\gf$-modules.) It is well
known (see \cite[Theorem 4]{Dz}) that $\HCE^1(\gf,L(p-2))\cong\F^2\cong\HCE^2(\gf,L(p-2))$.
(Note that by virtue of \cite[Theorem 2]{Dz}, $\HCE^\bullet(\gf,M)=0$ for every non-restricted
irreducible $\gf$-module. In fact, $L(p-2)$ is the only irreducible $\gf$-module $M$ such that
$\HCE^1(\gf,M)\ne 0$ or $\HCE^2(\gf,M)\ne 0$.)

We obtain from Proposition \ref{lesrelcoh} that $$\HL^1(\gf,L(p-2)_s)\cong\HCE^1(\gf,L(p-2))
\cong\F^2\ne 0$$ and $$0\ne\F^2\cong\HCE^2(\gf,L(p-2))\hookrightarrow\HL^2(\gf,L(p-2)_s)\,.$$
In particular, this shows that the Ntolo-Pirashvili vanishing theorem (and therefore also Theorem
\ref{vansemsim}) is not true over fields of prime characteristic.
\vspace{.3cm}

\noindent {\bf Remark.} By using more sophisticated tools one can also say something about
the Leibniz cohomology of anti-symmetric irreducible $\gf$-bimodules, where again $\gf:=
\slf_2(\F)$. We obtain from Lemma \ref{antisym}\,(b) that $$\HL^1(\gf,L(n)_a)\cong\HL^0
(\gf,\Hom_\F(\gf,L(n))_s)\cong\Hom_\F(\gf,L(n))^\gf$$ and $$\HL^2(\gf,L(n)_a)\cong\HL^1
(\gf,\Hom_\F(\gf,L(n))_s)\cong\HCE^1(\gf,\Hom_\F(\gf,L(n)))\,.$$ Since $\gf\cong L(2)$ is
a self-dual $\gf$-module, we have the following isomorphisms of $\gf$-modules: $$\Hom_\F
(\gf,L(n))\cong L(2)\otimes L(n)\,.$$ Let us first consider the case $p>3$. Then we obtain
from the modular Clebsch-Gordan rule (see \cite[Theorem 1.11\,(a)]{BO} or Satz a) in Chapter
5 of \cite{Fi}) that $$L(2)\otimes L(2)\cong L(4)\oplus L(2)\oplus L(0)$$ and 
\begin{eqnarray*}
L(2)\otimes L(p-4)\cong
\left\{
\begin{array}{cl}
L(3)\oplus L(1) & \mbox{\rm if }p=5\\ 
L(p-2)\oplus L(p-4)\oplus L(p-6) & \mbox{\rm if }p\ge 7\,.
\end{array}
\right.
\end{eqnarray*}
Hence we conclude for $p>3$ that $$\HL^1(\gf,L(2)_a)\cong(L(2)\otimes L(2))^\gf\cong L(0)^\gf
\cong\F\ne 0$$ and $$\HL^2(\gf,L(p-4)_a)\cong\HCE^1(\gf,L(2)\otimes L(p-4))\cong\HCE^1(\gf,
L(p-2))\cong\F^2\ne 0\,.$$

Let us now consider $p=3$. Note that in this case $L(2)$ is the {\em Steinberg module\/}, i.e.,
$L(2)$ is the unique projective irreducible restricted $\gf$-module. This implies that $L(2)\otimes
L(n)$ is also projective for every highest weight $n\in\F_3$. Then we obtain from the modular
Clebsch-Gordan rule (cf.\ \cite[Theorem 1.11\,(b) and (c)]{BO} or Satz b) and c) in Chapter 5
of \cite{Fi}) for $p=3$ that
\begin{eqnarray*}
L(2)\otimes L(n)\cong
\left\{
\begin{array}{cl}
L(2) & \mbox{\rm if }n\equiv 0\,\,(\mathrm{mod}\,3)\\
P(1) & \mbox{\rm if }n\equiv 1\,\,(\mathrm{mod}\,3)\,,\\ 
P(0)\oplus L(2) & \mbox{\rm if }n\equiv 2\,\,(\mathrm{mod}\,3)
\end{array}
\right.
\end{eqnarray*}
where $P(n)$ denotes the projective cover (and at the same time also the injective hull) of $L(n)$.
As a consequence, we have that
\begin{eqnarray*}
(L(2)\otimes L(n))^\gf\cong
\left\{
\begin{array}{cl}
\F & \mbox{\rm if }n\equiv 2\,\,(\mathrm{mod}\,3)\\
0 & \mbox{\rm if }n\not\equiv 2\,\,(\mathrm{mod}\,3)\,.
\end{array}
\right.
\end{eqnarray*}
Therefore, we obtain that $$\HL^1(\gf,L(2)_a)\cong(L(2)\otimes L(2))^\gf\cong\F\ne 0\,.$$
Moreover, by using the six-tem exact sequence relating Hochschild's cohomology of a
restricted Lie algebra to its Chevalley-Eilenberg cohomology (see \cite[p.\ 575]{Ho}),
we also conclude that $$\HL^2(\gf,L(2)_a)\cong\HCE^1(\gf,L(2)\otimes L(2))\cong
\gf^*\cong\F^3\ne 0\,.$$
\vspace{-.3cm}

The next example shows that the Ntolo-Pirashvili vanishing theorem (and therefore also
Theorem \ref{vansemsim}) does not hold for infinite-dimensional modules.
\vspace{.3cm}

\noindent {\bf Example F.} Let $\gf:=\slf_2(\C)$ be the three-dimensional simple complex Lie algebra
of traceless $2\times 2$ matrices, and let $V(\lambda)$ ($\lambda\in\C$) denote the {\em Verma
module\/} of highest weight $\lambda$. (Here we identify every complex multiple of the unique
fundamental weight with its coefficient.) Verma modules are infinite-dimensional indecomposable
$\gf$-modules (see, for example, \cite[Theorem 20.2\,(e)]{H}). Furthermore, it is well known (see
\cite[Exercise 7\,(b) \& (c) in Section 7.2]{H}) that $V(\lambda)$ is irreducible if, and only if, $\lambda$
is not a dominant integral weight (i.e., with our identification, $\lambda$ is not a non-negative
integer). Moreover, it follows from \cite[Theorem 4.19]{W} that
\begin{eqnarray*}
\HCE^n(\gf,V(\lambda))\cong
\left\{
\begin{array}{cl}
\C & \mbox{\rm if }\lambda=-2\mbox{ and }n=1,2\\ 
0 & \hspace{1cm}\mbox{\rm otherwise}\,.
\end{array}
\right.
\end{eqnarray*}
This in conjunction with Proposition \ref{lesrelcoh} yields that $$\HL^1(\gf,V(-2)_s)\cong
\HCE^1(\gf,V(-2))\cong\C\ne 0$$ and $$0\ne\C\cong\HCE^2(\gf,V(-2))\hookrightarrow
\HL^2(\gf,V(-2)_s)\,.$$ In particular, the Ntolo-Pirashvili vanishing theorem (and therefore
also Theorem~\ref{vansemsim}) is not true for infinite-dimensional modules.
\vspace{.3cm}

We obtain as an immediate consequence of Theorem \ref{vansemsim} the following generalization
of \cite[Corollary 7.9]{F}.

\begin{cor}\label{triv}
If $\lf$ is a finite-dimensional semi-simple left Leibniz algebra over a field of characteristic zero,
then $\HL^n(\lf,\F)=0$ for every integer $n\ge 1$.
\end{cor}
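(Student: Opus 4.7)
The plan is to observe that the trivial one-dimensional bimodule $\F$ falls into the symmetric case of Theorem \ref{vansemsim}, so that the desired vanishing is already contained in that theorem. Concretely, with the zero left and right actions one has $m\cdot x=0=-x\cdot m$ for every $x\in\lf$ and every $m\in\F$, so $\F$ is a symmetric $\lf$-bimodule (in fact simultaneously anti-symmetric, but symmetry is what we need here). Since $\lf$ is finite-dimensional and hence so is $\F$, the hypotheses of the second (``symmetric'') clause of Theorem \ref{vansemsim} are satisfied, and we immediately conclude that $\HL^n(\lf,\F)=0$ for every integer $n\ge 1$.

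There is no real obstacle here: once Theorem \ref{vansemsim} is available, the corollary is a one-line specialization. The only thing worth checking is that $\F$ really is treated by the symmetric branch rather than the generic one (which would only give vanishing from degree $2$ onward); this is the point of the remark above. Alternatively, one could run the proof by hand: the five-term exact sequence from Theorem \ref{vansemsim} for $M:=\F$ reduces to $0\to 0\to\HL^0(\lf,\F)\to\F^{\lf_\lie}\to\Hom_\lf(\lf_{\ad,\ell},0)\to\HL^1(\lf,\F)\to 0$, which already forces $\HL^1(\lf,\F)=0$, and the higher degrees are handled by the general vanishing $\HL^n(\lf,M)=0$ for $n\ge 2$. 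Either way, the statement follows at once.
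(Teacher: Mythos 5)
Your proof is correct and is exactly the paper's argument: the paper gives no separate proof, presenting the corollary as an immediate consequence of Theorem \ref{vansemsim}, and your observation that the trivial bimodule $\F$ is symmetric (so the stronger vanishing from degree $1$ applies) is precisely the intended specialization. Your sanity check via the five-term sequence is also accurate, since $M_0=0$ for the trivial bimodule.
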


\noindent {\bf Remark.} It is well known that the analogue of Corollary \ref{triv} does not
hold for the Chevalley-Eilenberg cohomology of Lie algebras as $\HCE^3(\gf,\F)\ne 0$ for
any finite-dimensional semi-simple Lie algebra $\gf$ over a field $\F$ of characteristic zero
(see \cite[Theorem 21.1]{CE}).
\vspace{.3cm}

Next, we apply Theorem \ref{vansemsim} to compute the cohomology of a finite-dimensional
semi-simple left Leibniz algebra over a field of characteristic zero with coefficients in its
adjoint bimodule and in its (anti-)symmetric counterparts.

\begin{thm}\label{adj}
For every finite-dimensional semi-simple left Leibniz algebra $\lf$ over a field of characteristic
zero the following statements hold:
\begin{enumerate}
\item[{\rm (a)}]
\begin{eqnarray*}
\HL^n(\lf,\lf_s)=
\left\{
\begin{array}{cl}
\leib(\lf) & \mbox{\rm if }n=0\\ 
0 & \mbox{\rm if }n\ge 1\,.
\end{array}
\right.
\end{eqnarray*}
\item[{\rm (b)}]
\begin{eqnarray*}
\HL^n(\lf,\lf_a)=
\left\{
\begin{array}{cl}
\lf & \mbox{\rm if }n=0\\
\End_\lf(\lf_{\ad,\ell}) & \mbox{\rm if }n=1\,,\\ 
0 & \mbox{\rm if }n\ge 2
\end{array}
\right.
\end{eqnarray*}
where $\End_\lf(\lf_{\ad,\ell})$ denotes the vector space of endomorphisms of the left adjoint
$\lf$-module $\lf_{\ad,\ell}$.
\item[{\rm (c)}]
\begin{eqnarray*}
\HL^n(\lf,\lf_\ad)=
\left\{
\begin{array}{cl}
\leib(\lf) & \mbox{\rm if }n=0\\
\Hom_\lf(\lf_{\ad,\ell},\leib(\lf)) & \mbox{\rm if }n=1\,,\\ 
0 & \mbox{\rm if }n\ge 2
\end{array}
\right.
\end{eqnarray*}
where $\Hom_\lf(\lf_{\ad,\ell},\leib(\lf))$ denotes the vector space of homomorphisms from
the left adjoint $\lf$-module $\lf_{\ad,\ell}$ to the Leibniz kernel $\leib(\lf)$ considered as a
left $\lf$-module.
\end{enumerate}
\end{thm}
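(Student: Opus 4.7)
The plan is to derive all three parts from Theorem \ref{vansemsim} and Lemma \ref{antisym}(b). In each case the vanishing $\HL^n=0$ in high degrees is immediate from Theorem \ref{vansemsim}, so the content lies in computing $\HL^0$ and (for (b) and (c)) $\HL^1$.

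For part (a), $\lf_s$ is symmetric, so the symmetric case of Theorem \ref{vansemsim} yields $\HL^n(\lf,\lf_s)=0$ for every $n\geq 1$. The five-term exact sequence of Theorem \ref{vansemsim}, specialized to $M_0=0$, collapses to an isomorphism $\HL^0(\lf,\lf_s)\cong(\lf_s)^{\lf_\lie}$; a direct computation using the Levi decomposition $\lf=\ssf\oplus\leib(\lf)$ (\cite[Corollary 2.14]{FM} or \cite[Proposition 2.4]{P}) together with the inclusion $\leib(\lf)\subseteq C_\ell(\lf)$ identifies this invariant space with $\leib(\lf)$.

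For part (b), $\lf_a$ is anti-symmetric, so Lemma \ref{antisym}(b) immediately gives $\HL^0(\lf,\lf_a)=\lf$ and, for every integer $n\geq 1$,
$$
\HL^n(\lf,\lf_a)\cong\HL^{n-1}(\lf,\Hom_\F(\lf,\lf)_s).
$$
The right-hand side involves a symmetric bimodule, so Theorem \ref{vansemsim} kills it whenever $n-1\geq 1$, giving the vanishing for $n\geq 2$. For $n=1$ the right-hand side is the space of $\lf$-invariants in $\Hom_\F(\lf,\lf)$; unpacking the left $\lf$-module structure $(x\cdot f)(y)=x\cdot f(y)-f(xy)$ from Lemma \ref{antisym}(b) identifies these invariants with the left $\lf$-module endomorphisms of $\lf_{\ad,\ell}$, i.e., with $\End_\lf(\lf_{\ad,\ell})$.

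For part (c), I apply the full five-term exact sequence of Theorem \ref{vansemsim} to $M:=\lf_\ad$. The pivotal algebraic step is the identification $M_0=(\lf_\ad)_0=\leib(\lf)$ (as an anti-symmetric sub-bimodule of $\lf_\ad$). To see this, I invoke the Levi decomposition $\lf=\ssf\oplus\leib(\lf)$ together with the relations $w_1 w_2=0$ and $ws=0$ (for $w,w_i\in\leib(\lf)$ and $s\in\ssf$) to expand the generators $xy+yx$ of $M_0$ as elements of $\ssf\leib(\lf)$; the equality $\ssf\leib(\lf)=\leib(\lf)$ established in the proof of Lemma \ref{leibnizinv} then gives $M_0=\leib(\lf)$. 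Consequently $M_\sym\cong\lf/\leib(\lf)\cong\ssf$ as a symmetric $\lf$-bimodule with action factoring through $\lf_\lie$, and since $\ssf$ has trivial center, $M_\sym^{\lf_\lie}=\ssf^\ssf=0$. The five-term exact sequence therefore collapses to
$$
0\to\leib(\lf)\to\HL^0(\lf,\lf_\ad)\to 0\to\Hom_\lf(\lf_{\ad,\ell},\leib(\lf))\to\HL^1(\lf,\lf_\ad)\to 0,
$$
yielding the claimed expressions for $\HL^0$ and $\HL^1$. The vanishing $\HL^n(\lf,\lf_\ad)=0$ for $n\geq 2$ is immediate from Theorem \ref{vansemsim}. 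The main technical obstacle is the identification $(\lf_\ad)_0=\leib(\lf)$ in part (c); once this is in hand, everything else reduces to a mechanical application of the five-term exact sequence and the isomorphisms in Lemma \ref{antisym}(b).
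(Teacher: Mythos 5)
Your proposal is correct and follows the same skeleton as the paper's proof: all vanishing in degrees $\ge 2$ (resp.\ $\ge 1$ in (a)) comes from Theorem \ref{vansemsim}, and the real content sits in degrees $0$ and $1$. The difference is how those low degrees are obtained. The paper simply quotes \cite{F} for them: $\HL^0(\lf,\lf_s)=(\lf_s)^\lf=C_\ell(\lf)=\leib(\lf)$ and $\HL^0(\lf,\lf_\ad)=(\lf_\ad)^\lf=C_\ell(\lf)=\leib(\lf)$ via \cite[Propositions 4.1 and 7.5]{F}, and $\HL^0(\lf,\lf_a)=\lf$, $\HL^1(\lf,\lf_a)=\End_\lf(\lf_{\ad,\ell})$ via \cite[Corollaries 4.2\,(b) and 4.4\,(b)]{F}; only $\HL^1(\lf,\lf_\ad)$ is extracted from the five-term exact sequence, exactly as you do. You instead re-derive the low-degree identifications internally: in (a) from the five-term sequence (legitimate, since $M_0=0$ for a symmetric bimodule), in (b) from Lemma \ref{antisym}\,(b) with $n=1$ (which the paper states is precisely the generalization of \cite[Corollary 4.4\,(b)]{F}, so this is the same computation), and in (c) by making explicit the identifications $(\lf_\ad)_0=\leib(\lf)$ and $(\lf_\ad)_\sym\cong\lf_\lie$ that the paper uses tacitly when it writes the third and fourth terms of the sequence as $\lf_\lie^{\lf_\lie}$ and $\Hom_\lf(\lf_{\ad,\ell},\leib(\lf))$. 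Two minor remarks, neither of which is a gap: the identification $(\lf_\ad)_0=\leib(\lf)$ does not require the Levi decomposition, since in characteristic zero the polarization identity $xy+yx=(x+y)^2-x^2-y^2$ gives $(\lf_\ad)_0\subseteq\leib(\lf)$ and $x^2=\frac{1}{2}(x\cdot x+x\cdot x)$ gives the reverse inclusion; and in (a) the final identification of the invariant space with $\leib(\lf)$ is asserted rather than carried out, the cleanest justification being the one the paper invokes, namely $C_\ell(\lf)=\leib(\lf)$ for semi-simple $\lf$ from \cite[Proposition 7.5]{F}, which is exactly what your Levi-decomposition computation reproduces.
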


\begin{proof}
(a): According to \cite[Proposition 4.1]{F} and \cite[Proposition 7.5]{F} we have that that
$\HL^0(\lf,\lf_s)=(\lf_s)^\lf=C_\ell(\lf)=\leib(\lf)$. Moreover, we obtain the statement for
degree $n\ge 1$ from the second part of Theorem \ref{vansemsim}.

(b): It follows from \cite[Corollary 4.2\,(b)]{F} that $\HL^0(\lf,\lf_a)=\lf$, and it follows from
\cite[Corollary 4.4\,(b)]{F} that $\HL^1(\lf,\lf_a)=\End_\lf(\lf_{\ad,\ell})$. The remainder of
the assertion is an immediate consequence of the first part of Theorem \ref{vansemsim}.

(c): As for the symmetric adjoint bimodule, we obtain from \cite[Proposition 4.1]{F} and
\cite[Proposition 7.5]{F} that $\HL^0(\lf,\lf_\ad)=(\lf_\ad)^\lf=C_\ell(\lf)=\leib(\lf)$.
Next, by applying the five-term exact sequence of Theorem \ref{vansemsim} to the adjoint
$\lf$-bimodule $M:=\lf_\ad$, we deduce that $$\HL^1(\lf,\lf_\ad)\cong\Hom_\lf(\lf_{\ad,
\ell},\leib(\lf))\,,$$ as the third term is $\lf_\lie^{\lf_\lie}=C(\lf_\lie)=0$. Finally, the
assertion for degree $n\ge 2$ is again an immediate consequence of the first part of
Theorem \ref{vansemsim}.
\end{proof}

\noindent {\bf Remark.} Note that the vanishing part of Theorem \ref{adj}\,(c) confirms
a generalization of the conjecture at the end of \cite{ALO}. Moreover, parts (a) and (b)
of Theorem \ref{adj} show that the statements in Theorem \ref{vansemsim} are best
possible.
\vspace{.3cm}

In particular, one can derive from Theorem \ref{adj}\,(c) that finite-dimensional semi-simple
non-Lie Leibniz algebras over a field of characteristic zero have outer derivations. In this
respect non-Lie Leibniz algebras behave differently than Lie algebras (see, for example,
\cite[Theorem 5.3]{H}).

\begin{cor}\label{outder}
Every finite-dimensional semi-simple non-Lie Leibniz algebra over a field of characteristic zero
has derivations that are not inner.
\end{cor}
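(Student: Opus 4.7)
The plan is to deduce the corollary directly from Theorem~\ref{adj}\,(c), which identifies
$$\HL^1(\lf,\lf_\ad)\cong\Hom_\lf(\lf_{\ad,\ell},\leib(\lf))\,.$$
Since the standard interpretation of the first Leibniz cohomology with adjoint coefficients gives $\HL^1(\lf,\lf_\ad)=\der(\lf)/\ider(\lf)$ (the $1$-cocycles for $\lf_\ad$ are exactly the derivations $D$ satisfying $D(xy)=D(x)y+xD(y)$, and the $1$-coboundaries $\dl^0 m(x)=-mx$ span the inner derivations $\ider(\lf)$), the corollary reduces to showing that $\Hom_\lf(\lf_{\ad,\ell},\leib(\lf))\ne 0$ whenever $\leib(\lf)\ne 0$, i.e., whenever $\lf$ is not a Lie algebra.

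To produce a nonzero $\lf$-module homomorphism $\lf_{\ad,\ell}\to\leib(\lf)$, I would invoke Levi's theorem for Leibniz algebras (as already used in the proof of Lemma~\ref{leibnizinv}) to obtain a semi-simple Lie subalgebra $\ssf\subseteq\lf$ with $\lf=\ssf\oplus\leib(\lf)$ as vector spaces. The key observation is that this vector space decomposition is in fact a decomposition of left $\lf$-modules: the inclusion $\leib(\lf)\subseteq C_\ell(\lf)$ recalled in Section~\ref{prelim} yields $\leib(\lf)\cdot\ssf=0\subseteq\ssf$, while $\ssf\cdot\ssf\subseteq\ssf$ because $\ssf$ is a subalgebra, so $\ssf$ is itself a left $\lf$-submodule of $\lf_{\ad,\ell}$ complementary to $\leib(\lf)$.

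Consequently the projection $\pi:\lf_{\ad,\ell}\to\leib(\lf)$ along $\ssf$ is a morphism of left $\lf$-modules. As $\lf$ is assumed to be non-Lie, $\leib(\lf)\ne 0$, and hence $\pi\ne 0$. Thus $\Hom_\lf(\lf_{\ad,\ell},\leib(\lf))\ne 0$, so $\HL^1(\lf,\lf_\ad)\ne 0$, and $\lf$ admits a derivation that is not inner.

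There is really no serious obstacle here; the corollary is essentially immediate once one has Theorem~\ref{adj}\,(c). The one subtle point worth highlighting is that the Levi decomposition $\lf=\ssf\oplus\leib(\lf)$ automatically refines to a decomposition of left $\lf$-modules because of the asymmetric nature of the left center condition $\leib(\lf)\subseteq C_\ell(\lf)$ characteristic of (left) Leibniz algebras; this is what supplies the canonical outer derivation $\pi$ and simultaneously explains why the analogous statement fails in the Lie setting (where $\leib(\lf)=0$ collapses the construction).
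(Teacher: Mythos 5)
Your argument is correct, and it reaches the non-vanishing of $\Hom_\lf(\lf_{\ad,\ell},\leib(\lf))$ by a genuinely different route than the paper. The paper applies $\Hom_\F(-,\leib(\lf))$ to the short exact sequence $0\to\leib(\lf)\to\lf_\ad\to\lf_\lie\to 0$ of left $\lf$-modules, passes to the long exact cohomology sequence, and uses Theorem~\ref{vansemsim} once more to kill $\HL^1(\lf,\Hom_\F(\lf_\lie,\leib(\lf))_s)$, so that the restriction map $\Hom_\lf(\lf_{\ad,\ell},\leib(\lf))\to\Hom_\lf(\leib(\lf),\leib(\lf))$ is surjective and hits $\id_{\leib(\lf)}\ne 0$. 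You instead observe that the Levi decomposition $\lf=\ssf\oplus\leib(\lf)$ is automatically a direct sum of left $\lf$-modules, because $\leib(\lf)\subseteq C_\ell(\lf)$ forces $a\cdot s=as=0$ for $a\in\leib(\lf)$, $s\in\ssf$, while $\ssf\cdot\ssf\subseteq\ssf$; the projection onto $\leib(\lf)$ is then a non-zero element of $\Hom_\lf(\lf_{\ad,\ell},\leib(\lf))$. Both arguments rest on Theorem~\ref{adj}\,(c), but yours is more elementary and more explicit at this last step (it exhibits a concrete non-zero homomorphism, indeed the projection is itself a derivation vanishing on $\ssf$ and acting as the identity on $\leib(\lf)$, rather than deducing existence from an exact sequence), at the cost of invoking Levi's theorem; the paper's version yields slightly more, namely that every $\lf$-endomorphism of $\leib(\lf)$ extends to $\lf_{\ad,\ell}$. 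One small caveat: your closing phrase about $\pi$ being ``the canonical outer derivation'' is justified only after noting that the identification $\HL^1(\lf,\lf_\ad)\cong\Hom_\lf(\lf_{\ad,\ell},\leib(\lf))$ of Theorem~\ref{adj}\,(c) is an isomorphism, so the non-zero element $\pi$ does correspond to a non-trivial cohomology class; this is available from the five-term exact sequence of Theorem~\ref{vansemsim}, so the conclusion stands.
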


\begin{proof}
If one applies the contravariant functor $\Hom_\F(-,\leib(\lf))$ to the short exact sequence
$$0\to\leib(\lf)\to\lf_\ad\to\lf_\lie\to 0$$ considered as a short exact sequence of left
$\lf$-modules, one obtains the short exact sequence $$0\to\Hom_\F(\lf_\lie,\leib(\lf))\to
\Hom_\F(\lf_{\ad,\ell},\leib(\lf))\to\Hom_\F(\leib(\lf),\leib(\lf))\to 0$$ of left $\lf$-modules.
Then the long exact cohomology sequence in conjunction with Lemma \ref{antisym}\,(a)
yields the long exact sequence
\begin{eqnarray*}
0 & \to & \Hom_\lf(\lf_\lie,\leib(\lf))\to\Hom_\lf(\lf_{\ad,\ell},\leib(\lf))\to\Hom_\lf(\leib(\lf),\leib(\lf))\\
& \to & \hl^1(\lf,\Hom_\F(\lf_\lie,\leib(\lf)))=\HL^1(\lf,\Hom_\F(\lf_\lie,\leib(\lf))_s)\,.
\end{eqnarray*}
According to the second part of Theorem \ref{vansemsim}, the fourth term is zero. Since the third
term contains the identity map, it is non-zero as by hypothesis $\lf$ is a not a Lie algebra. Hence in
this case the second term is non-zero, and we obtain from Theorem \ref{adj}\,(c) that $\HL^1(\lf,
\lf_\ad)\cong\Hom_\lf(\lf_{\ad,\ell},\leib(\lf))\ne 0$.
\end{proof}

\noindent {\bf Remark.} After the submission of our paper we became aware of the preprint
\cite{BMS} in which the authors introduce a more general concept of inner derivations for
Leibniz algebras than in our paper. Namely, a derivation $D$ of a left Leibniz algebra $\lf$ is
called {\em inner\/} if there exists an element $x\in\lf$ such that $\im(D-L_x)\subseteq
\leib(\lf)$. Then it is shown that every derivation of a finite-dimensional semi-simple Lie
algebra over a field of characteristic zero is inner in this more general sense (see
\cite[Theorem~3.3]{BMS}).
\vspace{-.1cm}

In the same way as at the end of Section \ref{cel} for the infinite-dimensional two-sided Witt
algebra, by using \cite[Th\'eor\`eme 3]{Ba} in conjunction with Theorem~\ref{adj}\,(c), one
obtains the rigidity of any finite-dimensional semi-simple Lie algebra as a Leibniz algebra.

\begin{cor}\label{rigid}
Every finite-dimensional semi-simple left Leibniz algebra over an algebraically closed field of
characteristic zero is rigid as a Leibniz algebra.
\end{cor}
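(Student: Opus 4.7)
The plan is to deduce rigidity from the vanishing of the second adjoint Leibniz cohomology, following the standard algebraic deformation-theoretic recipe. Concretely, I would invoke the Leibniz-algebra analogue of the Nijenhuis--Richardson rigidity criterion proved by Balavoine in \cite[Th\'eor\`eme~3]{Ba}: for a finite-dimensional Leibniz algebra $\lf$ over an algebraically closed field of characteristic zero, the vanishing $\HL^2(\lf,\lf_\ad)=0$ implies that the orbit $\GL_n(\F)\cdot\phi$ of the structure law $\phi$ of $\lf$ is open in the variety $L_n(\F)$ of Leibniz multiplication laws, i.e., $\lf$ is rigid.

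First I would invoke Theorem~\ref{adj}\,(c) with $n=2$ to conclude that $\HL^2(\lf,\lf_\ad)=0$ for every finite-dimensional semi-simple left Leibniz algebra $\lf$ over a field of characteristic zero. This requires no additional hypothesis on the ground field beyond characteristic zero. Then, specializing to an algebraically closed ground field so that the variety $L_n(\F)$ and the $\GL_n(\F)$-action are in the setting required by Balavoine's theorem, I would plug this vanishing into \cite[Th\'eor\`eme~3]{Ba} to obtain rigidity of $\lf$ as a Leibniz algebra. This is exactly the pattern already used in the excerpt just before Proposition~\ref{borel} to deduce rigidity of the two-sided Witt algebra and of parabolic subalgebras, so the argument is a direct parallel.

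The main (and essentially only) obstacle is the bookkeeping of making sure that Balavoine's criterion is stated for \emph{left} Leibniz algebras in the form we need. Since Balavoine works in the framework of operadic deformations which is symmetric under the left/right convention, and since the vanishing of $\HL^2(\lf,\lf_\ad)$ translates across conventions, there is nothing substantive to check beyond citing the result. Hence no further computation is required, and the corollary follows in one line from Theorem~\ref{adj}\,(c) together with \cite[Th\'eor\`eme~3]{Ba}.
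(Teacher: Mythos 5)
Your proposal is correct and coincides with the paper's own argument: the paper likewise deduces $\HL^2(\lf,\lf_\ad)=0$ from Theorem~\ref{adj}\,(c) and then invokes Balavoine's rigidity criterion \cite[Th\'eor\`eme~3]{Ba}, exactly as in the earlier applications to the Witt algebra and to parabolic subalgebras. No gap.
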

\vspace{.3cm}


\noindent {\bf Acknowledgments.} Most of this paper was
written during a sabbatical leave of the first author in the Fall semester 2018.
He is very grateful to the University of South Alabama for giving him this
opportunity.

The first author would also like to thank the Laboratoire de math\'ematiques
Jean Leray at the Universit\'e de Nantes for the hospitality and the financial
support in the framework of the program D\'efiMaths during his visit in August
and September 2018. Moreover, the first author wishes to thank Henning Krause
and the BIREP group at Bielefeld University for the hospitality and the financial
support during his visit in October and November 2018 when large portions of
the paper were written.

Both authors would like to thank Bakhrom Omirov for useful discussions. We are
also grateful to Teimuraz Pirashvili for spotting a mistake in a previous version of
the manuscript and to Geoffrey Powell for his help in understanding this mistake
as well as for several useful remarks that improved the present manuscript. Finally,
we would like to thank an anonymous referee for correcting a mistake in the submitted
version. 
\vspace{.2cm}




\begin{thebibliography}{99}


\bibitem{ALO}
J. Q. Adashev, M. Ladra, and B. A. Omirov:
The second cohomology group of simple Leibniz algebras,
{\it J. Algebra Appl.\/} {\bf 17} (2018), no.\ 12, 1850222, 13 pp.

\bibitem{Ba}
D. Balavoine:
D\'eformations et rigidit\'e g\'eom\'etrique des alg\`ebres de Leibniz,
{\it Comm. Algebra\/} {\bf 24} (1996), no.\ 3, 1017--1034.

\bibitem{B1}
D. W. Barnes:
On the cohomology of soluble Lie algebras,
{\it Math. Z.\/} {\bf 101} (1967), 343--349.

\bibitem{B2}
D. W. Barnes:
On Levi's theorem for Leibniz algebras,
{\it Bull. Austral. Math. Soc.\/} {\bf 86} (2012), no. 2, 184--185.

\bibitem{BO}
G. M. Benkart and J. M. Osborn:
Representations of rank one Lie algebras of characteristic $p$,
in: {\it Lie Algebras and Related Topics, New Brunswick, NJ, 1981},
Lecture Notes in Mathematics, vol. {\bf 933}, Springer, Berlin/Heidelberg/New York, 1982, pp. 1--37.

\bibitem{BMS}
K. Boyle, K. C. Misra, and E. Stitzinger:
Complete Leibniz algebras,
Preprint (6 pages) {\tt arXiv:2001.11979v1}, January 31, 2020.

\bibitem{COK}
L. M. Camacho, B. A. Omirov, and T. K. Kurbanbaev:
Leibniz algebras constructed by Witt algebras,
{\it Linear Multilinear Algebra\/} {\bf 67} (2019), no. 10, 2048--2064.

\bibitem{Car}
R. Carles:
Sur la structure des alg\`ebres de Lie rigides,
{\it Ann. Inst. Fourier {\rm (}Grenoble\/}) {\bf 34} (1984), no.\ 3, 65--82.

\bibitem{CE}
C. Chevalley and S. Eilenberg:
Cohomology theory of Lie groups and Lie algebras,
{\it Trans. Amer. Math. Soc.\/} {\bf 63} (1948), no. 1, 85--124.

\bibitem{C}
S. Covez:
{\it The local integration of Leibniz algebras\/},
Th\`ese de Doctorat, Universit\'e de Nantes, 2010.

\bibitem{Dz}
A. S. Dzhumadildaev:
On the cohomology of modular Lie algebras,
{\it Math. USSR Sbornik\/} {\bf 47} (1984), no. 1, 127--143.

\bibitem{ES}
J. Ecker and M. Schlichenmaier:
The vanishing of the low-dimensional cohomology of the Witt and the Virasoro algebra,
Preprint (26 pages) {\tt arXiv:1707.06106v2}, March 26, 2018.

\bibitem{Fa}
R. Farnsteiner:
Central extensions and invariant forms of graded Lie algebras,
{\it Algebras Groups Geom.\/} {\bf 3} (1986), no.\ 4, 431--455.

\bibitem{F}
J. Feldvoss:
Leibniz algebras as non-associative algebras,
in: {\it Nonassociative Mathematics and Its Applications, Denver, CO, 2017\/}
(eds. P. Vojt\v{e}chovsk\'y, M. R. Bremner, J. S. Carter, A. B. Evans, J. Huerta,
M. K. Kinyon, G. E. Moorhouse, J. D. H. Smith),
Contemp. Math., vol. {\bf 721}, Amer. Math. Soc., Providence, RI, 2019, pp. 115--149.
(This paper can also be obtained from the arXiv via {\tt arXiv:1802.07219}.)

\bibitem{FM}
A. Fialowski and \'E. Zs. Mih\'alka:
Representations of Leibniz algebras,
{\it Algebr. Represent. Theory\/} {\bf 18} (2015), no. 2, 477--490.

\bibitem{FMM}
A. Fialowski, L. Magnin, and A. Mandal: 
About Leibniz cohomology and deformations of Lie algebras,
{\it J. Algebra\/} {\bf 383} (2013), 63--77.

\bibitem{Fi}
G. Fischer:
{\it Darstellungstheorie des ersten Frobeniuskerns der\/} $\mathrm{SL}_2$,
Dissertation, Universit\"at Bielefeld, 1982.

\bibitem{Ho}
G. Hochschild:
Cohomology of restricted Lie algebras,
{\it Amer. J. Math.\/} {\bf 76} (1954), no. 3, 555--580.

\bibitem{HS}
G. Hochschild and J-P. Serre:
Cohomology of Lie algebras,
{\it Ann. Math.\/} (2) {\bf 57} (1953), no. 3, 591--603.

\bibitem{HPL}
N. Hu, Y. Pei, and D. Liu:
A cohomological characterization of Leibniz central extensions of Lie algebras,
{\it Proc. Amer. Math. Soc.\/} {\bf 136} (2008), no. 2, 437--447.

\bibitem{Hue}
J. Huebschmann:
Automorphisms of group extensions and differentials in the Lyndon-Hochschild-Serre spectral sequence,
{\it J. Algebra\/} {\bf 72} (1981), no. 2, 296--334

\bibitem{H}
J. E. Humphreys:
{\it Introduction to Lie algebras and representation theory\/} (Third printing, revised),
Graduate Texts in Mathematics, vol. {\bf 9},
Springer-Verlag, New York/Berlin, 1980.

\bibitem{LL2}
G. Leger and E. Luks:
Cohomology and weight systems for nilpotent Lie algebras,
{\it Bull. Amer. Math. Soc.\/} {\bf 80} (1974), no. 1, 77--80.

\bibitem{L}
J.-L. Loday:
Une version non commutative des alg\`ebres de Lie: les alg\`ebres de Leibniz,
{\it Enseign. Math.\/} (2) {\bf 39} (1993), no. 3-4, 269--293.

\bibitem{CH}
J.-L. Loday:
{\it Cyclic homology\/} (2nd edition),
Grundlehren der Mathematischen Wissen\-schaften, vol. {\bf 301},
Berlin/Heidelberg/New York, Springer, 1998.

\bibitem{LP}
J.-L. Loday and T. Pirashvili:
Universal enveloping algebras of Leibniz algebras and (co)ho\-mology,
{\it Math. Ann.\/} {\bf 296} (1993), no. 1, 139--158.

\bibitem{LP2}
J.-L. Loday and T. Pirashvili:
Leibniz representations of Lie algebras,
{\it J. Algebra\/} {\bf 181} (1996), no. 2, 414--425.

\bibitem{Lo}
J. M. Lodder:
Leibniz cohomology for differentiable manifolds,
{\it Ann. Inst. Fourier {\rm (}Grenoble\/}) 48 (1998), no. 1, 73--95.

\bibitem{M}
L. Magnin:
Cohomologie adjointe des alg\`ebres de Lie de Heisenberg,
{\it Comm. Algebra\/} {\bf 21} (1993), no. 6, 2101--2129. 

\bibitem{N}
P. Ntolo:
Homologie de Leibniz d'alg\`ebres de Lie semi-simples,
{\it C. R. Acad. Sci. Paris\/} S\'er. I {\bf 318} (1994), no. 8, 707--710.

\bibitem{P}
T. Pirashvili:
On Leibniz homology,
{\it Ann. Inst. Fourier {\rm (}Grenoble\/}) {\bf 44} (1994), no. 2, 401--411.

\bibitem{P1}
T. Pirashvili:
A counterexample to a proposition of Feldvoss-Wagemann and Burde-Wage\-mann,
Preprint (3 pages) {\tt arXiv:1908.11596}, August 30, 2019.

\bibitem{S}
M. Schlichenmaier:
An elementary proof of the vanishing of the second cohomology of the Witt
and Virasoro algebra with values in the adjoint module,
{\it Forum Math.\/} {\bf 26} (2014), no. 3, 913--929.

\bibitem{T}
A. K. Tolpygo:
Cohomologies of parabolic Lie algebras (in Russian),
{\it Mat. Zametki\/} {\bf 12} (1972), 251--255,
English transl. {\it Math. Notes\/} {\bf 12} (1972), 585--587.

\bibitem{HA}
C. A. Weibel: {\it An introduction to homological algebra\/},
Cambridge Studies in Advanced Mathematics, vol. {\bf 38},
Cambridge University Press, Cambridge, 1994. 

\bibitem{W}
F. L. Williams:
The cohomology of semisimple Lie algebras with coefficients in a Verma module,
{\it Trans. Amer. Math. Soc.\/} {\bf 240} (1978), 115--127.


\end{thebibliography}
\end{document}